\newtheorem{theorem}{Theorem}
\newtheorem*{theorem*}{Theorem}
\newtheorem{proposition}{Proposition}
\newtheorem{lemma}{Lemma}
\theoremstyle{remark}
\newtheorem{example}{Example}
\newcommand{\cx}{\mathbb{C}}
\newcommand{\disk}{\mathbb{D}}
\newcommand{\D}{\Omega}
\newcommand{\ep}{\varepsilon}
\newcommand{\Dc}{\overline{\Omega}}
\newcommand{\dbar}{\overline{\partial}}
\newcommand{\norm}[1]{\left\Vert#1\right\Vert}
\newcommand{\abs}[1]{\left\vert#1\right\vert}
\newcommand{\ipr}[1]{\left\langle #1 \right \rangle}
\newcommand{\rl}{{\mathbb{R}}}
\newcommand{\ol}{\overline}
\newcommand{\wt}{\widetilde}
\newcommand{\tmop}[1]{\ensuremath{\operatorname{#1}}}
\renewcommand{\Re}{\tmop{Re}}
\renewcommand{\Im}{\tmop{Im}}
\title{The restriction operator on  Bergman spaces}
\author{Debraj Chakrabarti}
\address[Debraj Chakrabarti]{Department of Mathematics, 
	Central Michigan University, Mt. Pleasant,  MI 48859,  USA}
\email{chakr2d@cmich.edu}
\author{S\"{o}nmez \c{S}ahuto\u{g}lu}
\address[S\"{o}nmez \c{S}ahuto\u{g}lu]{Department of Mathematics 
	and Statistics, University of Toledo, Toledo, OH 43606, USA} 
\email{sonmez.sahutoglu@utoledo.edu}
\thanks{Debraj Chakrabarti was partially supported by  grant from 
	the National Science Foundation] (\#1600371),  a collaboration grant from the 
	Simons Foundation (\# 316632) and also by an Early Career internal 
	grant from Central Michigan University.}
\subjclass[2010]{Primary 32A36; Secondary 47B35}
\keywords{Restriction operator, Toeplitz operator, Bergman space}
\begin{document}

\begin{abstract}
We study the restriction operator from the Bergman space of a domain in 
$\mathbb{C}^n$ to the Bergman space of a non-empty open subset of the 
domain. We relate the restriction operator to the Toeplitz operator on the 
Bergman space of the domain whose symbol is the characteristic function 
of the subset. Using the biholomorphic invariance of the  spectrum of the 
associated Toeplitz operator, we study the restriction operator from the 
Bergman space of the unit disc to the Bergman space of subdomains with 
large symmetry groups, such as horodiscs and subdomains bounded by 
hypercycles. Furthermore, we prove a sharp estimate of the norm of the 
restriction operator in case the domain and the subdomain are balls. We 
also study various operator theoretic properties of the restriction operator 
such as compactness and essential norm estimates.
\end{abstract}
\maketitle

 \section{Introduction}

 \subsection{The restriction operator} 
 Let $\Omega$ be a domain in $\cx^n$ and $A^2(\Omega)$ be {its}
 \textit{Bergman space}, the linear space of holomorphic functions which are 
 square integrable with respect to the Lebesgue measure. Recall that 
 $A^2(\Omega)$ is a closed subspace of $L^2(\Omega)$ and therefore 
 is a Hilbert space with the induced inner product. If $U\subset \Omega$ 
 is a nonempty open subset, one can define the \textit{restriction operator} 
 $R_U: A^2(\Omega)\to A^2(U)$ given by $R_Uf=f|_U$. Clearly, $R_U$ is 
 a bounded linear map between Hilbert spaces.  In some problems of 
 complex analysis, one may want to know further properties of the operator 
 $R_U$. For example, the non-compactness of $R_U$ in a special situation 
 plays a crucial role in Fu and Straube's seminal study of the compactness 
 of the $\dbar$-Neumann problem on convex domains (see \cite{FuStraube98}). 
  
 The goal of this paper is to study  the operator $R_U$ by establishing a relation 
 with the theory of \textit{Toeplitz operators} on Bergman spaces. Using this relation,
 given any domain $\Omega$ in $\cx^n$, and a non-empty open subset 
 $U\subset \Omega$, we can define a \textit{spectrum} of $U$ relative to 
 $\Omega$ in such a way that this spectrum is an invariant of the  complex 
 geometry of $\Omega$. More  precisely, if $\wt{\Omega}$ is a domain 
 biholomorphic to $\Omega$, and $f:\Omega\to \wt{\Omega}$ is a 
 biholomorphic mapping, then the spectrum of $U$ with respect to 
 $\Omega$ is identical to the spectrum of $f(U)$ with respect to 
$\wt{\Omega}$. This clearly assumes a special interest when 
$f:\Omega\to \Omega$ is a biholomorphic automorphism of $\Omega$, 
and it follows that two subsets of $\Omega$ which are biholomorphically 
congruent (i.e., there is an automorphism of $\Omega$ mapping one to 
the other) are also isospectral. As we will see, the spectrum encodes 
important geometric and function theoretic information about the way 
the set $U$ ``sits inside'' $\Omega$.

 The considerations of this paper extend easily to complex manifolds, 
 provided we use the intrinsic Bergman spaces of square-integrable 
 holomorphic $n$-forms. For ease of exposition we will confine ourselves 
 to domains in $\cx^n$. We will further assume that $A^2(\Omega)$ is 
non-trivial (that is, $A^2(\Omega)\neq \{0\}$), an assumption which is  
certainly satisfied when $\Omega$ is either bounded or biholomorphic 
to a bounded domain. 

\subsection{The associated Toeplitz operator}
 We introduce the  operator
\begin{equation}\label{eq-tu}
 T_U = R^*_U R_U: A^2(\Omega)\to A^2(\Omega),
\end{equation}
where $R^*_U: A^2(U)\to A^2(\Omega)$ is the Hilbert-space adjoint of $R_U$. 
The operator $T_U$ is clearly a positive self-adjoint operator on  $A^2(\Omega)$, 
and it is not difficult to show that it is the \textit{Toeplitz operator} on the 
Bergman space $A^2(\Omega)$  with symbol the characteristic function 
$\chi_U$ of the open subset $U$. That is, for $f\in A^2(\Omega)$:
\[ T_U f =P(\chi_U f),\]
where $P: L^2(\Omega)\to A^2(\Omega)$ is the Bergman projection 
(see Proposition~\ref{prop-t} below).  It is natural to try to determine 
the spectrum of the selfadjoint operator $T_U$, which is precisely the 
spectrum of $U$ relative to $\Omega$ referred to above. As noted above 
the  spectrum of $T_U$ is invariant under biholomorphisms of the ambient 
domain $\Omega$ (see Proposition \ref{prop-invariance} below).

\subsection{Main results} \label{sec-results}
Many properties of $R_U$ can be read off from the spectrum of $T_U$, 
and so a precise understanding of $T_U$ (and therefore $R_U$)
involves of a description of the spectral decomposition of $T_U$. When this 
cannot be done, one might want to  measure the ``size'' of $T_U$ relative to 
the identity operator on $A^2(\Omega)$. This gives us a way of measuring 
the ``function-theoretic size" of $U$ relative to $\Omega$. Among such 
measures are the norm $\norm{T_U}=\norm{R_U}^2$, the essential 
norm $\norm{T_U}_e$ and when $T_U$ is compact (i.e.
$\norm{T_U}_e=0$) the Schatten $p$-norms of $T_U$. Note 
that each of these measures is monotone in the set $U$, i.e., they 
increase with the set $U$.

It is also natural to consider for each open set $U\subset \Omega$ the 
\textit{complementary restriction} $R_{\Omega\setminus{\ol{U}}}$ 
from $A^2(\Omega)$ to the open set $\Omega\setminus \ol{U}$. 
In order to avoid pathological situations, we will make the assumption 
that the boundary $\partial U$ of $U$ has zero Lebesgue measure. 
The following result relates some of these notions.

 \begin{theorem} \label{thm-basic} 
{Let $\D$ be a domain in $\cx^n$ and $U$ be a non-empty open subset of $\D$ 
	such that $A^2(\Omega) $ is infinite dimensional and the  boundary 
	of $U$ has zero Lebesgue measure.} Then 
 \begin{enumerate}
 \item If $R_U$ is compact then $\norm{R_U}<1$ and 
 $\norm{R_{\Omega\setminus \ol{U}}}_e
 =\norm{R_{\Omega\setminus \ol{U}}}=1$.
 \item $\norm{R_U}<1$ if and only if the complementary restriction 
 	$R_{\Omega\setminus\ol{U}}$ has closed range.
 \end{enumerate}
 \end{theorem}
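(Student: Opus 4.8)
The plan is to reduce everything to a single operator identity and then do spectral bookkeeping. Write $V=\Omega\setminus\ol U$ and note that, directly from $T_U=R_U^*R_U$ (and its analogue $T_V=R_V^*R_V$), one has $\ipr{T_Uf,f}=\norm{R_Uf}^2=\int_U\abs{f}^2$ for every $f\in A^2(\Omega)$, and similarly for $V$. Since $\Omega$ is the disjoint union of $U$, $V$ and $\Omega\cap\partial U$, and the last set is Lebesgue-null by hypothesis,
\[
\ipr{(T_U+T_V)f,f}=\int_U\abs{f}^2+\int_V\abs{f}^2=\int_\Omega\abs{f}^2=\norm{f}^2 .
\]
As a bounded operator is determined by its quadratic form, this yields the key relation $T_U+T_V=I$. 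Because $0\le T_U\le I$, both spectra lie in $[0,1]$ and satisfy $\sigma(T_V)=\{1-\lambda:\lambda\in\sigma(T_U)\}$; moreover $\norm{R_U}^2=\norm{T_U}=\sup\sigma(T_U)$, and likewise for $V$.

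For part (1), assume $R_U$ is compact, so $T_U=R_U^*R_U$ is a compact positive operator. First I would rule out $\norm{R_U}=1$: if $\sup\sigma(T_U)=1$, then since the nonzero spectrum of a compact operator consists of eigenvalues, $1$ is an eigenvalue, and an eigenfunction $f$ satisfies $\int_V\abs{f}^2=\norm{f}^2-\int_U\abs{f}^2=0$. Here $V$ is a nonempty open set (compactness forces $V\ne\emptyset$, since otherwise $U$ has full measure and $T_U=I$ is not compact), so the identity theorem gives $f\equiv0$, a contradiction; hence $\norm{R_U}<1$. For the complementary operator, $\norm{R_V}^2=\norm{T_V}=\sup\sigma(T_V)=1-\inf\sigma(T_U)=1$, using $0\in\sigma(T_U)$ (a compact operator on the infinite-dimensional space $A^2(\Omega)$ cannot be invertible). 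The essential norm I would obtain from the C*-identity in the Calkin algebra, $\norm{R_V}_e^2=\norm{R_V^*R_V}_e=\norm{T_V}_e=\norm{I-T_U}_e=\norm{I}_e=1$, the penultimate equality because $T_U$ is compact and the last because $I$ is not.

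For part (2), I would chain the spectral equivalences. As $\sigma(T_U)\subseteq[0,1]$ is compact, $\norm{R_U}<1$ is equivalent to $1\notin\sigma(T_U)$, which via $T_V=I-T_U$ is equivalent to $0\notin\sigma(T_V)$, i.e.\ to $T_V$ being boundedly invertible. It then remains to identify invertibility of $T_V=R_V^*R_V$ with closed range of $R_V$. If $T_V\ge cI$ for some $c>0$, then $\norm{R_Vf}^2=\ipr{T_Vf,f}\ge c\norm{f}^2$, so $R_V$ is bounded below and has closed range. Conversely, since $V$ is a nonempty open set the identity theorem makes $R_V$ injective, so closed range together with injectivity upgrades to a lower bound $\norm{R_Vf}\ge\sqrt{c}\,\norm{f}$, whence $T_V\ge cI$ is invertible.

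The conceptual core is the trivial-looking identity $T_U+T_V=I$, whose only hypothesis is the measure-zero boundary; everything else is spectral accounting. I expect the two genuinely technical points to be: (i) justifying $\norm{R_V}_e^2=\norm{T_V}_e$ for an operator between the \emph{distinct} spaces $A^2(\Omega)$ and $A^2(V)$, which I would handle by dilating $R_V$ to the operator $f\oplus g\mapsto 0\oplus R_Vf$ on $A^2(\Omega)\oplus A^2(V)$ and applying the C*-identity in the Calkin algebra of that sum; and (ii) the passage between invertibility of $T_V$ and closed range of $R_V$, where the injectivity of $R_V$ furnished by the identity theorem is exactly what converts ``closed range'' into a two-sided spectral bound. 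One should also flag the degenerate case $V=\emptyset$ (i.e.\ $U$ dense in $\Omega$): there $R_U$ is an isometry and $\norm{R_U}=1$, so the statements remain consistent provided $R_{\Omega\setminus\ol U}$ is read as defined only when $\Omega\setminus\ol U\ne\emptyset$.
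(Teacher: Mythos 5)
Your proposal is correct and follows essentially the same route as the paper: the identity $T_U+T_{\Omega\setminus\ol{U}}=I$ (which the paper obtains via $P(\chi_U f)+P(\chi_{\Omega\setminus\ol{U}}f)=Pf$, equivalent to your quadratic-form derivation), the eigenvalue bound $0<\lambda<1$ via the identity theorem, the spectral symmetry $\sigma(T_{\Omega\setminus\ol{U}})=1-\sigma(T_U)$, and the same chain of equivalences (norm less than one, $T_{\Omega\setminus\ol{U}}$ invertible, $R_{\Omega\setminus\ol{U}}$ bounded below hence closed range) for part (2). The one point where you diverge is the essential-norm identity $\norm{R_{\Omega\setminus\ol{U}}}_e^2=\norm{T_{\Omega\setminus\ol{U}}}_e$, which the paper simply cites from \v{C}u\v{c}kovi\'c--\c{S}ahuto\u{g}lu, while your dilation of $R_{\Omega\setminus\ol{U}}$ to an operator on $A^2(\Omega)\oplus A^2(\Omega\setminus\ol{U})$ followed by the C*-identity in the Calkin algebra gives a correct self-contained substitute; your explicit handling of the degenerate case $\Omega\setminus\ol{U}=\emptyset$ is also slightly more careful than the paper's implicit treatment.
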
   
 
From part (1), it follows that if $\norm{R_U}=1$, then $R_U$ (and therefore 
$T_U$) is non-compact. However, the converse is not true, i.e., there exist 
open sets $U\subset \Omega$ such that $\norm{R_U}<1$ and $R_U$ is 
not compact. Examples for $n=1$ may be found in  part (2) of 
Theorem \ref{thm-horostrip} (``horocyclic strips'') and also part (2) 
of Theorem \ref{thm-hypercycle} (``hypercyclic lunes''). In higher dimensions, 
it is even possible for $U$ to be a smoothly bounded subdomain of $\Omega$ 
touching $\Omega$ at a point. For instance, let $\D$ be a ball and 
$U\subset \D$ be a another ball with a common boundary point with $\D$. 
Then Theorem \ref{thm-balls} below implies that $\norm{R_U}<1$. However, 
$R_U$ is not compact (see \cite[Lemma 4.27]{StraubeBook}).

Part (2) of Theorem \ref{thm-basic} may be thought of as a generalized form 
of \textit{Hartogs phenomenon}. The classical Hartogs phenomenon refers to 
the situation of a domain $\Omega$ and a subset $U\subset \Omega$,
where each   holomorphic function on $\Omega\setminus \ol{U}$ extends 
to a holomorphic function on $\Omega$. This happens, for instance, when  
$U$ is relatively compact in $\Omega$, and the complement $\Omega \setminus \ol{U}$ 
is  connected (see, for instance \cite[Theorem 3.1.2]{ChenShawBook}). Hartogs 
phenomenon can happen only in dimensions $n\geq 2$.  An $L^2$-version of 
Hartogs phenomenon consists of the situation when $R_{\Omega\setminus\ol{U}}$ 
is an isomorphism from $A^2(\Omega)$ to $A^2(\Omega\setminus \ol{U})$. 
In this case $R_{\Omega\setminus\ol{U}}$ has closed range equal to $A^2(\Omega)$, 
and therefore by part (2) of our result we have $\norm{R_U}<1$. 
This suggests that restriction operators of norm less than one are 
more common in dimensions $\geq 2$. This is confirmed in 
Theorem \ref{thm-balls} and Example \ref{ex-hartogsfigure} below.

Theorem \ref{thm-basic} shows that it is important to determine 
when the norm of the restriction operator is strictly less than 1. 
In Section \ref{sec-norm} we look at the problem of computing the norm 
when the boundaries of $\Omega$ and $U$ touch at a smooth point of 
both. Theorem \ref{thm-basic} suggests that the results will be different 
for $n=1$ and $n\geq 2$. For $n=1$, in Proposition \ref{prop-generalplanar}, 
we show that  $\norm{R_U}=1$, if $\partial\Omega$ and $\partial U$ touch 
at a $C^1$-smooth point. Note that if we consider a $U$ such that a non-smooth 
point of $\partial U$ touches $\partial\Omega$, then even in one dimension, 
we may have $\norm{R_U}<1$ (see e.g. part (2) of Theorem~\ref{thm-hypercycle} 
below).

 We then consider the case when  $U$ is a ball contained in the ball 
 $\Omega$. For $n=1$ one can get an exact value of $\|R_U\|$ from 
 Proposition \ref{prop-compactdisc} when $\overline{U}\subset \D$, 
 where in fact we obtain the full spectral decomposition. 
When $n=1$ and $\partial U$ and $\partial \Omega$ touch at a point, 
the spectral decomposition of $T_U$ will be worked out in 
Theorem \ref{thm-horostrip} below. The case $n\geq 2$ is 
considered in the following.

\begin{theorem}\label{thm-balls}
Let $\Omega$ and $U$ be two balls in $\cx^n$  of radii $R$ and $r$, respectively, 
such that $U\subset \Omega$, and let $\delta$ denote the distance from the 
center of $U$ to the boundary of $\Omega$. Then 
\begin{equation} \label{eq-ruestimate}
\left(\frac{r}{R}\right)^n \leq \norm{R_U} 
\leq \left(\frac{\delta}{R}\right)^{\frac{n-1}{2}}. 
\end{equation} 
The inequalities are sharp in the sense that the lower bound is attained when 
$U$ and $\Omega$ are concentric and the upper bound is attained when the 
boundaries of the two balls touch at a point.
\end{theorem}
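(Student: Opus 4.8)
The plan is to dispatch the lower bound by a single test function, and to reduce the upper bound to a norm computation for the single extremal (internally tangent) configuration, which I will attack by turning the restriction into a composition operator on the Bergman space of the ball.

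For the lower bound I would test the Rayleigh quotient of $T_U=R_U^\ast R_U$ against the constant function $f\equiv 1$. Since $\norm{R_U}^2=\norm{T_U}\ge \ipr{T_U 1,1}/\norm{1}^2=\mathrm{vol}(U)/\mathrm{vol}(\D)=(r/R)^{2n}$, this already gives $\norm{R_U}\ge (r/R)^n$. To see sharpness in the concentric case, center $\D$ and $U$ at the origin: the monomials $z^\alpha$ form a complete orthogonal system diagonalizing $T_U$ (because $\chi_U$ is radial, $P(\chi_U z^\alpha)$ is a multiple of $z^\alpha$), with eigenvalues $(r/R)^{2\abs{\alpha}+2n}$ by scaling, the largest being $(r/R)^{2n}$ at $\alpha=0$. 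Hence $\norm{R_U}=(r/R)^n$ exactly.

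For the upper bound I would first apply $z\mapsto z/R$, which rescales $\D$ and $U$ simultaneously and leaves $\norm{R_U}$ unchanged, so I may assume $\D=\mathbb{B}^n$ and set $s=\delta/R\in(0,1]$. If $c$ is the center of $U$, then $\abs{c}=1-s$ and $U\subseteq U':=B(c,s)$, the ball of radius $s=\delta$ with the same center, internally tangent to $\partial\mathbb{B}^n$. Since $U$ and $U'$ share their center they share $\delta$, so by the monotonicity of $\norm{R_\bullet}$ in the set it suffices to prove $\norm{R_{U'}}=s^{(n-1)/2}$; the asserted inequality then holds for every such $U$ and is attained exactly when the boundaries touch. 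After a unitary rotation assume $c=(1-s)e_1$ with tangency point $e_1$. The affine map $A(z)=sz+(1-s)e_1$ satisfies $A(\mathbb{B}^n)=U'$, fixes $e_1$ with $A'(e_1)=sI$, and the change of variables $z=A(w)$ gives $\norm{R_{U'}}^2=s^{2n}\norm{C_A}^2$, where $C_A f=f\circ A$ is the composition operator on $A^2(\mathbb{B}^n)$. Thus the whole upper bound reduces to computing $\norm{C_A}=s^{-(n+1)/2}$, which yields $\norm{R_{U'}}^2=s^{2n}\cdot s^{-(n+1)}=s^{n-1}$.

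The attainment (lower) half is the easy direction: with the normalized Bergman kernels $\hat k_w$, the identity $C_A^\ast \hat k_w\propto \hat k_{A(w)}$ gives $\norm{C_A}^2\ge\sup_w\big((1-\abs{w}^2)/(1-\abs{A(w)}^2)\big)^{n+1}$, and along $w=te_1,\ t\to 1^-$, this ratio tends to $1/s$; equivalently the Berezin transform $\ipr{T_{U'}\hat k_w,\hat k_w}=\int_{U'}\abs{\hat k_w}^2$ tends to $s^{n-1}$ as $w\to e_1$, so $\norm{T_{U'}}\ge s^{n-1}$. The main obstacle is the reverse inequality $\norm{C_A}\le s^{-(n+1)/2}$, i.e. the estimate $\int_{U'}\abs{f}^2\le s^{n-1}\int_{\mathbb{B}^n}\abs{f}^2$ for all holomorphic $f$, since the reproducing-kernel test only produces lower bounds. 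Here I would either invoke the exact norm of a linear-fractional composition operator with boundary fixed point and boundary dilation $s$ on the Bergman space of the ball (whose kernel carries the exponent $n+1$), or compute directly in the unbounded realization: the Cayley transform sending $e_1\to\infty$ carries $(\mathbb{B}^n,U')$ to the Siegel domain $\mathcal{U}=\{\Im w_1>\abs{w'}^2\}$ and the genuinely narrower paraboloid $V_s=\{\Im w_1>s^{-1}\abs{w'}^2+s^{-1}(1-s)\}$, and the translation symmetry in $\Re w_1$ together with the rotational symmetry in $w'$ diagonalizes $T_{V_s}$ through a partial Fourier and Fock-type decomposition, reducing the inequality to an explicit one-variable estimate whose supremum is $s^{n-1}$. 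I expect this last computation to be the technical heart of the proof.
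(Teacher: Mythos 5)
Your proposal is correct, and on the upper bound it takes a genuinely different route from the paper. The lower bound (testing with $f\equiv 1$, plus the dilation eigenvalues $\rho^{2(\abs{\alpha}+n)}$ for the concentric case) is exactly the paper's argument. For the upper bound, however, the paper never reduces to the tangent configuration: it slices $\cx^n=\cx\times\cx^{n-1}$ along the first coordinate, notes that each fiber is a pair of \emph{concentric} balls whose restriction norm is $\bigl(\frac{r^2-\abs{z-\delta}^2}{1-\abs{z-1}^2}\bigr)^{(n-1)/2}$, and bounds this quotient by $\delta^{(n-1)/2}$ via the elementary identity $r^2-\abs{z-\delta}^2=\delta(1-\abs{z-1}^2)-\bigl((\delta^2-r^2)+(1-\delta)\abs{z}^2\bigr)$, which handles every position of $U$ directly; sharpness in the tangent case is then obtained from the singular test family $f_\gamma(z,w)=z^{-\gamma}$ with $\gamma<\frac{n+1}{2}$, for which $\norm{f_\gamma}_U^2=r^{2(n-\gamma)}\norm{f_\gamma}_\Omega^2$ by scaling. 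Your route---monotonicity reduction to the internally tangent ball $U'$, then the exact norm of $T_{U'}$ via the Cayley transform---is heavier machinery but buys more, and the one step you left unexecuted does go through, in the spirit of the paper's own Section \ref{sec-horosymmetry}: your formula $V_s=\{\Im w_1>s^{-1}\abs{w'}^2+s^{-1}(1-s)\}$ for the image of $U'$ is correct, and after a partial Fourier transform in $\Re w_1$ (dual variable $\xi>0$) the Bergman space of the Siegel domain fibers into Fock spaces with weight $e^{-2\xi\abs{w'}^2}$, on which $T_{V_s}$ acts diagonally on the monomials $(w')^\alpha$ with eigenvalue $e^{-2\xi(1-s)/s}\,s^{\abs{\alpha}+n-1}$ (using $\int\abs{(w')^\alpha}^2e^{-2\lambda\abs{w'}^2}\,dV\propto\lambda^{-(\abs{\alpha}+n-1)}$); the supremum over $\xi>0$ and $\alpha$ is $s^{n-1}$, so $\norm{T_{U'}}=s^{n-1}$ exactly, as you predicted. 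As a bonus, this yields the full spectrum $[0,s^{n-1}]$ of the tangent configuration---a higher-dimensional analogue of part (1) of Theorem \ref{thm-horostrip} that the paper's slicing proof does not give. Your kernel-quotient lower bound for $\norm{C_A}$ is also sound (and even your Berezin-transform claim is correct, since the normalized kernel at points $(i\tau,0)$ of the Siegel domain is independent of $w'$, which makes its Berezin transform over the limiting paraboloid exactly $s^{n-1}$). One caution: your fallback of citing ``the exact norm of a linear-fractional composition operator on the Bergman space of the ball'' is not safe---exact-norm formulas of Cowen type are established for $H^2(\disk)$ and for weighted Bergman spaces of the disc with affine symbol, but I know of no off-the-shelf result for your affine self-map of $\mathbb{B}^n$, so the Siegel-domain computation should be regarded as the actual proof rather than an alternative. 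By comparison, the paper's slicing argument is entirely elementary, avoids spectral analysis altogether, and proves the upper bound for all positions of $U$ in one stroke.
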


As is apparent from the examples in Section~\ref{sec-example} below, 
when the ambient domain $\Omega$ and the subset $U$ have a large 
common symmetry group, the spectrum of $T_U$ can sometimes be 
determined  explicitly. In this case, where we have Reinhardt symmetry, 
the operator $T_U$ has pure point spectrum. In Section \ref{sec-horosymmetry},  
we give a few other examples of subdomains of the unit disc for which 
there is sufficient symmetry to compute explicitly the spectrum of $T_U$. 
These computations are based on ideas of Vasilevski (see \cite{vasilevski} 
and references therein) on spectral representation of algebras of Toeplitz 
operators. Below we describe these examples in detail, which apart from 
their intrinsic interest also provide simple counterexamples for conjectures 
related to the spectrum of $T_U$.

Let $\disk$ be the unit disc in the plane, which we can think of as the hyperbolic 
plane with the Poincaré metric. Recall from hyperbolic geometry (see 
\cite[Chapter XI]{coxeter} for more information)  that a \textit{horodisc} in 
in the hyperbolic plane is represented in the Poincaré disc 
$\disk$ by an Euclidean disc whose boundary is tangent to the unit 
circle. Intuitively, a horodisc is a ``disc of infinite radius'' in the hyperbolic plane.
The boundary of a horodisc is called a \textit{horocycle}, and  the open set between 
two horocycles  tangent to $\partial \disk$ at the same point will be called a 
\textit{horocyclic strip}. Therefore, for each $\alpha\in \disk$, the set 
$ \{z\in\cx:\abs{z-\alpha}< 1- \abs{\alpha}\}$ is a horodisc, and its boundary 
in $\disk$, $ \{z\in \cx:\abs{z-\alpha}= 1- \abs{\alpha}, \abs{z}\not=1\}$,  
is a horocycle. We naturally call $\sqrt{1-\abs{\alpha}}$ the \textit{Euclidean radius} 
of the horocycle. The shaded region in Figure \ref{fig-horostrip} shows 
the horocyclic strip
\[ \left\{z\in \cx:\abs{z-\frac{1}{2}} < \frac{1}{2}, 	
\abs{z-\frac{3}{4}}> \frac{1}{4}\right\}.\]
\begin{figure}
\begin{center}
\begin{tikzpicture}[scale=2.5]
\draw (0,0) circle (1cm); 
\draw [fill=black!40] (0.5,0) circle (0.5cm);
\draw[fill=white] (0.75, 0) circle( 0.25cm);
\end{tikzpicture}
\end{center}
\caption{Horocyclic Strip}\label{fig-horostrip}
\end{figure}
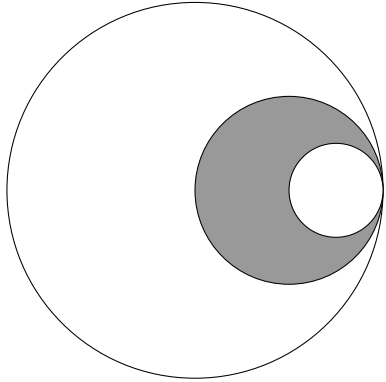

In the following theorem we characterize the spectrum of the restriction operator 
in case $U$ is a horodisc or a horocyclic strip in the unit disc.

\begin{theorem} \label{thm-horostrip} 
	Let $\disk$ be the unit disc in $\cx$. 
\begin{enumerate}
\item If $U\subset \disk$ is a horodisc, then the spectrum of $T_U$ is the 
	full interval $[0,1]$.
\item Let $0<\rho_1<\rho_2<1$ and $U\subset \disk$ be a horocyclic strip 
	between two horocycles of Euclidean radii $\rho_1$ and $\rho_2$  
	tangent to the unit circle at the same point. Then the spectrum of 
	$T_U$ is the interval 
\[ \left[0, \alpha^{- \frac{1}{\alpha -1}} - \alpha^{- \frac{\alpha}{\alpha-1}}\right]\]
	where
\begin{equation}\label{eq-alpha}
\alpha = \frac{\frac{1}{\rho_1}-1}{\frac{1}{\rho_2}-1}.
\end{equation}
\end{enumerate}
\end{theorem}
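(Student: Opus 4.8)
The plan is to exploit the large symmetry group shared by $\disk$ and $U$ together with the biholomorphic invariance of the spectrum (Proposition~\ref{prop-invariance}). Both a horodisc and a horocyclic strip are invariant under the one-parameter group of parabolic automorphisms of $\disk$ fixing the common point of tangency. The cleanest way to use this is to apply a Cayley transform $\phi:\disk\to\mathbb{H}$ onto the upper half-plane $\mathbb{H}=\{w\in\cx:\im w>0\}$ sending the point of tangency to $\infty$. Under $\phi$ every horocycle tangent at that point becomes a horizontal line $\{\im w=c\}$, so a horodisc becomes a half-plane $\{\im w>c\}$ and a horocyclic strip becomes a horizontal strip $S=\{c_1<\im w<c_2\}$, while the parabolic group becomes the group of real translations $w\mapsto w+b$. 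By Proposition~\ref{prop-invariance} it then suffices to compute the spectrum of $T_S$ on $A^2(\mathbb{H})$, where $S$ is a half-plane (case (1)) or a strip (case (2)).

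Next I would diagonalize $T_S$ using the Paley--Wiener description of $A^2(\mathbb{H})$. Every $f\in A^2(\mathbb{H})$ can be written as $f(w)=\int_0^\infty e^{i\xi w}g(\xi)\,d\xi$, and a Plancherel computation in the $x$-variable followed by integration in $y$ shows that $f\mapsto g$ is a unitary (up to a fixed constant) from $A^2(\mathbb{H})$ onto $L^2\big((0,\infty),\xi^{-1}d\xi\big)$, with $\norm{f}^2=\pi\int_0^\infty |g(\xi)|^2\,\xi^{-1}d\xi$. Since $T_S=R_S^*R_S$, one has $\ipr{T_S f,f}=\norm{R_S f}^2=\int_S |f|^2$, and the same Plancherel computation gives $\int_S|f|^2=\pi\int_0^\infty\gamma(\xi)\,|g(\xi)|^2\,\xi^{-1}d\xi$ with $\gamma(\xi)=e^{-2c_1\xi}-e^{-2c_2\xi}$ (interpreting $c_2=\infty$, i.e. $e^{-2c_2\xi}=0$, in the half-plane case). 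Because $T_S$ is self-adjoint, polarization shows that under the above unitary $T_S$ is carried to the multiplication operator $M_\gamma$. Hence $\sigma(T_S)$ equals the essential range of $\gamma$, which, since $\gamma$ is continuous and the measure $\xi^{-1}d\xi$ is equivalent to Lebesgue measure, is simply $\overline{\gamma\big((0,\infty)\big)}$.

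It then remains to read off the two ranges. For the half-plane, $\gamma(\xi)=e^{-2c\xi}$ decreases from $1$ to $0$ as $\xi$ runs over $(0,\infty)$, so $\sigma(T_S)=[0,1]$, proving (1). For the strip, $\gamma(0^+)=0$, $\gamma(+\infty)=0$ and $\gamma>0$ in between, so the range is $[0,\gamma_{\max}]$; an elementary maximization gives a critical point $\xi^*$ with $e^{-2c_1\xi^*}=\beta^{-1/(\beta-1)}$ and $e^{-2c_2\xi^*}=\beta^{-\beta/(\beta-1)}$, where $\beta=c_2/c_1$, whence $\gamma_{\max}=\beta^{-1/(\beta-1)}-\beta^{-\beta/(\beta-1)}$. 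This already has exactly the form claimed in the theorem, so the proof reduces to identifying $\beta$ with $\alpha$.

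The last step, which carries most of the bookkeeping, is to identify the ratio $\beta=c_2/c_1$ with the quantity $\alpha$ in \eqref{eq-alpha}. Tracking a horocycle of Euclidean radius $\rho$ through the explicit Cayley transform (e.g. $\phi(z)=i\frac{1+z}{1-z}$, with tangency at $z=1$) shows that it is sent to the line $\{\im w=c(\rho)\}$, where the normalization defining the Euclidean radius is arranged precisely so that $c(\rho)$ is a decreasing function proportional to $\tfrac1\rho-1$; since the strip lies between the lines coming from $\rho_1<\rho_2$, we get $c_1=c(\rho_2)$, $c_2=c(\rho_1)$ and $\beta=c(\rho_1)/c(\rho_2)=\frac{1/\rho_1-1}{1/\rho_2-1}=\alpha$. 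Substituting $\beta=\alpha$ into $\gamma_{\max}$ yields the stated interval and completes (2). I expect the main obstacle to be the careful justification of the Paley--Wiener unitary and of the principle ``$T_S$ commutes with the translations $\Rightarrow$ $T_S$ is a Fourier multiplier'' (here bypassed by the direct quadratic-form computation, which is what makes the argument clean); by contrast the maximization and the Cayley-transform normalization are routine, the only real care being to fix the convention for the Euclidean radius so that the ratio comes out as $\frac{1/\rho_1-1}{1/\rho_2-1}$.
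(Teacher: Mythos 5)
Your proposal is correct, and its overall route coincides with the paper's: the same Cayley transform $z\mapsto i\frac{1+z}{1-z}$ sending the tangency point to $\infty$, the same reduction via Proposition~\ref{prop-invariance} to a half-plane or horizontal strip in $\mathbb{H}$, the same multiplier function (your $\gamma(\xi)=e^{-2c_1\xi}-e^{-2c_2\xi}$ is exactly the paper's $e^{-2x}-e^{-2x\alpha}$ after the dilation normalizing $c_1=1$, $c_2=\alpha$, a step your ratio $\beta=c_2/c_1$ makes unnecessary), and the same critical-point computation yielding $\alpha^{-1/(\alpha-1)}-\alpha^{-\alpha/(\alpha-1)}$. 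The one genuine difference is how the key diagonalization is obtained: the paper quotes Vasilevski's Theorem 5.2.1 (\cite[Theorem 5.2.1]{vasilevski}, reproduced as Theorem~\ref{thm-vasilevski1}) as a black box, whereas you derive the unitary equivalence $T_S\cong M_\gamma$ from scratch via the Paley--Wiener representation $f(w)=\int_0^\infty e^{i\xi w}g(\xi)\,d\xi$, computing the quadratic form $\int_S|f|^2$ by Plancherel in $x$ and integrating out $y$, then invoking polarization. Your computations check out: $\norm{f}^2_{A^2(\mathbb{H})}=\pi\int_0^\infty|g|^2\xi^{-1}d\xi$, the strip form produces exactly $e^{-2c_1\xi}-e^{-2c_2\xi}$, the equivalence of $\xi^{-1}d\xi$ with Lebesgue measure legitimately converts essential range into closure of range, and the horocycle bookkeeping $c(\rho)=\frac1\rho-1$ matches the paper's, giving $\beta=\alpha$. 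What your approach buys is self-containedness and a direct explanation of \emph{why} the translation symmetry forces a multiplier structure (the quadratic-form argument neatly sidesteps having to prove abstractly that a translation-commuting operator is a Fourier multiplier); what it costs is precisely the point you flag, namely a careful proof that the Paley--Wiener map is a surjective unitary onto $L^2\bigl((0,\infty),\xi^{-1}d\xi\bigr)$ --- this is standard but nontrivial, and it is essentially the content of Vasilevski's theorem, so in substance you are reproving his result in the special case of symbols $\chi_{\{c_1<\Im w<c_2\}}$ rather than citing it.
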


Consequently, the spectrum of $T_U$ consists only of essential points. 

 Another situation with $\Omega=\disk$ in which one can compute the 
 spectrum is the following. In hyperbolic  plane geometry, given a 
 hyperbolic geodesic $\Gamma$, a \textit{hypercycle}  or a 
 \textit{equidistant-curve} with \textit{axis} $\Gamma$ is a curve 
 consisting of points at a fixed signed distance from $\Gamma$ 
 (see \cite[Chapter XI]{coxeter}). In the Poincar\'e disc model, the 
 geodesic $\Gamma$ becomes a Euclidean circle or straight line which 
 meets the bounding unit circle $\partial \disk$ orthogonally, whereas a 
 hypercycle becomes an arc of a Euclidean circle or a segment of a straight 
 line which passes through the two points in $\overline{\Gamma}\cap \partial \disk$. 
 Hypercycles with the same axis may be referred to as \textit{coaxial}. 
 The region between two coaxial hypercycles will be referred to as a 
 \textit{hypercyclic lune}. The shaded region in Figure \ref{fig-hypercycle} 
 shows the hypercyclic lune between two coaxial hypercycles with axis 
 the diameter $(-1,1)$ of the unit circle, which is clearly a hyperbolic geodesic. 
 
 The  ideal endpoints  $\overline{\Gamma}\cap \partial \disk$ of the 
 geodesic $\Gamma$ divide the circle $\partial \disk$ into two arcs 
 $C_1, C_2$. The region bounded by a hypercycle with axis $\Gamma$ 
 and one of the arcs $C_j$  will be called a \textit{hypercyclic crescent}.  
 The complement of the shaded hypercyclic lune of Figure \ref{fig-hypercycle} 
 is the disjoint union of two hypercyclic crescents.
 
\begin{theorem}\label{thm-hypercycle}
		Let $\disk$ be the unit disc in $\cx$.
\begin{enumerate}
\item If $U\subset \mathbb{D}$ is a hypercyclic crescent, then the 
	spectrum of $T_U$ is the full interval $[0,1]$. 
\item If $U\subset \mathbb{D} $ is a hypercyclic lune determined 
	by two coaxial hypercycles,	then the spectrum of $T_U$ is an 
	interval of the form $[0,  \norm{T_U}]$ where $0<\norm{T_U}<1$. 
\end{enumerate}
\end{theorem}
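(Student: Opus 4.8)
The plan is to exploit the large common symmetry group of $\disk$ and $U$ by moving both into a normal form in which the symbol $\chi_U$ becomes invariant under a one‑parameter group of translations, and then to diagonalize $T_U$ by a Fourier transform along that group, following Vasilevski's spectral analysis of Toeplitz operators (see \cite{vasilevski}). Concretely, I would first apply a Möbius automorphism of $\disk$ carrying the two ideal endpoints $\overline{\Gamma}\cap\partial\disk$ of the axis to $\pm 1$, followed by a Cayley transform sending $\pm 1$ to $0$ and $\infty$, so that $\disk$ becomes the upper half‑plane $\mathbb{H}$ and $\Gamma$ becomes the positive imaginary axis. Under these maps a hypercycle coaxial with $\Gamma$ becomes a Euclidean ray from the origin; a hypercyclic lune becomes an angular sector $\{\theta_1<\arg z<\theta_2\}$ with $0<\theta_1<\theta_2<\pi$, and a hypercyclic crescent becomes a sector $\{0<\arg z<\theta_0\}$ touching the real axis. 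Finally $w=\log z$ maps $\mathbb{H}$ biholomorphically onto the strip $S=\{w:0<\Im w<\pi\}$ and turns these sectors into horizontal sub‑strips $\{a<\Im w<b\}$. Since the spectrum of $T_U$ is a biholomorphic invariant (Proposition \ref{prop-invariance}), it suffices to compute it for $T_U$ on $A^2(S)$ with $U=\{a<\Im w<b\}$, where $(a,b)=(\theta_1,\theta_2)$ in the lune case and $(a,b)=(0,\theta_0)$ in the crescent case.

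Next I would diagonalize $T_U$ using the Fourier transform in $\Re w$, the natural tool now that $\chi_U$ depends only on $\Im w$ and is invariant under horizontal translation. Writing a holomorphic $L^2$ function on the strip as $F(w)=\int_{\R}c(\xi)e^{i\xi w}\,d\xi$, Plancherel's theorem on each horizontal line together with an integration in $\Im w$ yields an isometry $F\mapsto c$ of $A^2(S)$ onto $L^2(\R,\rho\,d\xi)$, where $\rho(\xi)=\int_0^\pi e^{-2\xi v}\,dv$. Applying the same computation to $\int_U\abs{F}^2$ shows that $T_U$ is carried to multiplication by the explicit function
\[ \gamma(\xi)=\frac{\int_a^b e^{-2\xi v}\,dv}{\int_0^\pi e^{-2\xi v}\,dv}=\frac{e^{-2a\xi}-e^{-2b\xi}}{1-e^{-2\pi\xi}},\qquad \xi\in\R. \]
As $\rho>0$ almost everywhere and $\gamma$ is continuous, the spectrum of $T_U$ equals the essential range of $\gamma$, that is, the closure $\overline{\gamma(\R)}$.

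It then remains to analyze this elementary function. In both cases $0\le\gamma\le 1$, and the strict inclusion $\{a<v<b\}\subset\{0<v<\pi\}$ forces $\gamma(\xi)<1$ for every $\xi$. For the lune, $0<a<b<\pi$ gives $\gamma(\xi)\to 0$ as $\xi\to\pm\infty$ (the numerator is dominated by the denominator at $+\infty$ because $a>0$ and at $-\infty$ because $b<\pi$), while $\gamma(0)=(b-a)/\pi>0$; hence $\gamma$ is continuous, positive, and vanishing at both ends, so it attains a maximum $M$ with $0<M<1$ and $\overline{\gamma(\R)}=[0,M]$, which is part (2) with $\norm{T_U}=M$. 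For the crescent, $a=0$ gives $\gamma(\xi)=(1-e^{-2\theta_0\xi})/(1-e^{-2\pi\xi})$ with $\gamma(\xi)\to 0$ as $\xi\to-\infty$ and $\gamma(\xi)\to 1$ as $\xi\to+\infty$; by connectedness the range is an interval with infimum $0$ and supremum $1$, so $\overline{\gamma(\R)}=[0,1]$, which is part (1).

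The geometric normalization and the final calculus estimates are routine; the technical heart is the diagonalization step. The main obstacle will be to establish rigorously the Paley–Wiener type unitary equivalence $A^2(S)\cong L^2(\R,\rho\,d\xi)$ for the unbounded strip—checking that every $c\in L^2(\rho\,d\xi)$ produces a genuine holomorphic $L^2$ function, that the correspondence is onto, and that the interchange of the Fourier transform with the integration defining $\langle T_U F,F\rangle$ is legitimate—together with the clean identification of the resulting multiplier. This is exactly where Vasilevski's framework is invoked; once it is in place, both statements follow from the explicit form of $\gamma$.
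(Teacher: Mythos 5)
Your proposal is correct, and at its core it follows the same route as the paper: a conformal normalization sending the lune or crescent to an angular wedge $W(\alpha,\beta)$ in the upper half-plane, a Vasilevski-type diagonalization of $T_U$ as a multiplication operator with exactly the multiplier $\gamma(\lambda)=\frac{e^{-2\lambda\alpha}-e^{-2\lambda\beta}}{1-e^{-2\pi\lambda}}$ of \eqref{eq-xi}, and the same endgame analysis of limits at $\pm\infty$. There are, however, two genuine differences. First, where the paper simply cites Vasilevski's Theorem 7.2.1 for symbols depending only on $\arg z$, you add the further biholomorphism $w=\log z$ onto the strip $\{0<\Im w<\pi\}$ and re-derive the diagonalization by Fourier transform in $\Re w$; this amounts to a self-contained proof of the cited theorem (your weight $\rho(\xi)=\int_0^\pi e^{-2\xi v}\,dv$ is finite and positive for every real $\xi$, which is why you land in $L^2(\R,\rho\,d\xi)$ rather than in $L^2(\R^+)$ as in the horodisc case), at the cost of the Paley--Wiener surjectivity statement for the Bergman space of a strip, which you correctly flag as the technical heart---it is standard and true, but it must actually be proved if you do not lean on \cite{vasilevski}. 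Second, your argument that $\norm{T_U}<1$ for the lune is cleaner than the paper's: the paper proves a separate lemma bounding $\sup_{0<\xi<\infty}\frac{\xi^b-\xi^a}{\xi-1}$ by a three-region case analysis, whereas your integral-quotient representation $\gamma(\xi)=\bigl(\int_a^b e^{-2\xi v}\,dv\bigr)\big/\bigl(\int_0^\pi e^{-2\xi v}\,dv\bigr)$ makes the pointwise bound $\gamma(\xi)<1$ immediate, and continuity together with decay at $\pm\infty$ forces the supremum to be attained at a finite point, hence to be strictly less than $1$; positivity of $\gamma$ and the intermediate value theorem then give the full interval $[0,\norm{T_U}]$. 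One small point to patch: a hypercyclic crescent is represented in $\mathbb{H}$ by a wedge of the form $W(\alpha,\pi)$ or of the form $W(0,\beta)$, and you treat only the latter; either handle both limits (as the paper does) or observe that the automorphism $z\mapsto -1/z$ of $\mathbb{H}$ carries $W(\alpha,\pi)$ onto $W(0,\pi-\alpha)$, so your normalization is legitimate.
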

Observe that  there is a one parameter group of biholomorphic automorphisms 
of the unit disc which maps a horocyclic strip  (or a horodisc) to itself. Similarly, 
there is a one-parameter group of automorphisms of the unit disc which maps 
a hypercyclic lune to itself. Both assertions can be seen by mapping the unit 
disc onto the upper half plane. The presence of such a large group of 
automorphisms implies noncompactness of the restriction operator.

\begin{figure}
	\begin{center}
		\begin{tikzpicture}[scale=2.5]x
		\draw (0,0) circle (1cm); 
		\draw [fill=black!40](1,0) arc [radius=1.11803, 
		start angle=26.56505, end angle= 153.43494];
		\draw [fill=white](1,0) arc [radius=1.25, 
		start angle=36.86989, end angle=143.130102];
		\end{tikzpicture}
	\end{center}
	\caption{Region between two hypercycles}\label{fig-hypercycle}
\end{figure}
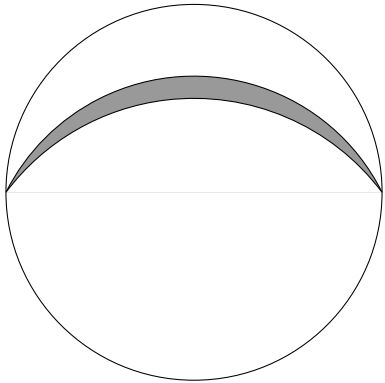

\begin{theorem}\label{thm-symmetry} 
	Let $\Omega$ be a smooth bounded pseudoconvex domain in $\cx^n$ 
	and  $U\subset \Omega$ be a non-empty open subset. Assume that there 
	exists a closed noncompact subgroup $G$ of the  topological group  
	$\text{Aut}(\Omega)$ of biholomorphic automorphisms of $\Omega$ 
	such that $f(U)\subset U$ for all $f\in G$. Then the operator $T_U$ is non-compact.
\end{theorem}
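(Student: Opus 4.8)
The plan is to exploit the unitary implementation of $\mathrm{Aut}(\Omega)$ on the Bergman space that already underlies Proposition~\ref{prop-invariance}, and to manufacture a sequence of unit vectors that is weakly null yet carries a fixed amount of $L^2$-mass in $U$. First I would record a purely set-theoretic simplification: since $G$ is a group with $f(U)\subset U$ for every $f\in G$, applying this to $f^{-1}\in G$ and then to the bijection $f$ gives $U\subset f(U)$, so in fact $f(U)=U$ for all $f\in G$; thus $G$ stabilizes $U$ setwise. Next, for $f\in\mathrm{Aut}(\Omega)$ I would use the weighted composition operator $V_f\colon A^2(\Omega)\to A^2(\Omega)$, $(V_f h)(z)=h(f(z))\det f'(z)$, with $\det f'$ the holomorphic Jacobian. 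The holomorphic change of variables shows $V_f$ is unitary, and the same computation gives
\begin{equation*}
\ipr{T_U V_f h,\, V_f h}=\int_U \abs{h(f(z))}^2\abs{\det f'(z)}^2\,dV(z)=\int_{f(U)}\abs{h}^2\,dV .
\end{equation*}
In particular $V_f^{*}T_U V_f=T_{f(U)}$, the operator-level form of Proposition~\ref{prop-invariance}; and since $f(U)=U$ for $f\in G$, each such $V_f$ commutes with $T_U$.

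The strategy is to contradict compactness by producing unit vectors $g_j\rightharpoonup 0$ with $\ipr{T_U g_j,g_j}$ bounded below. Fix $h\in A^2(\Omega)$ with $\norm{h}=1$; as $h$ is holomorphic and not identically zero on the nonempty open set $U$, the number $c_0:=\int_U\abs{h}^2\,dV$ is strictly positive. Because $G$ is a closed \emph{noncompact} subgroup of the metrizable Lie group $\mathrm{Aut}(\Omega)$, I can choose a sequence $\tau_j\in G$ with no subsequence convergent in $\mathrm{Aut}(\Omega)$; for otherwise every sequence in $G$ would have a subsequence converging to a point of the closed set $G$, forcing $G$ to be compact. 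Since $\Omega$ is bounded, Montel's theorem lets me pass to a subsequence along which $\tau_j\to\Phi$ locally uniformly for some holomorphic $\Phi\colon\Omega\to\overline{\Omega}$; by H.~Cartan's theorem on limits of automorphism sequences, either $\Phi$ is an automorphism---which would make the subsequence convergent in $\mathrm{Aut}(\Omega)$, contrary to our choice---or $\Phi(\Omega)\subset\partial\Omega$, so the latter holds. Setting $\sigma_j:=\tau_j^{-1}\in G$ and $g_j:=V_{\sigma_j}h$, unitarity gives $\norm{g_j}=1$, while the displayed identity together with $\sigma_j(U)=U$ gives $\ipr{T_U g_j,g_j}=\int_U\abs{h}^2=c_0$ for every $j$.

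It remains to prove $g_j\rightharpoonup 0$ weakly, which is the analytic heart of the argument: I must show the mass of $g_j$ escapes to $\partial\Omega$. For a compact $L\subset\subset\Omega$ the change of variables $w=\sigma_j(z)$ yields $\int_L\abs{g_j}^2\,dV=\int_{\sigma_j(L)}\abs{h}^2\,dV=\int_\Omega \chi_{\sigma_j(L)}\abs{h}^2\,dV$. For fixed $w\in\Omega$ one has $w\in\sigma_j(L)$ iff $\tau_j(w)=\sigma_j^{-1}(w)\in L$; but $\tau_j(w)\to\Phi(w)\in\partial\Omega$, so $\tau_j(w)$ eventually leaves the compact set $L$, whence $\chi_{\sigma_j(L)}(w)\to 0$ pointwise on $\Omega$. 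As $\abs{h}^2\in L^1(\Omega)$ dominates the integrand, dominated convergence gives $\int_L\abs{g_j}^2\,dV\to 0$ for every such $L$. Applying this with $L=\overline{B(w,r)}\subset\Omega$ and invoking the sub-mean-value inequality for the plurisubharmonic function $\abs{g_j}^2$ shows $g_j(w)\to 0$ for each $w$; combined with $\norm{g_j}=1$ and the reproducing property $g_j(w)=\ipr{g_j,K_\Omega(\cdot,w)}$, this bounded pointwise convergence forces $g_j\rightharpoonup 0$ in $A^2(\Omega)$. Finally, were $T_U$ compact it would send the weakly null $g_j$ to a norm-null sequence, giving $\ipr{T_U g_j,g_j}\to 0$ and contradicting $\ipr{T_U g_j,g_j}\equiv c_0>0$; hence $T_U$ is noncompact. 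The main obstacle is exactly the geometric input in the middle paragraph---using Cartan's theorem to guarantee that a divergent sequence in $G$ accumulates only at maps into $\partial\Omega$---and then converting this boundary escape into genuine weak nullity via the change of variables and the sub-mean-value estimate; note that pseudoconvexity and boundary smoothness are not actually needed here beyond boundedness of $\Omega$.
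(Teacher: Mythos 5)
Your proof is correct, and it takes a genuinely different route from the paper's. Both arguments share the same skeleton---produce unit vectors carrying a fixed amount of $L^2$-mass on $U$, show they are weakly null, and contradict compactness---but the ingredients differ at every step. The paper pushes a point $z_0\in U$ to the boundary along its $G$-orbit using the properness of the action of $\mathrm{Aut}(\Omega)$ on $\Omega$ (\cite[Theorem 1.3.12]{green-krantz}), takes as test functions the normalized Bergman kernels $B_\Omega(\cdot,\Phi_j(z_0))/\sqrt{B_\Omega(\Phi_j(z_0),\Phi_j(z_0))}$, computes their constant mass on $U$ via the kernel transformation rule, and establishes weak nullity by combining Catlin's theorem on the density of $A^\infty(\overline{\Omega})$ in $A^2(\Omega)$ \cite{Catlin80} with the boundary blow-up $B_\Omega(z,z)\to\infty$ \cite{Pflug75}---and it is precisely these last two inputs that consume the smoothness and pseudoconvexity hypotheses. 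You instead translate a single fixed unit vector by the unitaries $V_{\sigma_j}$, replace properness by Montel plus H.~Cartan's dichotomy for divergent automorphism sequences (note that if the locally uniform limit were an automorphism, that already \emph{is} convergence in the compact-open topology of $\mathrm{Aut}(\Omega)$, so the dichotomy applies cleanly), and obtain weak nullity by elementary means: change of variables, dominated convergence, the sub-mean-value inequality, and density of the span of reproducing kernels. Your opening observation that the group property upgrades $f(U)\subset U$ to $f(U)=U$ is also a worthwhile clarification, since the paper's computation of $\int_U\abs{f_j}^2\,dV$ performs a change of variables over $U$ that implicitly uses exactly this setwise invariance. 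What each approach buys: yours is more general---as you note, it proves the theorem for an arbitrary bounded domain, with no smoothness or pseudoconvexity, and in fact yields the quantitative bound $\norm{T_U}_e\geq\int_U\abs{h}^2\,dV$ for every unit vector $h$; the paper's kernel-based route, on the other hand, localizes the noncompactness at a specific boundary point $q\in\partial\Omega\cap\partial U$ and meshes with the Bergman-kernel machinery (transformation rule, boundary behavior) used elsewhere in the paper.
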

We note that the group of automorphisms $\text{Aut}(\Omega)$ is naturally a 
topological group in the compact-open topology, and when $\Omega$ is bounded, 
this group has a natural Lie group structure. See \cite{NarasimhanBook} for more details.

Theorem \ref{thm-symmetry} is purely qualitative, and only gives the existence of 
an essential spectrum of $T_U$ without describing its nature. We recall that 
the \textit{essential spectrum}  is part of the spectrum that does not contain the 
eigenvalues with finite multiplicity (see, for example, \cite[pg 73]{DaviesBook}).
In the absence of a large group of automorphisms of $\Omega$ which maps 
$U$ to itself, one cannot hope for a complete determination of the spectrum of 
$U$ relative to $\Omega$ along the lines of Theorems \ref{thm-horostrip} 
or \ref{thm-hypercycle}. However, considering that $T_U$ is a Toeplitz 
operator on $A^2(\Omega)$, one can obtain further interesting information 
about the spectrum. 

Theorems \ref{thm-horostrip} and  \ref{thm-hypercycle} may give the impression that
regions in the disc which have cusps or corners
 on the boundary typically have noncompact restriction 
operators. A counterexample to this is shown in Figure \ref{fig-ideal}, which shows an 
``ideal triangle''  $\Delta$ in the Poincaré disc. This non-compact open set is bounded 
by three entire hyperbolic geodesics, and it is known from hyperbolic geometry that the 
hyperbolic area $A_H(\Delta)= \pi$, provided the metric is normalized to have 
Gaussian curvature $-1$. In fact, in this case, not only are the operators
$T_U$ and $R_U$ compact, but $T_U$ belongs to the \textit{trace class}, i.e., 
the \textit{Schatten 1-norm} of $T_U$ 
\[ \norm{T_U}_{S_1} = \sum_{j=1}^\infty \lambda_j <\infty\]
where $\{\lambda_j\}$ are the eigenvalues of the compact operator $T_U$. 
In fact, for a domain $\Omega\subset \cx^n$ and an open set $U\subset \Omega$, 
the Schatten-1 norm of $T_U$ (i.e. trace of $T_U$) may be computed by the trace 
formula
\begin{equation}\label{eq-trace}
\norm{T_U}_{S_1} = \int_U B_\Omega(\zeta, \zeta)dV(\zeta),
\end{equation}
where $B_\Omega$ is the Bergman kernel of $\Omega$ (see 
Section \ref{sec-schatten} for a proof). When $\Omega$ is homogeneous 
(for example the unit disc or ball) this is the same (up to a multiplicative 
constant) as the Riemannian volume of $U$ in the Bergman metric. 
Therefore, it follows that for homogenous domains $\Omega$, the volume 
of $U$ is a a spectral invariant.  For the unit disc, we have the normalized 
volume element $dA_H(z) = \frac{4dV(z)}{(1-\abs{z}^2)^2}$. So  it follows 
that the trace of the operator $T_\Delta$  on $A^2(\disk)$ is given by
\[\norm{T_\Delta}_{S_1}
= \int_\Delta B(z,z)dV(z)
= \int_\Delta \frac{1}{4\pi} dA_H(z)
= \frac{1}{4},\]
so that the restriction operator $R_U$ belongs to the Schatten $\frac{1}{2}$-class, and 
\[ \norm{R_\Delta}_{S_{1/2}}=\sqrt{\norm{T_\Delta}_{S_1}}= \frac{1}{2}.\]
In particular, we have the crude norm estimate
\[ \norm{R_\Delta} < \frac{1}{4}.\]

More generally, we can measure the ``degree of compactness'' of the operators 
$R_U$ and $T_U$  using the \textit{Schatten $p$-norm} $\norm{T_U}_{S_p}$ 
 and the fact that $\norm{R_U}^2_{S_p}=\norm{T_U}_{S_{2p}}$  
(see \cite{zhu_book_2007}).  If $T_U$ is compact, since it is known 
to be  positive and self-adjoint, we have 
\[\norm{T_U}^p_{S_p} =\sum_{j=1}^\infty \lambda_j^p,\] 
where $\{\lambda_j\}_{j=1}^\infty$ are the eigenvalues of $T_U$. 
For each $p$, the Schatten norm $\norm{T_U}_{S_p}$ gives 
us a way of measuring the size of $U$ relative to $\Omega$.  
In  view of \eqref{eq-trace} we can say that the Schatten norms represent
``volume-like'' invariants of the biholomorphic geometry of subdomains. 
In Section \ref{sec-schatten}, we consider the problem of computing Schatten 
norms of $R_U$ and $T_U$.

\begin{figure}
\begin{center}
\begin{tikzpicture}[scale=2.5]
\draw [fill=black!40](0,0) circle (1cm); 
\draw [fill=white](0.866025,-0.5) arc [radius=1.7320508, start angle=60, end angle= 120];
\draw [fill=white](-0.866025,-0.5) arc[ radius=1, start angle = 210,  end angle=330];
\draw [fill=white](-0.866025,-0.5) arc [radius=1.7320508, start angle=-60, end angle=0];
\draw [fill=white](0,1) arc[ radius=1, start angle =90,  end angle=210];
\draw [fill=white](0,1) arc [radius=1.7320508, start angle=180, end angle= 240];
\draw [fill=white](0.866025,-0.5) arc[ radius=1, start angle =-30,  end angle=90];
\end{tikzpicture}
\end{center}
\caption{Ideal triangle}\label{fig-ideal}
\end{figure}
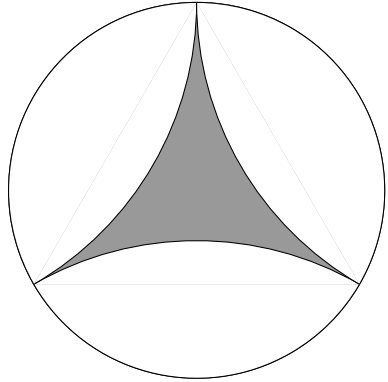

\subsection{Historical note} As early as 1927,  Stefan Bergman had developed the theory of 
``Doubly Orthogonal'' sequences of functions in Bergman spaces, which implicitly involves 
the restriction operator (see \cite{BergmanBook}.)  In \cite{Shapiro79} this relationship was 
made precise (this is basically our Proposition~\ref{PropCompact}).

When $U$ is a relatively compact subset  of 
$\Omega$, compactness of  $R_U$  follows immediately from  
Montel's  Theorem. In \cite[page 61ff.]{gunning66} this  was used in a proof of 
the  finite dimensionality of cohomology groups of compact Riemann surfaces. 
Henkin and Mityagin in \cite[Lemma 1.6]{henkinmityagin} further showed that  
in this case the operator $R_U$  is in each Schatten class. In \cite{lark}, the spectrum 
of the operator $T_U$ was studied when the ambient domain $\Omega$ is the unit 
disc and $U$ is a disjoint union of annuli, all centered at 0. Related results were 
also studied in \cite{luecking81, luecking84}.
	 
The non-compactness of the restriction operator when $\Omega$ 
is a convex domain in $\cx^n$ and $U$ is a dilated subdomain of $\D$ 
	(with a common boundary point with $\D$)
was used by Fu and Straube in \cite{FuStraube98} to study noncompactness 
of the $\dbar$-Neumann problem. These investigations were continued by 
Mijoung  Kim in her thesis \cite{KimThesis} and in \cite{Kim04}, where $U$ 
is said to be ``fat'' or ``thick'' in $\Omega$ if $R_U$ is not compact. Further 
extensions of these ideas may be found in 	\cite{SahutogluStraube06, DallAra18}.  

In the work of Vasilevski (\cite{vasilevski} and references therein), special cases 
of the operators $T_U$ on the Bergman space of the disc
arise in a different context. When the set $U$ has certain symmetries, it 
follows that the operator $T_U$ belongs to a commutative subalgebra of 
Toeplitz operators. We use results from \cite{vasilevski} to compute the 
spectra of certain subsets of the unit disc below 
(see Section \ref{sec-horosymmetry}).

\subsection{Acknowledgment} 
We thank Siqi Fu, Trieu Le,  and Laszlo Lempert for helpful comments 
and  discussions.
 
 \section{Basic facts}\label{sec-basic}
 
 \subsection{Definitions and notation}
	For a domain $\Omega$ in $\cx^n$,  recall that we denote by 
	$A^2(\Omega)$ the Bergman space of  holomorphic functions 
	$f$ on $\Omega$ such that 
	\[ \norm{f}_\Omega^2=\int_\Omega \abs{f}^2 dV <\infty,\]
	where $dV$ is the natural volume form of $\cx^n$ (identified 
	with $\rl^{2n}$). Then $A^2(\Omega)$   is a Hilbert space with 
	the norm $\norm{\cdot}_\Omega$.  
	
 Since $R_U:A^2(\Omega)\to A^2(U)$  is an injective bounded linear 
map between Hilbert spaces, the adjoint $R_U^*$ is a bounded linear 
map from $A^2(U)$ to $A^2(\Omega)$, which has dense range.  
Let $B_\Omega:\Omega\times\Omega\to \cx$ denote the Bergman 
kernel of $\Omega$. Recall that the operator $T_U$ on $A^2(\Omega)$ 
is defined to be the composition $R_U^* R_U$. 

\begin{proposition} \label{prop-t} 
Let $\Omega$ be a domain in $\cx^n$ and $U$ be a non-empty open 
subset of $\Omega$. Then for $g\in A^2(\Omega)$,  we have
\[ T_U g= P(\chi_U g),\]
where $P:L^2(\Omega)\to A^2(\Omega)$ is the Bergman projection 
and $\chi_U$ is the characteristic function of the set $U$. Furthermore, 
the operator $T_U$ is injective and its range is dense in $A^2(\Omega)$.
 \end{proposition}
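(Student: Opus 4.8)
The plan is to verify the three assertions in turn, each of which reduces to the defining property of the Bergman projection $P$ together with the identity theorem for holomorphic functions. I would set up all computations by pairing against test functions and keeping careful track of which inner product is in play ($L^2(\Omega)$, $A^2(\Omega)$, or $A^2(U)$).

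To establish the Toeplitz identity $T_U g = P(\chi_U g)$, I would test both sides against an arbitrary $f\in A^2(\Omega)$ and compare inner products in $A^2(\Omega)$. Unwinding the definition $T_U = R_U^* R_U$ and the adjoint relation gives
\[ \langle T_U g, f\rangle_\Omega = \langle R_U g, R_U f\rangle_U = \int_U g\,\overline{f}\, dV. \]
On the other hand, since $P$ is the orthogonal projection of $L^2(\Omega)$ onto $A^2(\Omega)$ and $f$ is already holomorphic, self-adjointness of $P$ and $Pf=f$ yield
\[ \langle P(\chi_U g), f\rangle_\Omega = \langle \chi_U g, f\rangle_{L^2(\Omega)} = \int_\Omega \chi_U\, g\,\overline{f}\, dV = \int_U g\,\overline{f}\, dV. \]
The two quantities agree for every $f\in A^2(\Omega)$, so the elements $T_U g$ and $P(\chi_U g)$ of $A^2(\Omega)$ coincide. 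The only step needing a word of justification is the passage $\langle Pv, f\rangle = \langle v, f\rangle$ for $v\in L^2(\Omega)$ and $f\in A^2(\Omega)$, which is immediate from $P=P^*$ and $Pf=f$.

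For injectivity, suppose $T_U g = 0$. Pairing with $g$ gives $0 = \langle T_U g, g\rangle_\Omega = \norm{R_U g}_U^2$, so $R_U g = 0$, i.e. $g$ vanishes on the non-empty open set $U$. Since $\Omega$ is a domain (hence connected) and $g$ is holomorphic on $\Omega$, the identity theorem forces $g\equiv 0$. Thus $\ker T_U = \{0\}$ (equivalently, $R_U$ is injective). Finally, the density of the range follows formally from injectivity once we use that $T_U$ is self-adjoint: for any bounded operator one has $\overline{\operatorname{Ran}(T_U)} = (\ker T_U^*)^\perp$, and since $T_U = R_U^* R_U$ is self-adjoint and injective, the right-hand side equals $(\ker T_U)^\perp = \{0\}^\perp = A^2(\Omega)$.

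I do not anticipate a genuine obstacle here: every step is a direct consequence of the definitions, the defining property of $P$, the identity theorem, and the abstract kernel--range duality for self-adjoint operators. The only place to be slightly careful is to interpret the formal manipulations with $P$ and the adjoints using the correct inner products, but no analytic difficulty arises.
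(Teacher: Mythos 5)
Your proof is correct and takes essentially the same approach as the paper's: both rest on the duality chain $\langle T_U g, f\rangle_\Omega = \langle R_U g, R_U f\rangle_U = \langle \chi_U g, f\rangle_{L^2(\Omega)} = \langle P(\chi_U g), f\rangle_\Omega$ (using $P=P^*$ and $Pf=f$), the identity theorem on the connected set $\Omega$ for injectivity, and self-adjointness of $T_U$ combined with $\overline{\operatorname{Ran}(T_U)}=(\ker T_U)^\perp$ for density of the range. The only cosmetic difference is that the paper first records the explicit Bergman-kernel formula $R_U^*f(z)=\int_U f(w)B_\Omega(z,w)\,dV(w)$ for the adjoint (an intermediate result it reuses elsewhere) and then composes with $R_U$, whereas you test $T_U g$ weakly against an arbitrary $f\in A^2(\Omega)$ directly, which is the same computation reorganized.
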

 
 \begin{proof}
 Let $f\in A^2(U)$. We will first show that the adjoint operator   
 $R_U^*:A^2(U)\to A^2(\Omega)$ is given by the formula 
 \begin{equation}\label{eq-rustar}
 R_U^*f(z)= \int_U f(w)B_\Omega(z,w)dV(w).
\end{equation}
Notice that this may be  written in the form $R_U^* f =P(\wt{f})$, where 
$\wt{f}\in L^2(\Omega)$ is the function which coincides with $f$ on $U$ 
and is zero on $\Omega\setminus U$. This shows that the right hand side 
of  \eqref{eq-rustar} defines a bounded linear map from $A^2(U)$ 
to $A^2(\Omega)$.	To prove \eqref{eq-rustar}, let $g\in A^2(\Omega)$. Then 
 \begin{align*} \ipr{R_U^*f,g}_{A^2(\Omega)}= \ipr{f,R_U g}_{A^2(U)}
 = \ipr{\wt{f},g}_{L^2(\Omega)}
 =\ipr{P(\wt{f}), g}_{A^2(\Omega)}.
 \end{align*}
We note that the last equality follows on decomposing $\wt{f}$ into its 
components along $A^2(\Omega)$ and  $A^2(\Omega)^\perp$. This 
shows that $R_U^*f-P(\wt{f})\in A^2(\Omega)$ is orthogonal to each element 
of $A^2(\Omega)$. Hence, $R_U^*f-P(\wt{f})=0$ which establishes \eqref{eq-rustar}. 

For $g\in A^2(\Omega)$ and $z\in \Omega$ we now have
 \begin{align*}
 T_Ug(z)&=R^*_U\left(R_U g  \right)(z)\nonumber\\
 &= \int_U g(w) B_\Omega(z,w) dV(w)\\
&=\int_\Omega\left(\chi_U(w)g(w) \right)B_\Omega(z,w)dV(w) \nonumber\\
&=P(\chi_U g)(z)\nonumber
 \end{align*}
 Finally we prove that $T_U$ has dense range. Connectedness of $\Omega$ together 
 with the identity principle imply that the operator $R_U$ is  injective. Then we conclude 
 that $T_U$ is injective because if $T_Uf=0$ for some $f\in A^2(\Omega)$, then 
 $\ipr{T_Uf, f}_\Omega= \norm{R_Uf}^2=0$ which, in turn, implies that $f=0$.  Finally, 
 since $T_U$ is injective, it follows that  its adjoint $T_U=(T_U)^*$ has dense range.
 \end{proof}

Proposition \ref{prop-t} shows that  $T_U$ is the \textit{Toeplitz operator } 
on the Bergman space $A^2(\Omega)$ with  symbol $\chi_U$  
(see \cite{zhu_book_2007}). It is clear that $T_U$ is strictly positive. 
 Note that  the operators $R_U, R_U^*, T_U$  are simultaneously 
 compact or non-compact (cf. \cite[Theorems 1.16, 1.17]{zhu_book_2007}). 
 Similarly, the operators  $R_U, R_U^*, T_U$  simultaneously have closed 
 range or do not have closed range. Furthermore, 
 \[ \norm{R_U}=\norm{R^*_U}= \sqrt{\norm{T_U}}.\]

 \subsection{Biholomorphic invariance}
 Let $\phi:\wt{\Omega}\to \Omega$ be a biholomorphic map. 
We can associate an isometric isomorphism of Bergman spaces
\[ \phi^\sharp: A^2(\Omega)\to A^2(\wt{\Omega})\]
by defining for $f\in A^2(\Omega)$ and $w\in \wt{\Omega}$
\begin{equation}\label{eq-inducedmap}
 (\phi^\sharp f)(w)= \det \phi'(w)\cdot  f(\phi(w))
\end{equation}
where $\phi'(w):\cx^n\to \cx^n$ is the $\cx$-linear map which is the 
complex derivative of $\phi$ at $w$.  It follows from the change 
of variables formula that $\phi^\sharp$ is an isometric 
isomorphism of Hilbert spaces. Consequently, we have
\[ (\phi^\sharp)^*\cdot \phi^\sharp = {\rm id}_{A^2(\Omega)}.\]

\begin{proposition}\label{prop-invariance} 
	Let  $\phi:\wt{\Omega}\to \Omega$ be a biholomorphic map, and let $\wt{U}$ 
	is an open subset of $\wt{\Omega}$ such that $\phi(\wt{U})=U$. Then  we have
	\begin{equation}\label{eq-tuinvariance}
	T_{\wt{U}}= \phi^\sharp \circ T_U \circ (\phi^\sharp)^{-1}.
\end{equation}
In particular, the self-adjoint operators $T_U$ and $T_{\wt{U}}$ are isospectral.
\end{proposition}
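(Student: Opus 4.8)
The plan is to reduce the identity \eqref{eq-tuinvariance} to the factorization $T_U=R_U^*R_U$ together with a compatible intertwining of the two restriction operators, after which everything cancels formally. First I would note that $\phi$ restricts to a biholomorphism $\phi|_{\wt{U}}\colon \wt{U}\to U$ (its image is $U$ by hypothesis), and that the same recipe as in \eqref{eq-inducedmap} produces an isometric isomorphism of the Bergman spaces of the \emph{subdomains},
\[ \Psi\colon A^2(U)\to A^2(\wt{U}),\qquad (\Psi g)(w)=\det\phi'(w)\cdot g(\phi(w))\quad (w\in \wt{U}). \]
As for $\phi^\sharp$, the fact that $\Psi$ is an isometric isomorphism follows from the change of variables formula applied to $\phi|_{\wt{U}}$, so $\Psi^*=\Psi^{-1}$ and likewise $(\phi^\sharp)^*=(\phi^\sharp)^{-1}$.

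Next I would establish the key intertwining relation
\[ R_{\wt{U}}\circ \phi^\sharp = \Psi\circ R_U. \]
This is a one-line verification: for $f\in A^2(\Omega)$ and $w\in \wt{U}$ we have $\phi(w)\in U$, so the left side is $(\phi^\sharp f)|_{\wt{U}}(w)=\det\phi'(w)\cdot f(\phi(w))$, while the right side is $\Psi(f|_U)(w)=\det\phi'(w)\cdot f(\phi(w))$; the two agree. Rewriting this as $R_{\wt{U}}=\Psi\, R_U\,(\phi^\sharp)^{-1}$ and taking Hilbert-space adjoints, using unitarity of both $\phi^\sharp$ and $\Psi$, gives $R_{\wt{U}}^*=\phi^\sharp\, R_U^*\,\Psi^{-1}$. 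Composing the two,
\[ T_{\wt{U}}=R_{\wt{U}}^*R_{\wt{U}}=\phi^\sharp\, R_U^*\,\Psi^{-1}\Psi\, R_U\,(\phi^\sharp)^{-1}=\phi^\sharp\, R_U^*R_U\,(\phi^\sharp)^{-1}=\phi^\sharp\circ T_U\circ(\phi^\sharp)^{-1}, \]
the middle factors $\Psi^{-1}\Psi$ cancelling; this is exactly \eqref{eq-tuinvariance}. For the isospectrality statement, \eqref{eq-tuinvariance} exhibits $T_{\wt{U}}$ and $T_U$ as unitarily equivalent operators (conjugation by the unitary $\phi^\sharp$), and unitarily equivalent self-adjoint operators have identical spectra.

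The only point requiring genuine care is the bookkeeping around $\Psi$: one must check that the derivative factor $\det\phi'$ occurring in $\phi^\sharp$ and in $\Psi$ is literally the same function of $w\in\wt{U}$ (so that the intertwining holds on the nose rather than up to a unimodular factor), and that $\Psi$ really is a \emph{unitary} between $A^2(U)$ and $A^2(\wt{U})$ and not merely an isometry onto a subspace. Both follow because $\phi|_{\wt U}$ is a genuine biholomorphism with inverse inducing $\Psi^{-1}$. I expect no obstacle beyond this verification; once it is in place the computation is purely formal. (One could alternatively argue directly from the Toeplitz description $T_U=P(\chi_U\,\cdot)$ of Proposition~\ref{prop-t} by checking that $\phi^\sharp$ intertwines the Bergman projections and that $\chi_{\wt{U}}=\chi_U\circ\phi$, but the factorization through $R_U^*R_U$ is cleaner.)
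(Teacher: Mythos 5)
Your proof is correct and follows essentially the same route as the paper: the paper likewise introduces the induced isometry $\phi^\sharp_U$ (your $\Psi$) on the subdomain Bergman spaces, records the intertwining $R_{\wt{U}}=\phi^\sharp_U\circ R_U\circ(\phi^\sharp)^{-1}$ via a commutative diagram, and obtains \eqref{eq-tuinvariance} by taking adjoints and cancelling $(\phi^\sharp_U)^*\phi^\sharp_U$. Your cautionary remarks about the determinant factor and the unitarity of $\Psi$ are exactly the points the paper handles via the change of variables formula, so nothing is missing.
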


\begin{proof}
 It is clear that the following diagram commutes.
\[
\begin{CD}
A^2(\Omega) @>\phi^\sharp>> A^2(\wt{\Omega})\\
@VVR_UV @VVR_{\wt{U}}V\\
A^2(U) @>\phi^\sharp_U>> A^2(\wt{U})
\end{CD}
\]
where $(\phi^\sharp_U f)(w)=\det \phi'(w)\cdot  f(\phi(w))$ for 
$f\in A^2(U)$ and $w\in \wt{U}$. That is, $\phi^\sharp_U=(\phi|_U)^\sharp$. 
It follows that
$R_{\widetilde{U}}= \phi_U^\sharp \circ R_U\circ (\phi^\sharp)^{-1}  $.
Then 
\begin{align*}
T_{\wt{U}}&=\left( \phi^\sharp_U \circ R_U \circ (\phi^\sharp)^{-1}\right)^* 
\circ \left(\phi^\sharp_U  \circ R_U\circ (\phi^\sharp)^{-1}\right)\\
&=((\phi^\sharp)^{-1})^*\circ R_U^*
\circ(\phi^\sharp_U)^*\circ  \phi^\sharp_U
\circ R_U\circ (\phi^\sharp)^{-1}\\
&= ((\phi^\sharp)^{-1})^*\circ R_U^*\circ R_U\circ (\phi^\sharp)^{-1}\\
&= \phi^\sharp \circ T_U \circ (\phi^\sharp)^{-1}.
\end{align*}
Hence, the proof of the proposition is complete.
\end{proof}

Note that for any measurable subset $U\subset\Omega$, the formula
 \[ T_U =P\circ m_{\chi_U}\]
 (where $P:L^2(\Omega)\to A^2(\Omega)$ is the Bergman projection and 
 $m_{\chi_U}:A^2(\Omega)\to L^2(\Omega)$ is the multiplication operator induced 
 by $\chi_U$)
 defines a positive-definite self-adjoint Toeplitz operator satisfying the invariance property 
 of Proposition \ref{prop-invariance}. Therefore, this allows us to extend the notion of 
spectrum of a subset  from open subsets of $\Omega$ to arbitrary measurable subsets of 
 $\Omega$. However,  since we are motivated by the relation of $T_U$ with the restriction 
 operator $R_U$, we will continue to assume that $U$ is open.

 \subsection{Intrinsic Bergman spaces}\label{SubsectionManifold}
This invariance property is a manifestation of the fact that it is possible 
to define intrinsic versions of the Bergman space and the operators 
$R_U, R_U^*,  T_U$ without any reference to a given volume form $dV$.  

Let $\Omega$ be a complex manifold of dimension $n$. We denote 
the \textit{intrinsic Bergman space} of $\Omega$ by $A^2_{n,0}(\Omega)$. 
This is, by definition the Hilbert space, of those  holomorphic $(n,0)$-forms 
$f$ on $\Omega$ such that
\[ \norm{f}^2_\Omega = (\sqrt{-1})^{n^2}\int_\Omega f \wedge \ol{f} <\infty.\]
If $\Omega$ is a domain in $\cx^n$, the Hilbert space is clearly 
$ A^2_{n,0}(\Omega)$ is   isometrically  isomorphic to $A^2(\Omega)$ 
in a natural way.  If $\iota_U:U\to \Omega$ is the inclusion map, the 
pullback operator $\iota_U^*: A^2_{n,0}(\Omega)\to A^2_{n,0}(U)$  
is defined invariantly, i.e. without any reference to a distinguished 
metric or volume form on $\Omega$. The identification of 
$A^2(\Omega)$ with $A^2_{n,0}(\Omega)$ given above identifies 
the operator $R_U$ with the  invariantly defined operator $\iota_U^*$.  
Similarly, one can give invariant descriptions of $R_U^*, S_U$ and 
$T_U$.	Note that these invariant descriptions immediately show 
that the analytic properties of these operators are invariant under 
biholomorphisms, as seen in Proposition \ref{prop-invariance}.

\section{Compact restriction operators} \label{sec-spectral}

\subsection{Eigenvalues and eigenvectors} \label{sec-purepoint}
In this section we explore some spectral properties of $T_U$ when
there are eigenvalues in the spectrum.

\begin{proposition} \label{cor-eigenvalue}
Let $\Omega$ be a domain in $\cx^n$ with nontrivial 
Bergman space and $U$ be a non-empty {}{open subset} 
of $\D$ such that $\D\setminus \overline{U}\neq \emptyset$.
Suppose that $\lambda$ is an eigenvalue of $T_U$. Then
 \begin{equation} \label{eq-lambdajbounds}
 0<\lambda <1.
 \end{equation}
Furthermore, whenever 0 or 1 is in the spectrum of   $T_U$,    
it belongs to the essential spectrum of $T_U$.
\end{proposition}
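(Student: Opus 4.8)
The plan is to locate $\sigma(T_U)$ inside $[0,1]$ by a quadratic-form estimate, to exclude the two endpoints from the point spectrum, and then to deduce the essential-spectrum statement formally from the decomposition $\sigma(T_U)=\sigma_{\mathrm{ess}}(T_U)\sqcup\sigma_d(T_U)$. I start from the identity
\[
\ipr{T_U f,f}_\Omega=\ipr{R_U f,R_U f}_{A^2(U)}=\int_U\abs{f}^2\,dV,
\qquad f\in A^2(\Omega),
\]
together with $\norm{f}_\Omega^2=\int_\Omega\abs{f}^2\,dV$. Since $\int_U\abs{f}^2\le\int_\Omega\abs{f}^2$, this yields $0\le\ipr{T_U f,f}_\Omega\le\norm{f}_\Omega^2$, i.e. $0\le T_U\le\mathrm{id}$, so that $\sigma(T_U)\subseteq[0,1]$ and in particular every eigenvalue $\lambda$ lies in $[0,1]$.

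It remains to rule out the endpoints. If $\lambda=0$ were an eigenvalue, some $f\neq0$ would satisfy $T_U f=0$, contradicting the injectivity of $T_U$ established in Proposition~\ref{prop-t}; equivalently, $\int_U\abs{f}^2=\ipr{T_U f,f}_\Omega=0$ forces $f\equiv0$ on the non-empty open set $U$, hence on the connected domain $\Omega$ by the identity principle. If $\lambda=1$ were an eigenvalue with eigenvector $f\neq0$, then $\int_U\abs{f}^2=\ipr{T_U f,f}_\Omega=\norm{f}_\Omega^2=\int_\Omega\abs{f}^2$, whence $\int_{\Omega\setminus U}\abs{f}^2=0$. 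This is where the hypothesis $\Omega\setminus\ol{U}\neq\emptyset$ is used: the open set $\Omega\setminus\ol{U}$ is contained in $\Omega\setminus U$, so $f$ vanishes almost everywhere, hence identically, on a non-empty open subset of $\Omega$, and the identity principle again forces $f\equiv0$, a contradiction. Thus $0<\lambda<1$ for every eigenvalue.

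Finally, the essential-spectrum claim is immediate. For a self-adjoint operator the discrete spectrum $\sigma_d(T_U)$ consists exactly of the isolated eigenvalues of finite multiplicity, and $\sigma_{\mathrm{ess}}(T_U)=\sigma(T_U)\setminus\sigma_d(T_U)$. Since the previous paragraph shows that neither $0$ nor $1$ is an eigenvalue of $T_U$ at all, neither can belong to $\sigma_d(T_U)$; hence whenever $0$ or $1$ lies in $\sigma(T_U)$ it must lie in $\sigma_{\mathrm{ess}}(T_U)$. The argument is short, and the only points demanding care are the correct use of the identity principle, namely that a holomorphic function vanishing almost everywhere on a non-empty open subset of a connected domain vanishes identically, and the observation that it is precisely the hypothesis $\Omega\setminus\ol{U}\neq\emptyset$ which furnishes such an open set in the critical case $\lambda=1$.
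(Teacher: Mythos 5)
Your proposal is correct and takes essentially the same approach as the paper: both rest on the identity $\ipr{T_U f,f}_\Omega=\norm{f}_U^2$, use the identity principle together with the hypothesis $\Omega\setminus\ol{U}\neq\emptyset$ to rule out $0$ and $1$ as eigenvalues, and then obtain the essential-spectrum claim formally, since a spectral point that is not an eigenvalue cannot lie in the discrete spectrum of a self-adjoint operator. Your reorganization (first locating $\sigma(T_U)$ in $[0,1]$ via $0\le T_U\le \mathrm{id}$, then excluding the endpoints) is only a cosmetic variant of the paper's direct computation $\lambda=\norm{\phi}_U^2/\norm{\phi}_\Omega^2$ with strict inequalities.
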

\begin{proof} 
Let $\lambda$ be the eigenvalue of $T_U$ and $\phi$ is a 
 corresponding  eigenfunction. Then
\[
 \ipr{\phi,\phi}_U=\ipr{R_U\phi,R_U\phi}_U
=\ipr{R_U^*R_U\phi,\phi}_\Omega
= \ipr{T_U \phi,\phi}
= \lambda\ipr{\phi,\phi}_\Omega.
\]
Then $\lambda = \frac{\norm{\phi}_U^2}{\norm{\phi}_\Omega^2}$.
Using the facts that  $\D\setminus \overline{U}\neq \emptyset$, and 
that $\phi$ is a nonzero holomorphic function, we see that 
	\[ 0< \ipr{\phi,\phi}_U < \ipr{\phi, \phi}_\Omega.\]
It follows that $0<\lambda <1$. So, if 0 or 1 is in the spectrum 
of $T_U$ then it is not an eigenvalue  and, hence, it belongs to 
the essential spectrum of $T_U$.
\end{proof}

We now suppose that $T_U$ has  \textit{pure point spectrum}, i.e., there is 
an orthonormal basis of $A^2(\Omega)$ consisting of eigenvectors of 
$T_U$. If $\{\phi_j\}$ is an orthonormal basis of $A^2(\Omega)$ 
consisting of eigenvectors of $T_U$ and $\lambda_j$ is the 
eigenvalue corresponding to $\phi_j$, then it is clear that 
\[ {\rm Spec}\,T_U = \ol{\{\lambda_j\}_{j=1}^\infty}, \]
the closure of the set of eigenvalues. Furthermore, for each 
$f\in A^2(\Omega)$ we can write,
\begin{equation}\label{eq-tuspectral}
T_Uf=\sum_{j=0}^{\infty}\lambda_j \langle f,\phi_j\rangle \phi_j,
\end{equation}
where  the series converges in the norm of $A^2(\Omega)$.  

In order not to repeat many times  the conclusion  (1)  of 
Proposition \ref{PropCompact}, let us say that 
an orthonormal basis $\{\phi_j\}$ of $A^2(\Omega)$ is  
{\textit{adapted}} to an open subset $U\subset \Omega$, 
if $\{\phi_j|_U\}$ is an orthogonal set in $A^2(U)$, i.e.,  if $j\not=k$, then 
\[ \ipr{\phi_j,\phi_k}_U=\int_{U} \phi_j(z)\ol{\phi_k(z)} dV(z)=0.\]
In the literature, one usually expresses this by saying that the family $\{\phi_j\}$ 
forms a {\em doubly orthogonal sequence of functions} (with respect to the two 
domains $\Omega$ and $U$). Doubly orthogonal systems were  discovered and 
named by Stefan Bergman as far back as 1927  (see 	\cite{BergmanBook}). 
For some recent results related to doubly orthogonal systems see 
\cite{Andersson00,AizenbergAytunaDjakov01,GustafssonPutinarShapiro03,PutinarPutinar06}.

\begin{proposition}\label{prop-adapted}

Let $\Omega$ be a domain in $\cx^n$ with nontrivial Bergman space, 
$U$ be a non-empty open subset of $\D$,  and $\{\phi_j\}$ be an orthonormal 
basis of $A^2(\Omega)$. Then the following are equivalent.
\begin{enumerate}
\item $\{\phi_j\}$  is adapted to $U$,
\item  $T_U$ has pure point spectrum and each $\phi_j$ is an eigenvector of $T_U$. 
\end{enumerate}
If either condition (1) or (2) (and therefore both) are satisfied, the eigenvalue 
$\lambda_j$ of  $T_U$ corresponding to the  eigenvector  $\phi_j$  is given by
	\begin{equation}\label{eq-lambdaj}
	 \lambda_j= \ipr{\phi_j,\phi_j}_U.
	 \end{equation}
\end{proposition}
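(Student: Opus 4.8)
The plan is to prove the equivalence by showing both directions, and I would organize the argument around the formula $\langle T_U \phi_j, \phi_k\rangle_\Omega = \langle \phi_j, \phi_k\rangle_U$, which ties the two conditions together. To see this identity, I would use the relation $T_U = R_U^* R_U$ established in \eqref{eq-tu}: for any $j,k$ we have
\[
\ipr{T_U \phi_j, \phi_k}_\Omega = \ipr{R_U^* R_U \phi_j, \phi_k}_\Omega = \ipr{R_U \phi_j, R_U \phi_k}_U = \ipr{\phi_j, \phi_k}_U,
\]
using that $R_U$ is just restriction. This single computation is the engine of the whole proof, and I expect it to be the central (though not difficult) step.

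For the direction $(1)\Rightarrow(2)$, I would assume $\{\phi_j\}$ is adapted to $U$, so that $\ipr{\phi_j,\phi_k}_U = 0$ for $j\neq k$. By the displayed identity, this says $\ipr{T_U\phi_j, \phi_k}_\Omega = 0$ for all $k\neq j$. Since $\{\phi_k\}$ is an orthonormal basis, this forces $T_U\phi_j$ to be a scalar multiple of $\phi_j$, namely $T_U\phi_j = \lambda_j \phi_j$ with $\lambda_j = \ipr{T_U\phi_j,\phi_j}_\Omega = \ipr{\phi_j,\phi_j}_U$. Thus every $\phi_j$ is an eigenvector, and since the $\phi_j$ form an orthonormal basis, $T_U$ has pure point spectrum. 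This simultaneously establishes the eigenvalue formula \eqref{eq-lambdaj}.

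For the converse $(2)\Rightarrow(1)$, I would assume each $\phi_j$ is an eigenvector, say $T_U\phi_j = \lambda_j\phi_j$. Then for $j\neq k$,
\[
\ipr{\phi_j,\phi_k}_U = \ipr{T_U\phi_j,\phi_k}_\Omega = \lambda_j \ipr{\phi_j,\phi_k}_\Omega = 0,
\]
using the orthonormality of the basis in the last step. This is exactly the adaptedness of $\{\phi_j\}$ to $U$. The same chain with $j=k$ gives $\ipr{\phi_j,\phi_j}_U = \lambda_j\ipr{\phi_j,\phi_j}_\Omega = \lambda_j$, reconfirming \eqref{eq-lambdaj}.

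I do not anticipate a genuine obstacle here, since everything reduces to the bilinear identity above combined with the standard fact that a self-adjoint operator diagonalized by an orthonormal basis has pure point spectrum. The only point requiring a little care is the phrase ``pure point spectrum'': once we know $\{\phi_j\}$ is an orthonormal basis of eigenvectors, the spectral theorem (together with the spectral representation \eqref{eq-tuspectral}) guarantees the closure of $\{\lambda_j\}$ is the full spectrum, so I would simply invoke that. I would present the proof as the two implications above, deriving the eigenvalue formula \eqref{eq-lambdaj} as an immediate byproduct in each direction.
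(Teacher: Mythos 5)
Your proposal is correct and follows essentially the same route as the paper's proof: both hinge on the identity $\ipr{T_U\phi_j,\phi_k}_\Omega=\ipr{R_U\phi_j,R_U\phi_k}_U=\ipr{\phi_j,\phi_k}_U$ coming from $T_U=R_U^*R_U$, and both directions, as well as the eigenvalue formula \eqref{eq-lambdaj}, are derived from it exactly as in the paper. The only cosmetic difference is that you isolate this identity up front as a standalone lemma, while the paper recomputes it inline in each direction.
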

\begin{proof}
First we will prove that (1) implies (2). Assume that 
$\{\phi_j\}$ is {adapted} to $U$. 
Then  for any $j,k$ we have 
\[ \ipr{T_U\phi_j, \phi_k}_\Omega
=  \ipr{R^*_UR_U\phi_j, \phi_k}_\Omega
= \ipr{R_U\phi_j,R_U\phi_k}_\Omega
= \ipr{\phi_j,\phi_k}_U= \ipr{\phi_j,\phi_j}_U\delta_{jk}.\]
Since $\{\phi_j\}$ is an orthonormal basis, this means that 
$T_U\phi_j= \ipr{\phi_j,\phi_j}_U\phi_j$ for each $j$ .
It follows that $\phi_j$ is an eigenvector of $T_U$ with eigenvalue 
$\lambda_j=\ipr{\phi_j,\phi_j}_U$. Since $\{\phi_j\}$ is an orthonormal 
basis of $A^2(\Omega)$ it follows that $T_U$ has pure point spectrum.

To prove the converse, let us assume (2), and so $T_U\phi_j=\lambda_j \phi_j$, 
where $\lambda_j$ is the eigenvalue of $T_U$ corresponding to $\phi_j$.   
Then we have
\begin{equation}\label{Eqn1}
 \ipr{\phi_j,\phi_k}_U=\ipr{R_U\phi_j,R_U\phi_k}_U
=\ipr{R_U^*R_U\phi_j,\phi_k}_\Omega
= \ipr{T_U \phi_j,\phi_k}
= \lambda_j\ipr{\phi_j,\phi_k}_\Omega
=\lambda_j\delta_{jk}
\end{equation}
for all $j$ and $k$. Therefore, the family $\{\phi_j|_U\}$ 
is orthogonal in $A^2(U)$, and $\lambda_j= \ipr{\phi_j,\phi_j}_U$.
\end{proof}

The next proposition,  first proved by Bergman (see \cite{BergmanBook})
was proved by Shapiro (\cite{Shapiro79}) by an   application 
of the spectral theorem for compact self-adjoint operators, 
and describes  the consequences of compactness 
of  $R_U$ (or  equivalently, that of  $T_U= R_U^* R_U$). 

\begin{proposition}[Bergman-Shapiro]\label{PropCompact}
	Let $\Omega$ be a domain in $\cx^n$ with nontrivial Bergman space and 
	$U$ be a non-empty {}{open subset} of $\Omega$. Suppose that the restriction 
	operator $R_U:A^2(\Omega)\to A^2(U)$ is compact. Then
	\begin{enumerate}
		\item there is an orthonormal basis $\{\phi_j\}$ of 
		$A^2(\Omega)$ such that the restrictions $R_U(\phi_j)=\phi_j|_U$  
		form an orthogonal set in $A^2(U)$;
		\item the operator $T_U$ has  the following spectral representation as a series 
		converging in $A^2(\Omega)$. 
		\[T_U f 
		= \sum_{j=1}^\infty \lambda_j \ipr{f,\phi_j}_\Omega \phi_j,\quad (f\in A^2(\Omega))\] 
		where the eigenvalues are given by
		\[ \lambda_j=\ipr{\phi_j,\phi_j}_{U}= \int_U \abs{\phi_j}^2dV.\]
	\end{enumerate}
\end{proposition}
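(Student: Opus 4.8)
The plan is to derive both conclusions from the spectral theorem for compact self-adjoint operators, using Proposition~\ref{prop-adapted} to translate the resulting eigenbasis into the statement about adaptedness. The starting observation is that $T_U = R_U^* R_U$ is a positive, self-adjoint operator on $A^2(\Omega)$ (Proposition~\ref{prop-t}), and that it is \emph{compact}: as recorded after Proposition~\ref{prop-t}, the operators $R_U$, $R_U^*$ and $T_U$ are simultaneously compact, so the hypothesis that $R_U$ is compact forces $T_U$ to be compact as well. I also note that $A^2(\Omega)$ is separable, being a closed subspace of $L^2(\Omega)$, which is separable since $\Omega \subset \rl^{2n}$ carries $\sigma$-finite Lebesgue measure.

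With these facts in hand, first I would apply the spectral theorem for a compact self-adjoint operator on a separable Hilbert space: there exists an orthonormal basis of $A^2(\Omega)$ consisting of eigenvectors of $T_U$, where each nonzero eigenvalue has finite multiplicity and the only possible accumulation point of the eigenvalues is $0$. In general such a basis is obtained by combining the eigenvectors attached to the nonzero eigenvalues with an orthonormal basis of $\ker T_U$. The key point here is that $T_U$ is injective by Proposition~\ref{prop-t}, so $\ker T_U = \{0\}$ and no auxiliary basis of the kernel is needed. Consequently we obtain an orthonormal basis $\{\phi_j\}$ of $A^2(\Omega)$ in which every $\phi_j$ is an eigenvector, say $T_U \phi_j = \lambda_j \phi_j$, with $\lambda_j > 0$ (indeed $0 < \lambda_j < 1$ by Proposition~\ref{cor-eigenvalue} when $\Omega \setminus \overline{U} \neq \emptyset$).

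The existence of this basis is exactly condition~(2) of Proposition~\ref{prop-adapted}, namely that $T_U$ has pure point spectrum with each $\phi_j$ an eigenvector. Invoking that proposition, condition~(1) follows: $\{\phi_j|_U\}$ is an orthogonal family in $A^2(U)$, which is assertion~(1) of the present statement. Moreover, the same proposition supplies the eigenvalue formula $\lambda_j = \ipr{\phi_j, \phi_j}_U = \int_U \abs{\phi_j}^2 \, dV$.

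It remains only to record the spectral representation in~(2). Since $\{\phi_j\}$ is an orthonormal basis, every $f \in A^2(\Omega)$ expands as $f = \sum_j \ipr{f, \phi_j}_\Omega \phi_j$ with convergence in $A^2(\Omega)$; applying the bounded operator $T_U$ term by term and using $T_U \phi_j = \lambda_j \phi_j$ yields
\[ T_U f = \sum_{j=1}^\infty \lambda_j \ipr{f,\phi_j}_\Omega \phi_j, \]
the series converging in the norm of $A^2(\Omega)$. This is precisely~\eqref{eq-tuspectral}. I expect the only genuinely delicate point in the whole argument to be the verification that the eigenvectors span all of $A^2(\Omega)$ rather than merely the orthogonal complement of the kernel; this is where the injectivity of $T_U$ established in Proposition~\ref{prop-t} is essential, and once it is in place the remainder is a direct application of the spectral theorem together with Proposition~\ref{prop-adapted}.
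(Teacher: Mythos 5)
Your proof is correct and follows essentially the same route as the paper's: apply the spectral theorem to the compact, positive, self-adjoint operator $T_U = R_U^*R_U$ to obtain an orthonormal eigenbasis, then invoke the implication (2)~$\Rightarrow$~(1) of Proposition~\ref{prop-adapted} for the adaptedness and the eigenvalue formula $\lambda_j = \ipr{\phi_j,\phi_j}_U$, with the expansion \eqref{eq-tuspectral} giving part~(2). One small remark: the injectivity of $T_U$ is not actually \emph{essential} for the eigenvectors to span $A^2(\Omega)$, since, as you yourself note, an orthonormal basis of $\ker T_U$ would consist of eigenvectors (with eigenvalue $0$) and could simply be adjoined; injectivity merely shows this step is vacuous here.
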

Here, $\ipr{\cdot,\cdot}_\Omega$ (resp. $\ipr{\cdot,\cdot}_U$) denoted 
the inner product of $A^2(\Omega)$ (resp. $A^2(U)$.) We apply 
Proposition \ref{PropCompact} to compute some spectra in highly 
symmetric situations in Examples \ref{ex-reinhardt} and \ref{ex-dilation},
and in Proposition \ref{prop-compactdisc}.

We provide a proof of Proposition \ref{PropCompact} here 
for the convenience of the reader.
\begin{proof}[Proof of Proposition \ref{PropCompact}]
First we will prove (1). Since  $R_U$ is compact, then $T_U=R_U^*R_U$ is a 
compact self-adjoint operator, and by the spectral theorem, 
there is an orthonormal basis $\{\phi_j\}$  of  $A^2(\Omega)$ 
consisting of eigenvectors of $T_U$.  The implication (2) $\Rightarrow $ (1) 
of Proposition \ref{prop-adapted} immediately gives us the result.

Assertion (2) now follows on noting that in the spectral representation 
\eqref{eq-tuspectral}, when the orthonormal basis $\{\phi_j\}$ is {adapted} 
to $U$, the eigenvalues are given by \eqref{eq-lambdaj}.
\end{proof}

\subsection{Some examples}\label{sec-example}
\begin{example}\label{ex-reinhardt}
Let $\mathbb{Z}_+=\{0,1,2,3,\ldots\}$ and suppose that $\Omega$ 
is a complete Reinhardt domain in $\cx^n$ and $U\subset \Omega$ 
is a Reinhardt subdomain. Then for each multi-index 
$\alpha\in \mathbb{Z}_+^n$, we consider the function  
$\phi_\alpha\in A^2(\Omega)$ given by
\[\phi_\alpha(z) = \frac{z^\alpha}{\norm{z^\alpha}_\Omega}.\]
Then $\{\phi_\alpha\}$ is an orthonormal basis of $A^2(\Omega)$. 
Further, since $U$ is also Reinhardt, it follows that 
$\ipr{\phi_\alpha,\phi_\beta}_U=0$ if $\alpha\not=\beta$. 
Hence the orthonormal basis $\{\phi_\alpha\}$ is {adapted} 
to $U$ and the eigenvalues of $T_U$ are given by the ratios 
\begin{equation}\label{eq-eigenvaluealpha}
\lambda_\alpha
= \frac{\norm{z^\alpha}_U^2}{\norm{z^\alpha}_\Omega^2}.
\end{equation}
Therefore, we have 
\begin{equation}\label{eq-reinhardtnorm}
\norm{R_U} 
=\sqrt{\norm{T_U}} 
= \sup_{\alpha\in \mathbb{Z}_+^n}
 \frac{\norm{z^\alpha}_U}{\norm{z^\alpha}_\Omega}.
\end{equation}
 \end{example}
 \begin{example}\label{ex-dilation}
Let us now specialize to the case when $U$ is a dilated version of the 
complete Reinhardt domain $\Omega$, i.e., there is a $0<\rho<1$ 
such that $U=\rho\Omega=\{\rho z| z\in \Omega\}$. Then for each 
multi-index $\alpha \in \mathbb{Z}_+^n$, we have 
\begin{align*}
\norm{z^\alpha}_U^2 &= \int_U \abs{w^\alpha}^2 dV(w)\\
&= \int_\Omega \abs{(\rho z)^\alpha}^2 \rho^{2n}dV(z)\\
&= \rho^{2\abs{\alpha}+2n} \int_\Omega \abs{ z^\alpha}^2 dV(z)\\
&= \rho^{2(\abs{\alpha}+n)} \norm{z^\alpha}_\Omega^2.
\end{align*}
Consequently, the eigenvalues of $T_U$ are
\begin{equation}\label{EqnEV}
\lambda_\alpha = \rho^{2(\abs{\alpha}+n)}, \alpha \in \mathbb{Z}_+^n.
\end{equation}
Therefore, the norm of the restriction operator is given by 
\begin{equation}\label{eq-normru}
\norm{T_U}=\norm{R_U}^2 = \sup_\alpha \lambda_\alpha = \rho^{2n}.
\end{equation}
\end{example}

We can now recapture a classic fact about annuli.
 
\begin{proposition} 
For $0<r_2<r_1$, let $A(r_1,r_2)= \{z\in \cx:r_2<\abs{z}<r_1\}$ be an annulus 
in the plane.  If $A(r_1', r_2')$ is conformally equivalent to $A(r_1, r_2)$, then 
\[\frac{ r_2}{r_1} = \frac{ r_2'}{r_1'}.\]
\end{proposition}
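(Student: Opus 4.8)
The plan is to read off the ratio $r_2/r_1$ as a biholomorphic invariant from the spectrum of a Toeplitz operator $T_U$ attached to a conformally natural subset $U$ of the annulus, and then to invoke the isospectrality of Proposition \ref{prop-invariance}. First I would normalize: the scaling $z\mapsto z/r_1$ is a biholomorphism $A(r_1,r_2)\to A(1,r_2/r_1)$, so composing the given conformal equivalence with such scalings on both sides reduces the claim to the following. If $A(1,\lambda)$ and $A(1,\mu)$ are conformally equivalent, where $\lambda=r_2/r_1$ and $\mu=r_2'/r_1'$ lie in $(0,1)$, then $\lambda=\mu$.

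Next I would fix a conformally natural test set inside $A(1,\lambda)$. The circle $\abs{z}=\sqrt{\lambda}$ is the unique simple closed geodesic of the Poincar\'e metric (it is the fixed-point circle of the inversion isometry $z\mapsto \lambda/z$), and I take $U=\{z:\sqrt{\lambda}<\abs{z}<1\}$, the outer half-annulus. By the rotational (Reinhardt) symmetry of both $A(1,\lambda)$ and $U$, the monomials $\phi_k(z)=z^k/\norm{z^k}_{A(1,\lambda)}$ for $k\in\mathbb{Z}$ give an orthonormal basis of $A^2(A(1,\lambda))$ which, exactly as in Example \ref{ex-reinhardt}, is adapted to $U$ in the sense of Proposition \ref{prop-adapted}. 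The eigenvalues are then the ratios $\lambda_k=\ipr{\phi_k,\phi_k}_U$, and a short telescoping evaluation of $\int r^{2k+1}\,dr$ gives, for every $k\in\mathbb{Z}$,
\[ \lambda_k=\frac{\int_{\sqrt{\lambda}}^{1} r^{2k+1}\,dr}{\int_{\lambda}^{1} r^{2k+1}\,dr}=\frac{1}{1+\lambda^{\,k+1}}. \]
Consequently $T_U$ has pure point spectrum and $\operatorname{Spec} T_U=\ol{\{(1+\lambda^{\,j})^{-1}:j\in\mathbb{Z}\}}$, an explicit countable set accumulating only at $0$ and $1$. This set determines $\lambda$ uniquely; for instance $\lambda=(1-v)/v$, where $v$ is the smallest eigenvalue exceeding $\tfrac12$.

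Finally I would run the invariance argument. A conformal equivalence between hyperbolic domains is a Poincar\'e isometry, hence carries the unique closed geodesic of $A(1,\mu)$ to that of $A(1,\lambda)$, and therefore maps the outer half-annulus $U_\mu$ to one of the two half-annuli of $A(1,\lambda)$. Since the inversion automorphism $z\mapsto\lambda/z$ interchanges the two halves and so makes them isospectral, Proposition \ref{prop-invariance} yields $\operatorname{Spec} T_{U_\mu}=\operatorname{Spec} T_{U_\lambda}$. Comparing the two explicit spectra computed above forces $\lambda=\mu$, that is, $r_2/r_1=r_2'/r_1'$.

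The only genuinely non-formal point, and the step I expect to be the main obstacle, is verifying that the conformal equivalence sends the core circle to the core circle (equivalently, half-annuli to half-annuli), which is what allows the abstract isospectrality of Proposition \ref{prop-invariance} to be matched against the explicitly computable $T_{U_\lambda}$. I would justify this either by the Poincar\'e-isometry remark above, or more concretely by lifting the equivalence through the covering $z=e^{w}$ to the strips $\{\ln\lambda<\Re w<0\}$ and $\{\ln\mu<\Re w<0\}$, noting that the lift must intertwine the deck translation $w\mapsto w+2\pi i$ and hence preserve its invariant geodesic, whose projection is the core circle. Everything else, namely the normalization, the adaptedness of the monomial basis, and the evaluation of the eigenvalue ratios, is routine.
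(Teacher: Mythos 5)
Your proof is correct, but it takes a genuinely different route from the paper. The paper never works with the Bergman space of the annulus at all: it extends the conformal map $f:A(r_1,r_2)\to A(r_1',r_2')$ by repeated Schwarz reflection across the inner circles (after normalizing so that $\abs{f(z)}\to r_1'$ as $\abs{z}\to r_1$), uses the Riemann removable singularity theorem to obtain a biholomorphism of the full discs $\{\abs{z}<r_1\}\to\{\abs{z}<r_1'\}$ carrying the inner disc to the inner disc, and then needs only the single invariant $\norm{T_U}=(r_2/r_1)^2$ from Example \ref{ex-dilation}. You instead stay inside the annuli, take $U$ to be the half-annulus cut off by the core geodesic, and compare full spectra. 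Your route buys something real: it avoids the boundary-behavior and reflection-extension argument entirely, it exercises the paper's spectrum-as-biholomorphic-invariant philosophy on a multiply connected ambient domain, and it extracts more information (the modulus is encoded in the whole countable eigenvalue set $\{(1+\lambda^j)^{-1}\}_{j\in\mathbb{Z}}$, not just in one norm). The costs are that you must import a hyperbolic-geometry fact --- that a conformal equivalence is a Poincar\'e isometry and hence carries the unique closed geodesic to the unique closed geodesic --- and that you must extend Example \ref{ex-reinhardt}, stated for complete Reinhardt domains and $\alpha\in\mathbb{Z}_+^n$, to the Laurent system $\{z^k\}_{k\in\mathbb{Z}}$; both points are standard and your covering-space lift through $z=e^w$ gives a self-contained justification of the geodesic step. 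Your handling of the two-fold ambiguity (which half-annulus the map hits) via the inversion $z\mapsto\lambda/z$ and Proposition \ref{prop-invariance} is exactly right, and the eigenvalue computation checks out, including the logarithmic case $k=-1$, where $\lambda_{-1}=\tfrac12$ agrees with the general formula $\lambda_k=(1+\lambda^{k+1})^{-1}$.

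One small inaccuracy worth fixing: the circle $\abs{z}=\sqrt{\lambda}$ is \emph{not} the fixed-point set of the holomorphic inversion $z\mapsto\lambda/z$, which fixes only the two points $\pm\sqrt{\lambda}$; the circle is merely invariant under it. It is the pointwise fixed set of the \emph{anticonformal} reflection $z\mapsto\lambda/\bar{z}$, or one can simply cite the uniqueness of the simple closed geodesic in a hyperbolic annulus, which your strip-covering argument already establishes. This does not affect the validity of the proof, since the geodesic identification is what the argument actually uses.
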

\begin{proof}
Let $f:A(r_1,r_2)\to A(r_1', r_2')$ be a conformal map, and let us set 
$\Omega= \{z\in \cx:\abs{z}<r_1\}, U= \{z\in \cx:\abs{z}<r_2\}$, 
$\wt{\Omega}= \{z\in \cx:\abs{z}<r_1'\}, \wt{U}= \{z\in \cx:\abs{z}<r_2'\}$. 
Notice that $U=\left(\frac{r_2}{r_1}\right) \Omega$ and 
$\wt{U}=\left(\frac{r_2'}{r_1'}\right) \wt{\Omega}$, so that  thanks 
to \eqref{eq-normru}, we have $\norm{T_U}=\left(\frac{r_2}{r_1}\right) ^2$ 
and $\norm{T_{\wt{U}}}= \left(\frac{r_2'}{r_1'}\right)^2$.

Applying an inversion if necessary, we can assume that as 
$|f(z)|\to r_1'$ as $\abs{z}\to r_1$. We can now apply repeated Schwarz 
reflection in the inner circles, followed by an appeal to the Riemann 
removable singularity theorem to extend the map $f$ to a biholomorphic 
map from $\Omega$ to $\wt{\Omega}$ such that $f(U)=\wt{U}$. 
It follows that $\norm{T_U}=\norm{T_{\wt{U}}}$, and therefore 
$r_2/r_1=r_2'/r_1'$.
\end{proof}

We now compute the spectrum of $T_U$ for an arbitrary relatively compact 
subdisc of the unit disc $\mathbb{D}$. 

\begin{proposition}\label{prop-compactdisc}
Let $z_0\in \cx, r>0,$ 
$A= \sqrt{\frac{(1+ {\abs{z_0}+r})(1- \abs{z_0}+r)}{(1-\abs{z_0}-r)(1+ \abs{z_0}-r)}},$ 
and $U=\{z\in \cx:|z-z_0|<r\}$ such that 
$\overline{U}\Subset \D=\mathbb{D}$. Then the spectrum of 
$T_U$ is composed of eigenvalues $\{\lambda_k\}_{k=0}^\infty$ where
\[ \lambda_k = \left(\frac{A-1}{A+1}\right)^{2k+2}.\]
\end{proposition}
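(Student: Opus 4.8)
The plan is to use the biholomorphic invariance of the spectrum (Proposition~\ref{prop-invariance}) to move $U$ by an automorphism of $\disk$ onto a disc centered at the origin, and then read off the eigenvalues directly from Example~\ref{ex-dilation}. Since the rotations $z\mapsto e^{i\theta}z$ are automorphisms of $\disk$, I would first compose with a suitable one so that the center $z_0$ becomes real and nonnegative; writing $a=\abs{z_0}$, the disc $U$ then meets the real axis in the interval $(a-r,a+r)=:(p,q)$ with $-1<p<q<1$, the last inequality because $\ol{U}\Subset\disk$.

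Next I would exhibit a real Möbius automorphism $\phi_t(z)=\dfrac{z-t}{1-tz}$ of $\disk$ carrying $U$ to a disc centered at the origin. Such a $\phi_t$ preserves the real axis and fixes $\pm1$, and it preserves the reflection symmetry of $\partial U$ about the real axis, so $\phi_t(U)$ is centered at the origin exactly when the endpoints are sent to symmetric points, $\phi_t(p)=-\phi_t(q)$. Clearing denominators, this condition becomes the quadratic
\[
(p+q)\,t^2-2(1+pq)\,t+(p+q)=0,
\]
whose discriminant simplifies to $4(1-p^2)(1-q^2)>0$. The product of its two roots equals $1$, so they are reciprocal reals; hence exactly one of them lies in $(-1,1)$ (when $p+q=0$ the disc is already centered and $t=0$ works). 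For this $t$ the image $\phi_t(U)$ is a disc $\rho\disk$ centered at the origin, with $\rho=\abs{\phi_t(q)}\in(0,1)$.

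The heart of the computation is to identify $\rho$ with $\tfrac{A-1}{A+1}$, and the cleanest route is the invariance of the cross ratio under $\phi_t$. Using $p=a-r$ and $q=a+r$, a direct simplification gives
\[
(p,q;1,-1)=\frac{(1-p)(1+q)}{(1+p)(1-q)}
=\frac{(1-a+r)(1+a+r)}{(1+a-r)(1-a-r)}=A^2 .
\]
Applying $\phi_t$, which sends $p\mapsto-\rho$, $q\mapsto\rho$, $1\mapsto1$, $-1\mapsto-1$ and preserves cross ratios, the same quantity equals
\[
(-\rho,\rho;1,-1)=\left(\frac{1+\rho}{1-\rho}\right)^2 .
\]
Since $q>p$ forces $A^2>1$ and $\rho\in(0,1)$ forces $\tfrac{1+\rho}{1-\rho}>1$, taking positive square roots yields $A=\tfrac{1+\rho}{1-\rho}$, which rearranges to $\rho=\tfrac{A-1}{A+1}$.

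Finally, because $\ol{U}\Subset\disk$ the restriction operator $R_U$ is compact, so $T_U$ has pure point spectrum, and by Proposition~\ref{prop-invariance} it is isospectral to $T_{\phi_t(U)}=T_{\rho\disk}$. Specializing Example~\ref{ex-dilation} to $\Omega=\disk$ and $n=1$, the eigenvalues of $T_{\rho\disk}$ are $\rho^{2(k+1)}$ for $k\in\mathbb{Z}_+$; substituting $\rho=\tfrac{A-1}{A+1}$ gives $\lambda_k=\left(\tfrac{A-1}{A+1}\right)^{2k+2}$, as claimed. The only genuine work is checking that the symmetrizing parameter $t$ exists in $(-1,1)$ and carrying out the cross-ratio simplification; both are elementary, so I expect the main (mild) obstacle to be the algebraic bookkeeping of the cross ratio rather than any conceptual difficulty.
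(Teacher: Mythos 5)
Your proof is correct, and it follows the same overall skeleton as the paper's: reduce to a disc centered at the origin via an automorphism of $\mathbb{D}$, invoke Proposition~\ref{prop-invariance} for isospectrality, and read the eigenvalues $\rho^{2k+2}$ off Example~\ref{ex-dilation}. The difference lies entirely in how the radius $\rho$ of the congruent centered disc is computed. The paper works in hyperbolic geometry: it observes that $U$ is a hyperbolic disc, computes its hyperbolic radius as $\log A$ using $\eta(0,z)=\log\frac{1+\abs{z}}{1-\abs{z}}$ (with a case split according to whether $\abs{z_0}\geq r$ or $\abs{z_0}<r$, i.e.\ whether $0\in U$), and then quotes the transitivity of ${\rm Aut}(\mathbb{D})$ on hyperbolic discs of equal radius, converting back via $\rho=\frac{e^{\log A}-1}{e^{\log A}+1}=\frac{A-1}{A+1}$. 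You instead construct the symmetrizing automorphism $\phi_t$ explicitly --- your quadratic $(p+q)t^2-2(1+pq)t+(p+q)=0$ with discriminant $4(1-p^2)(1-q^2)$ and reciprocal roots is right, as is the uniqueness of the root in $(-1,1)$ --- and extract $\rho$ from invariance of the cross ratio $(p,q;1,-1)=A^2=\bigl(\frac{1+\rho}{1-\rho}\bigr)^2$. This buys two things: the congruence the paper imports from hyperbolic geometry is proved rather than quoted, and the cross-ratio computation handles the cases $0\in U$ and $0\notin U$ uniformly, whereas the paper must treat $\frac{1}{2}(\eta(0,P)\pm\eta(0,Q))$ separately; the cost is the extra bookkeeping of verifying that $\phi_t(U)$ is centered (your symmetry argument for this is fine, since the pole $1/t$ lies outside $\ol{U}$ and the real-axis chord of the conjugation-symmetric image disc is a diameter). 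One shared gloss, not a defect relative to the paper: since $T_U$ is compact on the infinite-dimensional $A^2(\mathbb{D})$, the spectrum is strictly the closure of the eigenvalue set, so it also contains $0$ as a limit point --- the paper's statement elides this in exactly the same way.
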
 	
	
\begin{proof}
Recall that the distance in the Poincaré hyperbolic metric  of the disc  
(suitably normalized) from the origin to the point $z\in \disk$ is given by
\[ \eta(0,z) = \log\frac{1+\abs{z}}{1-\abs{z}}.\] 
It follows that the disc $\{z\in \cx:\abs{z}<r\}$ is a hyperbolic disc centered at the 
origin and of radius $\eta(0,r)$. Since any two hyperbolic discs  of the same 
radius are congruent under ${\rm Aut}(\mathbb{D})$, it follows from 
\eqref{EqnEV}  that the spectrum of a hyperbolic disk of radius $\rho$ 
consists of the points $\{r^{2k+2}, k=0,1,\dots\}$, where $r>0$ is such 
that $\rho= \eta(0,r)$. A computation using the formula for $\eta$ shows that
\[ r = \frac{e^\rho-1}{e^\rho+1}.\]
Now consider the disc $U=\{z\in \cx:|z-z_0|<r\}$.  Let $P$ and $Q$ be the points 
at which the diameter through $z_0$ meets the circumference of $U$, so that 
$\abs{P}=\abs{z_0}+r$ and $\abs{Q}=\abs{\abs{z_0}-r}$ (if $z_0=0$, take any 
diameter). If $\abs{z_0}\geq r$, then $0\not \in U$, and the hyperbolic radius 
of $U$ (thought of as a hyperbolic disc) is given by
\begin{align*}
	 \frac{1}{2}\left( \eta(0,P)- \eta(0,Q)\right)
	 &=\frac{1}{2} \log\left(\frac{1+\abs{\abs{z_0}+r}}{1-\abs{\abs{z_0}+r}}
	 \cdot\frac{1- \abs{\abs{z_0}-r}}{1+\abs{\abs{z_0}-r}}  \right)\\
	 &=\frac{1}{2} \log\left(\frac{1+\abs{z_0}+r}{1-\abs{z_0}-r}\cdot
	 \frac{1- \abs{z_0}+r}{1+\abs{z_0}-r}  \right)\\
	   &= \log A.
 \end{align*}
Now, if $\abs{z_0}< r$, then $0\in U$, and in this case, the hyperbolic radius 
of $U$ is given by
\begin{align*} 
	\frac{1}{2}\left( \eta(0,P)+ \eta(0,Q)\right)
	&=\frac{1}{2}\log\left(\frac{1+\abs{\abs{z_0}+r}}{1-\abs{\abs{z_0}+r}}\cdot
	\frac{1+\abs{\abs{z_0}-r}}{1-\abs{\abs{z_0}-r}}  \right)\\
	 &=\frac{1}{2} \log\left(\frac{1+\abs{z_0}+r}{1-\abs{z_0}-r}\cdot
	 \frac{1 -\abs{z_0}+r}{1+ \abs{z_0}-r}  \right)\\ 
	&= \log A.
\end{align*}
Hence the proof of the proposition is complete. 
\end{proof}
	
\section{Complementary  domains and  proof of Theorem~\ref{thm-basic}}
\subsection{Spectral properties of complementary domains}
\begin{proposition} \label{prop-eigen}
	Let $\Omega$ be a domain in $\cx^n$ with nontrivial Bergman space and 
	$U$ be a non-empty {}{open subset} of $\D$ 
	such that $\D\setminus \overline{U}\neq \emptyset$. 
	Assume that $\partial U$ has zero Lebesgue measure. Then 
	\begin{enumerate}
	\item $T_U$ and $T_{\Omega\setminus \ol{U}}$ commute as 
	operators on $A^2(\Omega)$ and 
\begin{equation}\label{eq-spectrumrprime}
 \mathrm{ Spec}(T_{\Omega\setminus \ol{U}})
 =  \{1-\lambda| \lambda \in  \mathrm{ Spec}(T_{U})\}, 
\end{equation}
\item if there is an orthonormal basis $\{\phi_j\}$ of 
$A^2(\Omega)$ adapted to $U$, this basis is also adapted to 
$\Omega \setminus \ol{U}$.
\end{enumerate}
\end{proposition}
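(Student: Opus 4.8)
The plan is to reduce both parts to a single operator identity, namely $T_{\Omega\setminus\overline{U}} = I - T_U$ on $A^2(\Omega)$, where $I$ is the identity operator. Since $\overline{U}$ is closed, the set $\Omega\setminus\overline{U}$ is open, so Proposition \ref{prop-t} applies and gives $T_{\Omega\setminus\overline{U}}g = P(\chi_{\Omega\setminus\overline{U}}g)$ for $g\in A^2(\Omega)$. Now $U$, $\partial U\cap\Omega$, and $\Omega\setminus\overline{U}$ partition $\Omega$, so that $\chi_U + \chi_{\partial U} + \chi_{\Omega\setminus\overline{U}} = 1$ pointwise on $\Omega$; the hypothesis that $\partial U$ has zero Lebesgue measure then yields $\chi_U + \chi_{\Omega\setminus\overline{U}} = 1$ almost everywhere. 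Consequently, using linearity of $P$ together with $P(f)=f$ for $f\in A^2(\Omega)$,
\[ T_{\Omega\setminus\overline{U}}f = P\bigl((1-\chi_U)f\bigr) = f - P(\chi_U f) = f - T_U f, \]
which is the desired identity.

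Granting this identity, part (1) becomes essentially formal. Commutativity is immediate, since $T_U(I-T_U) = T_U - T_U^2 = (I-T_U)T_U$. For the spectral statement I would use the elementary observation that $T_U - \lambda I$ and $(I-T_U) - (1-\lambda)I = -(T_U - \lambda I)$ are invertible or not simultaneously; hence $\lambda\in\mathrm{Spec}(T_U)$ if and only if $1-\lambda\in\mathrm{Spec}(I-T_U)=\mathrm{Spec}(T_{\Omega\setminus\overline{U}})$, which is exactly \eqref{eq-spectrumrprime}.

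For part (2) I would argue directly. Suppose $\{\phi_j\}$ is an orthonormal basis adapted to $U$. For $j\neq k$ we have $\ipr{\phi_j,\phi_k}_\Omega=0$ by orthonormality and $\ipr{\phi_j,\phi_k}_U=0$ by adaptedness. The almost-everywhere identity $\chi_U+\chi_{\Omega\setminus\overline{U}}=1$ lets us split the integral over $\Omega$ as $\ipr{\phi_j,\phi_k}_\Omega = \ipr{\phi_j,\phi_k}_U + \ipr{\phi_j,\phi_k}_{\Omega\setminus\overline{U}}$, so that $\ipr{\phi_j,\phi_k}_{\Omega\setminus\overline{U}}=0$; this is precisely adaptedness to $\Omega\setminus\overline{U}$. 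Alternatively, one notes that each $\phi_j$ is an eigenvector of $T_U$ by Proposition \ref{prop-adapted}, hence an eigenvector of $I-T_U = T_{\Omega\setminus\overline{U}}$, and invokes the converse direction of that proposition.

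I expect the only real content to lie in the measure-zero hypothesis on $\partial U$: it is exactly what forces $\chi_U$ and $\chi_{\Omega\setminus\overline{U}}$ to be complementary almost everywhere, and thereby produces the clean identity $T_{\Omega\setminus\overline{U}}=I-T_U$. Without it the characteristic functions would differ on the positive-measure set $\partial U$, the operator identity would acquire an extra term $T_{\partial U}$, and both conclusions would fail. Everything downstream of this observation is routine.
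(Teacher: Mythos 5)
Your proposal is correct and follows essentially the same route as the paper: both derive the identity $T_{\Omega\setminus\overline{U}} = \mathrm{id} - T_U$ from Proposition~\ref{prop-t} together with $\chi_U + \chi_{\Omega\setminus\overline{U}} = 1$ almost everywhere (which is where the measure-zero boundary hypothesis enters), deduce commutativity and the spectral relation \eqref{eq-spectrumrprime} from it, and prove part (2) by splitting $\ipr{\phi_j,\phi_k}_\Omega = \ipr{\phi_j,\phi_k}_U + \ipr{\phi_j,\phi_k}_{\Omega\setminus\overline{U}}$. Your explicit spectral-mapping step and the alternative argument for (2) via Proposition~\ref{prop-adapted} are fine elaborations of details the paper leaves implicit.
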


\begin{proof} 
	By Proposition~\ref{prop-t} we see that for each $f\in A^2(\Omega)$  
	we have since $\chi_U+ \chi_{\Omega \setminus \ol{U}}=1$ a.e. 
\begin{align*}
T_U f(z)+ T_{\Omega\setminus \ol{U}}f(z)
&= P(\chi_U f) + P(\chi_{\Omega \setminus \ol{U}}f)\\
&= P((\chi_U+ \chi_{\Omega \setminus \ol{U}})f)\\
&= P(f)\\
&= f.
\end{align*}
Then we have
\[ T_{\Omega\setminus\ol{U}}= {\rm id}_{A^2(\Omega)}-T_U.\]
 It follows that $T_U$ and $T_{\Omega\setminus \ol{U}}$ 
 commute. Consequently
$ \mathrm{ Spec}(T_{\Omega\setminus \ol{U}})= \mathrm{ Spec}(I- T_U)$ 
and  \eqref{eq-spectrumrprime} follows.

Assuming that $\{\phi_j\}$ is {adapted} to $U$, the fact 
that it is {adapted} 
to $\Omega\setminus \ol{U}$ follows on writing 
\[ \ipr{\phi_j,\phi_k}_\Omega=\ipr{\phi_j,\phi_k}_U
+ \ipr{\phi_j,\phi_k}_{\Omega\setminus \ol{U}}.\]
Hence the proof of the proposition is complete. 
\end{proof}

Next we prove Theorem \ref{thm-basic}.
\begin{proof}[Proof of Theorem \ref{thm-basic}]
First consider part (1). Assume that $R_U$ is compact. We observe 
that since $\norm{R_U}=\sqrt{\norm{T_U}}$ and $\norm{T_U}$ is 
the largest eigenvalue of $T_U$, which is strictly less than 1 
by \eqref{eq-lambdajbounds}. Hence, we conclude that $\norm{R_U}<1$. 
 
To prove that $\|R_{\Omega\setminus\ol{U}}\|_e=1$,  note that the 
operator $T_U$ is compact since $R_U$ is compact. Then 0 is an essential 
point of the spectrum of $T_U$, by  Proposition~\ref{cor-eigenvalue}. 
Let $\{\phi_j\}$ be an orthonormal basis of $A^2(\Omega)$ consisting 
of eigenvectors of $T_U$, and let $\lambda_j$ be the eigenvalue of 
$T_U$ corresponding to the eigenvector $\phi_j$. It now follows from 
Proposition \ref{prop-eigen}  that the operator $T_{\Omega\setminus \ol{U}}$ 
has pure point spectrum, and $\phi_j$ is an eigenvector  with corresponding 
eigenvalue $1-\lambda_j$.   It follows that 1 is an essential point of the 
spectrum of $T_{\Omega\setminus \ol{U}}$, so that 
$\|T_{\Omega\setminus \ol{U}}\|_e=1$. Therefore, by 
\cite[Lemma 1]{CuckovicSahutoglu18} it follows that 
$\|R_{\Omega\setminus \ol{U}}\|_e=1$, and consequently,  
we have  $\|R_{\Omega\setminus \ol{U}}\|=1$.

We prove part (2) of Theorem \ref{thm-basic} in a slightly stronger form 
below in Proposition \ref{prop-closedrange}.  
\end{proof}	

\begin{proposition} \label{prop-closedrange}
Let $\Omega$ be a domain in $\cx^n$ with nontrivial Bergman space 
and  $U$ be a non-empty {}{open subset} of $\D$ such that 
$\D\setminus \overline{U}\neq \emptyset$ and $\partial U$ 
has zero Lebesgue measure. Then the following are equivalent
\begin{enumerate}
\item  $\norm{R_{U}}<1$,
\item $R_{\Omega\setminus\ol{U}}$ has closed range,
\item $T_{\Omega\setminus \ol{U}}$ is a linear homeomorphism. 
\end{enumerate}
\end{proposition}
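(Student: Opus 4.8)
The plan is to make everything hinge on the operator identity $T_{\Omega\setminus\ol{U}} = \mathrm{id}_{A^2(\Omega)} - T_U$, which Proposition~\ref{prop-eigen} supplies precisely because the hypothesis that $\partial U$ has zero Lebesgue measure forces $\chi_U + \chi_{\Omega\setminus\ol{U}} = 1$ almost everywhere. Recall that $T_U$ is positive and self-adjoint, and that $\ipr{T_U f, f}_\Omega = \norm{R_U f}_U^2 \leq \norm{f}_\Omega^2$, so $0 \leq T_U \leq \mathrm{id}$; hence $\mathrm{Spec}(T_U) \subseteq [0,1]$ and, crucially, $\norm{T_U}$ equals the largest point of $\mathrm{Spec}(T_U)$.

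First I would prove the equivalence of (1) and (3). Since $\norm{R_U} = \sqrt{\norm{T_U}}$, condition (1) reads $\norm{T_U} < 1$, which for a positive self-adjoint operator is the same as saying $1 \notin \mathrm{Spec}(T_U)$, the norm being the top of the spectrum. By \eqref{eq-spectrumrprime} we have $\mathrm{Spec}(T_{\Omega\setminus\ol{U}}) = \{1 - \lambda : \lambda \in \mathrm{Spec}(T_U)\}$, so $1 \notin \mathrm{Spec}(T_U)$ is equivalent to $0 \notin \mathrm{Spec}(T_{\Omega\setminus\ol{U}})$, i.e. to the invertibility of $T_{\Omega\setminus\ol{U}}$. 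As a bounded bijection of Hilbert spaces is automatically a homeomorphism by the bounded inverse theorem, invertibility is exactly condition (3).

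Next I would establish the equivalence of (2) and (3). Here I would invoke the fact, recorded after Proposition~\ref{prop-t}, that $R_{\Omega\setminus\ol{U}}$ and $T_{\Omega\setminus\ol{U}} = R_{\Omega\setminus\ol{U}}^* R_{\Omega\setminus\ol{U}}$ have closed range simultaneously, so that (2) is equivalent to $T_{\Omega\setminus\ol{U}}$ having closed range. The key structural input is that $T_{\Omega\setminus\ol{U}}$ is injective with dense range: this follows from Proposition~\ref{prop-t} applied to the non-empty open set $\Omega\setminus\ol{U}$ (the identity principle gives injectivity of $R_{\Omega\setminus\ol{U}}$, whence of $T_{\Omega\setminus\ol{U}}$, and self-adjointness forces dense range). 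For an operator that is injective with dense range, having closed range upgrades to being onto, since a dense closed subspace is the whole space; combined with injectivity this produces a bounded bijection, that is, a linear homeomorphism. The reverse implication from (3) to (2) is immediate, since a homeomorphism is surjective and hence has closed range.

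The argument requires no new estimates; it is bookkeeping around the identity $T_{\Omega\setminus\ol{U}} = \mathrm{id} - T_U$ together with the closed-range dictionary for $R$, $R^*$, and $T$. The one point deserving care is verifying the hypotheses of the two standard facts used: that $T_U$ is positive self-adjoint, so that its norm is the supremum of its spectrum, and that $T_{\Omega\setminus\ol{U}}$ is genuinely injective with dense range, so that closed range implies surjectivity. Both are already in place from the earlier propositions, so I expect no serious obstacle.
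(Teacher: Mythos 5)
Your proposal is correct and takes essentially the same approach as the paper: both arguments rest on the identity $T_{\Omega\setminus\ol{U}} = \mathrm{id}_{A^2(\Omega)} - T_U$ and the resulting spectral relation \eqref{eq-spectrumrprime} from Proposition~\ref{prop-eigen}, together with the injectivity and dense-range facts of Proposition~\ref{prop-t} applied to the open set $\Omega\setminus\ol{U}$. The only difference is organizational: the paper runs the cycle $(1)\Rightarrow(2)\Rightarrow(3)\Rightarrow(1)$ with explicit coercivity constants (e.g.\ the bound $\norm{R_{\Omega\setminus\ol{U}}f}_{\Omega\setminus\ol{U}} \geq \sqrt{1-\norm{R_U}^2}\,\norm{f}_\Omega$), while you package the same spectral information as two equivalences pivoting on invertibility of $T_{\Omega\setminus\ol{U}}$, which is a harmless repackaging.
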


\begin{proof}
Assume (1), i.e.,  $\norm{R_U}<1$.  Now  $\norm{T_U}= \norm{R_U}^2$ 
is the maximum of the spectrum of $T_U$, so that 
$\mathrm{ Spec}(T_U)\subset [0, \norm{R_U}^2]$.  
From \eqref{eq-spectrumrprime} we see that 
$\mathrm{ Spec}(T_{\Omega\setminus\ol{U}})
\subset \left[1-\norm{R_U}^2, 1\right].$ 
Then, by the spectral theorem (see, for example, \cite[275f.]{riesz-nagy}) 
we have for  each $f\in A^2(\Omega)$ we have
\[ \ipr{T_{\Omega\setminus\ol{U}}f, f}_\Omega 
\geq (1-\norm{R_U}^2) \norm{f}_\Omega^2,\]
which implies that
\begin{equation}\label{eq-rprime}
\norm{R_{\Omega\setminus\ol{U}} f}_{\Omega\setminus \ol{U}} 
\geq \sqrt{1-\norm{R_U}^2} \norm{f}_\Omega.
\end{equation}
Hence, $R_{\Omega\setminus\ol{U}}$ has closed range, i.e. (2).

Assume (2) now.  Thanks to Proposition~\ref{prop-t}, we already know 
that $T_U$ is injective and has dense range, so it suffices to show that 
$T_{\Omega\setminus\ol{U}}$ has closed range.  Now since by hypothesis,
$R_{\Omega\setminus\ol{U}}$ has closed range and we know it is injective, 
it follows from the open mapping theorem that there is a a $C>0$ such that 
$\norm{R_{\Omega\setminus\ol{U}}f}\geq C \norm{f}$ for all $f\in A^2(\Omega)$. 
This is equivalent to the condition that 
\begin{equation}\label{eq-tlb}
\ipr{T_{\Omega\setminus\ol{U}} f, f}_\Omega
= \ipr{(R_{\Omega\setminus\ol{U}})^* R_{\Omega\setminus\ol{U}} f, f}_\Omega 
\geq C^2\norm{f}_\Omega^2.
\end{equation}
Since $T_{\Omega\setminus\ol{U}}$ is self-adjoint, this means that 
$\norm{T_{\Omega\setminus\ol{U}}f}\geq C \norm{f}$ for each 
$f\in A^2(\Omega)$, which shows that $T_{\Omega\setminus\ol{U}}$ 
has closed range, thus completing the proof of (3). 

Now suppose (3) holds,  so that   $T_{\Omega\setminus\ol{U}}$ has  
closed range, which is equivalent to \eqref{eq-tlb}. It follows that  
$\mathrm{ Spec}(T_{\Omega\setminus\ol{U}}) \subset [C,1]$. 
Using \eqref{eq-spectrumrprime} we see that 
$\mathrm{ Spec}(T_U)\subset [0, 1-C]$.  Consequently, 
\[ \norm{R_U}^2 = \norm{T_U} \leq 1-C<1.\]
Hence, the proof of Proposition  \ref{prop-closedrange} is complete.
 \end{proof}

\section{Norm estimates on tangent domains and proof of Theorem \ref{thm-balls}}
	\label{sec-norm}
\subsection{Planar situation}
In the case when the dimension $n=1$, the following result gives the norm 
of the restriction operator on tangent domains.

\begin{proposition} \label{prop-generalplanar} 
Let $\Omega$ be a  bounded domain in $\cx$ and $U$  
be a non-empty open subset of $\D$. Suppose that there is a 
$p\in \partial\Omega\cap \partial U$ near which $\partial\Omega$ 
and $\partial U$ are $C^1$-smooth. Then  $\norm{R_U}=1$.
\end{proposition}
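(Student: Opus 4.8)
The plan is to prove $\norm{R_U}=1$ by establishing the two inequalities $\norm{R_U}\le 1$ and $\norm{R_U}\ge 1$ separately. The upper bound is immediate: for $f\in A^2(\Omega)$ one has $\norm{R_Uf}_U^2=\int_U\abs{f}^2\,dV\le\int_\Omega\abs{f}^2\,dV=\norm{f}_\Omega^2$. For the lower bound, recall from Proposition~\ref{prop-t} that $T_U=R_U^*R_U$ is positive and self-adjoint, so
\[ \norm{R_U}^2=\norm{T_U}=\sup_{0\ne f\in A^2(\Omega)}\frac{\ipr{T_Uf,f}_\Omega}{\norm{f}_\Omega^2}=\sup_{0\ne f\in A^2(\Omega)}\frac{\int_U\abs{f}^2\,dV}{\int_\Omega\abs{f}^2\,dV}. \]
Thus it suffices to produce a sequence $f_k\in A^2(\Omega)$ with $\ds\int_{\Omega\setminus\ol U}\abs{f_k}^2\,dV\Big/\int_\Omega\abs{f_k}^2\,dV\to0$; in words, the $L^2$-mass of $f_k$ must concentrate inside $U$.

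Next I would set up local coordinates. After a translation and rotation, put $p=0$ with the common tangent line equal to the real axis; since $U\subset\Omega$ and both boundaries are $C^1$ through $p$, the two curves must be tangent there, so near $0$ we may write $\Omega=\{y<\phi_\Omega(x)\}$ and $U=\{y<\phi_U(x)\}$ with $\phi_U\le\phi_\Omega$, where $\phi_\Omega,\phi_U$ are $C^1$ and $\phi_\Omega(0)=\phi_U(0)=\phi_\Omega'(0)=\phi_U'(0)=0$. In particular the gap $\Omega\setminus\ol U$ has, near $0$, height $\phi_\Omega(x)-\phi_U(x)=o(\abs{x})$, and the outward normal to $\Omega$ at $p$ points in the direction $+i$. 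As test functions I would take the single-pole rational functions $f_k(z)=1/(z-i\ep_k)$ with $\ep_k\downarrow 0$: the pole $i\ep_k$ lies just outside $\ol\Omega$, so each $f_k$ is holomorphic on $\Omega$ (no branch-cut issues arise), while $\abs{f_k}^2$ blows up precisely at $p$, forcing the mass of $f_k$ to accumulate there.

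The estimates then proceed as follows. For the denominator, both $\Omega$ and $U$ contain a fixed truncated cone about the inner normal near $p$ (a consequence of $C^1$-smoothness), and integrating $\abs{f_k}^2$ over this cone yields $\int_\Omega\abs{f_k}^2\,dV\gtrsim\log(1/\ep_k)\to\infty$. For the numerator I would split $\Omega\setminus\ol U$ into the part outside a small ball $B(p,\delta)$, on which $\abs{f_k}^2$ is bounded so that the contribution is bounded independently of $k$, and the thin gap inside $B(p,\delta)$. On the latter, using $\abs{z-i\ep_k}^{-2}\le x^{-2}$ in general and the sharper bound $\abs{z-i\ep_k}^{-2}\lesssim\ep_k^{-2}$ on the subregion where $\abs{x}\le\ep_k$, together with the height bound $o(\abs{x})$ and a split at $\abs{x}=\ep_k$, one finds the gap contribution is $o(\log(1/\ep_k))$. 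Dividing, the ratio tends to $0$, so $\int_U\abs{f_k}^2/\int_\Omega\abs{f_k}^2\to1$, giving $\norm{R_U}\ge1$ and hence equality.

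The main obstacle is precisely that the gap $\Omega\setminus\ol U$ accumulates at the very point $p$ where the test functions concentrate, so any crude bound on the gap is useless; the decisive input is the $C^1$-tangency, which forces the gap width to be $o(\abs{x})$ and thereby renders its weighted mass negligible, namely $o(\log(1/\ep_k))$, against the logarithmically divergent total mass $\asymp\log(1/\ep_k)$. One could instead run the same argument with the normalized Bergman kernels $k_{w_k}=B_\Omega(\cdot,w_k)/\sqrt{B_\Omega(w_k,w_k)}$ for $w_k\to p$ along the inner normal, but the explicit rational functions $f_k$ keep the concentration estimate entirely self-contained.
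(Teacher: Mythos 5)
Your proof is correct, but it follows a genuinely different route from the paper's. The paper works with sectors: after placing the tangency point at the origin with the outward normal along the negative real axis, it sandwiches
\[ U_{\ep,\delta} \subset U\cap \mathbb{D}_\delta \subset \Omega\cap\mathbb{D}_\delta \subset V_{\ep,\delta},\]
where $U_{\ep,\delta}$ and $V_{\ep,\delta}$ are sectors of radius $\delta$ and apertures $\pi-2\ep$ and $\pi+2\ep$, and then uses the fractional powers $f_j(z)=a_j z^{-\alpha_j}$ with $\alpha_j=1-2^{-j}\uparrow 1$, normalized so that $\norm{f_j}^2_{V_{\ep,\delta}}=1$; an explicit computation gives $\norm{f_j}^2_{U_{\ep,\delta}}=\frac{\pi-2\ep}{\pi+2\ep}$ and $\norm{f_j}^2_\Omega\leq 1+\ep$ for $j$ large, so the angle ratio tending to $1$ does all the work. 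You instead take the rational functions $1/(z-i\ep_k)$ with poles receding along the outward normal, obtain the logarithmic divergence $\int_\Omega\abs{f_k}^2\,dV\gtrsim \log(1/\ep_k)$ from a cone inside $\Omega$, and kill the complement using the quantitative form of tangency, namely the gap width $\phi_\Omega(x)-\phi_U(x)=o(\abs{x})$; your splits at $\abs{x}=\ep_k$ and at $\partial B(p,\delta)$ check out (on the gap with $\abs{x}\leq \ep_k$ one has $y=o(\ep_k)$, hence $\abs{z-i\ep_k}\geq \ep_k/2$ for large $k$, and $\int_{\ep_k}^{\delta}\eta(x)x^{-1}dx=o(\log(1/\ep_k))$ whenever $\eta(t)\to 0$). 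Both arguments are driven by the $C^1$ tangency, but in different guises: the paper uses only the angular information (aperture tending to $\pi$ on both sides), which buys a clean closed-form computation with no asymptotic bookkeeping, whereas you use the $o(\abs{x})$ gap estimate, which buys single-valued test functions — your $f_k$ are rational, so you sidestep entirely the choice of a branch of $z^{-\alpha_j}$, which the paper's $f_j$ quietly requires and which needs a word when the bounded domain $\Omega$ meets the outward normal ray away from $p$. Two small points to tighten: since this proposition does not assume $\partial U$ has zero Lebesgue measure globally, you should estimate $\int_{\Omega\setminus U}\abs{f_k}^2\,dV$ rather than $\int_{\Omega\setminus \ol{U}}\abs{f_k}^2\,dV$ before dividing — your estimates already deliver this, because $\abs{f_k}$ is bounded on $\Omega\setminus B(p,\delta)$ and inside $B(p,\delta)$ the set $\Omega\setminus U$ differs from the strip between the two graphs only by the null set $\partial U\cap B(p,\delta)$; and you should justify in a line that $U$ lies locally on the \emph{lower} side of its boundary curve (if it lay above, points $(0,t)$ with $t>0$ small would belong to $U\setminus\Omega$, contradicting $U\subset\Omega$).
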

Note that thanks to Proposition \ref{PropCompact} this means that $R_U$ is non-compact. 
\begin{proof} 
Without loss of generality we assume that the origin is a boundary point 
and the negative $x$-axis is the outward  normal at the origin. Then 
for $\ep>0$ small we can find $\delta>0$ so that 
\[U_{\ep,\delta} \subset U\cap \mathbb{D}_{\delta} 
\subset \D\cap \mathbb{D}_{\delta} \subset V_{\ep,\delta}.\]
where $\mathbb{D}_{\delta}=\{z\in \cx:|z|<\alpha\}$, 
\[U_{\ep,\delta} 
=\left\{re^{i\theta}:|\theta|<\frac{\pi}{2}-\ep, 0<r<\delta\right\}, 
\text{ and  }  V_{\ep,\delta}
=	 \left\{re^{i\theta}:|\theta|<\frac{\pi}{2}+\ep, 0<r<\delta\right\}.\]
Let us choose $f_j(z)=a_jz^{-\alpha_j}$ where $\alpha_j =1-2^{-j}$ and 
$a_j=\delta^{\alpha_j-1}\sqrt{\frac{2-2\alpha_j}{\pi+2\ep}}$. Then one can 
compute that $a_j\to 0$ and $ \|f_j\|^2_{V_{\ep,\delta}}=1$. 
The fact that $a_j\to 0$ implies that $f_j\to 0$ on any compact set away 
from the origin. That is, the mass of $f_j$ accumulates near the origin as 
$j\to\infty$.  Furthermore, one can compute that 
$\|f_j\|^2_{U_{\ep,\delta}}=\frac{\pi-2\ep}{\pi+2\ep}$. 
Hence for every $\ep>0$ there exists $j$ such that 
\[\frac{\pi-2\ep}{\pi+2\ep} \leq \|f_j\|^2_{U}\leq \|f_j\|^2_{\D}\leq 1+\ep.\]
Therefore, $\|R_U\|=1$.	 
\end{proof}
\subsection{Higher dimensional situation}
The following example and Theorem \ref{thm-balls}, show that the situation in higher 
dimensions is different.

\begin{example} \label{ex-hartogsfigure}
Let $\Omega \subset \cx^2$ be the bidisc, $0<\rho_1,\rho_2<1$, and  
\[ U =U_1\times U_2=\{z\in \Omega: \abs{z_1}<\rho_1, \rho_2<\abs{z_2}<1\}\]
be the product of  the disc  $U_1= \{z\in \cx: \abs{z}<\rho_1\}$  and 
the annulus $U_2= \{z\in \cx : \rho_2<\abs{z}<1\}$. 
The eigenvalues of the associated Toeplitz operator $T_U$ of the restriction operator
$R_U:A^2(\Omega)\to A^2(U)$ can be found using formula \eqref{eq-eigenvaluealpha}, 
which are given by ($\alpha\in \mathbb{Z}_+^2$)
\[ \lambda_\alpha
= \frac{\norm{z^\alpha}_U^2}{\norm{z^\alpha}_\Omega^2}
=\rho_1^{2\alpha_1+2}(1-\rho_2^{2\alpha_2+2}).\]
Consequently, we have by \eqref{eq-reinhardtnorm}
\[\norm{R_U}
= \sup_{\alpha\in \mathbb{Z}_+^2}\sqrt{ \rho_1^{2\alpha_1+2}(1-\rho_2^{2\alpha_2+2})}
= \rho_1.\]
This example shows that Proposition \ref{prop-generalplanar} does not 
hold in  higher dimensions, and in fact it is possible for  $\Omega$ and 
$U$ to share an open subset of the boundary and still have $\|R_U\|<1$.

Note that $V=\Omega \setminus \ol{U}$ is the well-known Hartogs figure,  
and it is not difficult to see that the map $R_V:A^2(\Omega)\to A^2(V)$ 
is surjective.
\end{example}

We close this section with the proof of Theorem \ref{thm-balls}. 
\begin{proof}[Proof of Theorem \ref{thm-balls}]
Without any loss of generality, we can assume that $R=1$. Taking 
$f\equiv 1$, we see that
\[ \norm{f}_\Omega^2= \int_\Omega 1 dV= {\rm Vol}(\Omega),
\text{ and }  \norm{f}_U^2 = \int_U 1 dV ={\rm Vol}(U).\]
Then we have 
$\frac{\norm{f}_U^2}{\norm{f}_\Omega^2}=r^{2n} \leq \norm{R_U}^2$, 
which establishes the lower bound in \eqref{eq-ruestimate}. The fact 
that this norm is attained when $U$ and $\Omega$ are concentric 
follows from \eqref{eq-normru}.

When $n=1$, the right hand side of \eqref{eq-ruestimate} is 1, 
so there is nothing to prove. Consequently, we will assume $n\geq 2$.  
We denote the coordinates of $\cx^n$ by $(z,w)$, where $z\in \cx$ and 
$w\in \cx^{n-1}$. After a coordinate change given by unitary rotation 
and translation, we may suppose that 
\[ \Omega=\{(z,w)\in \cx\times \cx^{n-1}: \abs{z-1}^2 + \abs{w}^2 <1\},\]
and
\[ U = \{(z,w)\in \cx\times \cx^{n-1}: \abs{z-\delta}^2 + \abs{w}^2 <r^2\}.\]
Clearly 
\begin{equation}\label{eq-c}
 r\leq \delta\leq 1.
\end{equation}
  Denoting by $\pi$ the projection 
from $\cx^n$ to $\cx$ given by $(z,w)\to z$, we see that 
$\pi(U)=\{z\in\cx: \abs{z-\delta}<r\}$, which is a smaller 
disc contained in the  disc $\pi(\Omega)=\{z\in \cx:\abs{z-1}<1\}$. 
For $z\in \pi(U)$, we note that $\pi^{-1}(z)\cap U =\{z\}\times \Delta_U(z)$, 
where $\Delta_U(z)$ is the ball in $\cx^{n-1}$ defined by
\[ \Delta_U(z)
=B_{\cx^{n-1}}\left(0, \sqrt{r^2 - \abs{z-\delta}^2}\right)
=\left\{w\in\cx^{n-1}: \abs{w}^2<r^2 - \abs{z-\delta}^2\right\}.\]
Similarly, for a $z$ in the unit disc, let 
$\pi^{-1}(z)\cap \Omega=\{z\}\times \Delta_\Omega(z)$, 
where $\Delta_\Omega(z)$ is the ball in $\cx^{n-1}$ given by
\[ \Delta_\Omega(z)
=B_{\cx^{n-1}}\left(0, \sqrt{1 - \abs{z-1}^2}\right)
=\left\{w\in\cx^{n-1}: \abs{w}^2<1 - \abs{z-1}^2\right\}.\]
With this notation, we have for any integrable function $u$ the formulas
\[\int_\Omega u dV 
= \int_{\pi(\Omega)} \left(\int_{\Delta_\Omega(z)} u(z,w) dV(w)\right)dV(z) 
\text{ and }   \int_U u dV 
= \int_{\pi(U)} \left(\int_{\Delta_U(z)} u(z,w)dV(w)\right)dV(z),\] 
by representing the integral as a repeated integral.

For notational clarity,  for $z$ in the disc $\pi(U)$, let $S(z)=R_{\Delta_U(z)}$ denote 
the restriction operator 
\[ S(z)= R_{\Delta_U(z)}:A^2(\Delta_\Omega(z))\to A^2(\Delta_U(z)).\]
Since $\Delta_\Omega(z)$ and $\Delta_U(z)$ are concentric balls in $\cx^{n-1}$, 
it follows from \eqref{eq-normru} that 
\begin{equation}\label{eq-normsz}
 \norm{S(z)}= \left( \frac{r^2-\abs{z-\delta}^2}{1-\abs{z-1}^2}\right)^{\frac{n-1}{2}}.
 \end{equation}

 We have, for each $f\in A^2(\Omega)$
 \begin{align*}
 \norm{f}_U^2 &=\int_{\pi(U)} \left(\int_{\Delta_U(z)} \abs{f(z,w)}^2dV(w)\right)dV(z)\\
 &= \int_{\pi(U)}\norm{f(z,\cdot)}^2_{\Delta_U(z)} dV(z)\\
 &\leq  \int_{\pi(U)} \norm{S(z)}^2 \norm{f(z,\cdot)}^2_{\Delta_\Omega(z)} dV(z)\\
 &\leq \sup_{z\in \pi(U)} \norm{S(z)}^2  
 \int_{\pi(U)}\left(\int_{\Delta_\Omega(z)} \abs{f(z,w)}^2 dV(w)\right)dV(z)\\
  &<\sup_{z\in \pi(U)} \norm{S(z)}^2  
  \int_{\pi(\Omega)}\left(\int_{\Delta_\Omega(z)} \abs{f(z,w)}^2 dV(w)\right)dV(z)\\
  &=\left(\sup_{\abs{z-\delta}<r} \norm{S(z)}^2\right) \norm{f}_\Omega^2.
 \end{align*}
 So by \eqref{eq-normsz} we have
 \[ \norm{R_U} \leq \sup_{\abs{z-\delta}<r} \norm{S(z)} 
 =\sup_{\abs{z-\delta}<r}\left( \frac{r^2-\abs{z-\delta}^2}{1-\abs{z-1}^2}\right)^{\frac{n-1}{2}}.\]
Notice now that thanks to \eqref{eq-c}, we have
\[ (\delta^2-r^2)+(1-\delta)\abs{z}^2 \geq 0.\]
Since we can write
\[ r^2-\abs{z-\delta}^2 
= \delta(1-\abs{z-1}^2) - \left( (\delta^2-r^2)+(1-\delta)\abs{z}^2\right)\]
it follows that
\[ \frac{r^2-\abs{z-\delta}^2}{1-\abs{z-1}^2} \leq \delta\]
for each $z$ such that $1-\abs{z-1}^2>0$ (and therefore for $z$ in the smaller 
set $r^2-\abs{z-\delta}^2>0$).  It now follows that
\[ \norm{R_U} \leq \delta^{\frac{n-1}{2}}, \]
thus establishing the upper bound in \eqref{eq-ruestimate}.

To complete the proof, we need to show that if  $\delta=r$, then 
we have $\norm{R_U}\geq r^{\frac{n-1}{2}}$, for $n\geq 1$.  Note that 
since the disc $\pi(\Omega)= \{z\in \cx:\abs{z-1}<1\}$ is contained in the right half plane, 
for each $\gamma>0$, we can define a branch of $z^{-\gamma}$ on $\Omega$. 
Let $f_\gamma$ be such a branch, normalized for uniqueness by the condition 
$f_\gamma(1)=1$. Observe that 
\begin{align*}
\norm{f_\gamma}^2_U &= \int_{\pi(U)}\int_{\Delta_U(z)}\abs{f_\gamma(z,w)}^2dV(w)dV(z)\\
&=\int_{\{\abs{z-r}<r\}} \abs{z}^{-2\gamma} {\rm Vol}
\left(B_{\cx^{n-1}}\left(0, \sqrt{r^2 - \abs{z-r}^2}\right)\right) dV(z)\\
&= {\rm Vol}\left(B_{\cx^{n-1}}\left(0,1\right)\right)\int_{\{\abs{z-r}<r\}} \abs{z}^{-2\gamma} 
\left( r^2 - \abs{z-r}^2\right)^{n-1}dV(z),
\end{align*}
where volumes of zero-dimensional balls are defined to be 1. 
We now make a change of variables in the above integral to a new variable $w$, 
related to $z$ by
\[ z =r w,\]
so that by the change of variables formula, the above integral becomes
\begin{align}
\norm{f_\gamma}^2_U &= {\rm Vol}\left(B_{\cx^{n-1}}\left(0,1\right)\right)
\int_{\{\abs{w-1}<1\}} r^{-2\gamma}\abs{w}^{-2\gamma} r^{2(n-1)}
\left( 1 - \abs{w-1}^2\right)^{n-1}r^2dV(w)\nonumber\\
&=r^{2n-2\gamma} {\rm Vol}
\left(B_{\cx^{n-1}}\left(0,1\right)\right)\left(\int_{\{\abs{w-1}<1\}} \abs{w}^{-2\gamma}
\left( 1 - \abs{w-1}^2\right)^{n-1}dV(w)\right)\label{eq-fgammanorm}\\
&= r^{2(n-\gamma)} \norm{f_\gamma}_\Omega^2.\nonumber
\end{align}
Therefore, for each $\gamma$ such that $f_\gamma\in A^2(\Omega)$ we have
$\norm{R_U}\geq r^{n-\gamma}.$ We now claim that if $\gamma< \frac{n+1}{2}$, then 
$f_\gamma\in A^2(\Omega)$.  Assuming  the claim, we conclude that 
\[ \norm{R_U}\geq \sup_{\gamma<\frac{n+1}{2}}r^{n-\gamma}=r^{\frac{n-1}{2}},\]
which completes the proof of the result modulo the claim. To prove the 
claim, it suffices to show that the integral in \eqref{eq-fgammanorm} is finite 
if $\gamma< \frac{n+1}{2}$. In fact, since
\[ 1-\abs{w-1}^2 =2\Re w -\abs{w}^2 \leq 2 \Re w,\]
it follows that it suffices to show that the integral 
\[ \int_{\{\abs{w-1}<1\}} \abs{w}^{-2\gamma}
\left( \Re w\right)^{n-1}dV(w)\]
is finite. Switching to polar coordinates $w=\rho e^{i\theta}$, this integral becomes
\begin{align*}
\int_{\theta=-\frac{\pi}{2}}^{\frac{\pi}{2}} \int_{\rho=0}^{2\cos \theta} \rho^{-2\gamma}
\left(\rho\cos\theta\right)^{n-1}\rho d\rho d\theta
&= \int_{-\frac{\pi}{2}}^{\frac{\pi}{2}} \left(\cos\theta\right)^{n-1} 
\left(\int_{0}^{2\cos \theta}\rho^{n-2\gamma} d\rho\right) d\theta\\
&= \int_{-\frac{\pi}{2}}^{\frac{\pi}{2}}\left(\cos\theta\right)^{n-1}
 \left.\frac{\rho^{n-2\gamma+1}}{n-2\gamma+1}\right\vert_0^{2\cos \theta} d\theta.
\end{align*}
We note that $\gamma< \frac{n+1}{2}$ implies that the integral 
$\int_{0}^{2\cos \theta}\rho^{n-2\gamma} d\rho$ converges at the endpoint 0 as 
$n-2\gamma>-1$. Therefore, up to some irrelevant multiplying constants, the last
integral above becomes
\[ \int_{-\frac{\pi}{2}}^{\frac{\pi}{2}}\left(\cos\theta\right)^{n-1}
\left(\cos\theta\right)^{n-2\gamma+1} d\theta 
=\int_{-\frac{\pi}{2}}^{\frac{\pi}{2}} \left(\cos\theta\right)^{2(n-\gamma)} d\theta.\]
Since $\gamma<\frac{n+1}{2}$, it follows that $2(n-\gamma)>n-1\geq 0$, 
which shows that the integrand in the above integral is  continuous. 
This shows that $f_\gamma\in A^2(\Omega)$.
\end{proof}

\section{Proof of Theorems \ref{thm-horostrip} and  \ref{thm-hypercycle}}
\label{sec-horosymmetry}
\subsection{Two results of N. Vasilevski}
We will deduce  Theorems \ref{thm-horostrip} and \ref{thm-hypercycle} 
as special cases of the following determination of spectra of Toeplitz operators
on the Bergman space of the upper half plane $\mathbb{H}=\{z\in \cx| \Im(z)>0\}$. 
Recall that given a function $\psi\in L^\infty(\mathbb{H})$, the 
\textit{Toeplitz operator} $T_\psi$ with \textit{symbol} $\psi$ is the bounded 
linear operator on $A^2(\mathbb{H})$ defined as
\[ T_\psi f = P(\psi f)\]
where $P:L^2(\mathbb{H}) \to A^2(\mathbb{H})$ is the Bergman projection on 
$A^2(\mathbb{H})$. Recall also that given a measure space $(X, \mathcal{F}, \mu)$ 
and a function $\gamma\in L^\infty(\mu)$, the multiplication operator $M_\gamma$ 
on $L^2(\mu)$ with \textit{multiplier} $\gamma$ is the bounded operator defined 
by multiplication by $\gamma$:
\[ M_\gamma f =\gamma f.\]
\begin{theorem}[{\cite[Theorem 5.2.1]{vasilevski}}]\label{thm-vasilevski1} 
	Let $\rl^+=(0,\infty), \psi\in L^{\infty}(\mathbb{H}),$ and $a\in L^{\infty}(\rl^+)$ 
	such that $\psi(z)=a(\Im z)$ for all $z\in \mathbb{H}$. Then the Toeplitz 
	operator $T_\psi$ on $A^2(\mathbb{H})$ is unitarily equivalent to the 
	multiplication operator on $L^2(\rl^+)$ with multiplier $\gamma$ given by
\begin{equation}\label{eq-gamma1}
\gamma(x) = \int_{\rl^+} a\left(\frac{\eta}{2x}\right)e^{-\eta}d\eta.
\end{equation}
\end{theorem}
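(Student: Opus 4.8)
The plan is to diagonalize $T_\psi$ by exploiting the invariance of the symbol under real translations, using the Fourier transform in $\Re z$ to construct the unitary equivalence explicitly. Write $z = x+iy$ with $x\in\rl$ and $y\in\rl^+$, and for $f\in A^2(\mathbb{H})$ and each fixed $y>0$ set $\tilde f(\xi,y)=\int_\rl f(x+iy)e^{-ix\xi}\,dx$. The equation $\dbar f=0$, i.e. $(\partial_x+i\partial_y)f=0$, transforms under this Fourier transform into the ODE $\partial_y\tilde f(\xi,y)=-\xi\,\tilde f(\xi,y)$, whose solution is $\tilde f(\xi,y)=c(\xi)e^{-\xi y}$. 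Square-integrability in $y$ forces $c$ to be supported on $\xi>0$, and Plancherel in $x$ together with $\int_0^\infty e^{-2\xi y}\,dy=\tfrac{1}{2\xi}$ gives $\norm{f}_{\mathbb{H}}^2=\tfrac{1}{4\pi}\int_0^\infty\frac{\abs{c(\xi)}^2}{\xi}\,d\xi$. Thus $f\mapsto c$ defines a unitary map $U:A^2(\mathbb{H})\to L^2\!\left(\rl^+,\tfrac{d\xi}{4\pi\xi}\right)$; this is the Paley-Wiener type representation that carries all the structure.

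Next I would compute the quadratic form of $T_\psi$ in these coordinates. Since $\ipr{T_\psi f,g}_{\mathbb{H}}=\ipr{\psi f,g}_{\mathbb{H}}$ for $f,g\in A^2(\mathbb{H})$, and $\psi(z)=a(y)$, I would write
\[ \ipr{\psi f,g}_{\mathbb{H}}=\int_0^\infty a(y)\left(\int_\rl f(x+iy)\overline{g(x+iy)}\,dx\right)dy, \]
apply Plancherel in $x$, and substitute $\tilde f(\xi,y)=c_f(\xi)e^{-\xi y}$ and $\tilde g(\xi,y)=c_g(\xi)e^{-\xi y}$. After interchanging the $y$- and $\xi$-integrations by Fubini, the inner $y$-integral becomes $\int_0^\infty a(y)e^{-2\xi y}\,dy$, and the substitution $\eta=2\xi y$ turns it into $\tfrac{1}{2\xi}\int_0^\infty a\!\left(\tfrac{\eta}{2\xi}\right)e^{-\eta}\,d\eta=\tfrac{\gamma(\xi)}{2\xi}$, where $\gamma$ is precisely the multiplier of \eqref{eq-gamma1} (the spectral variable $\xi$ here plays the role of the variable $x$ appearing in that formula). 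Collecting constants yields
\[ \ipr{\psi f,g}_{\mathbb{H}}=\int_0^\infty\gamma(\xi)\,c_f(\xi)\overline{c_g(\xi)}\,\frac{d\xi}{4\pi\xi}=\ipr{M_\gamma(Uf),Ug}, \]
so that $U\,T_\psi\,U^{-1}=M_\gamma$ on $L^2\!\left(\rl^+,\tfrac{d\xi}{4\pi\xi}\right)$.

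Finally, since the weight $\tfrac{1}{4\pi\xi}$ is a positive function of $\xi$ alone, the map $c(\xi)\mapsto c(\xi)/\sqrt{4\pi\xi}$ is a unitary from $L^2\!\left(\rl^+,\tfrac{d\xi}{4\pi\xi}\right)$ onto $L^2(\rl^+)$ that commutes with every multiplication operator. Composing $U$ with it replaces the weighted space by the unweighted $L^2(\rl^+)$ while leaving $M_\gamma$ unchanged, giving the asserted unitary equivalence of $T_\psi$ with $M_\gamma$ on $L^2(\rl^+)$.

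I expect the main obstacle to be the rigorous justification of the Fourier representation and the interchanges of integration, rather than the formal computation. In particular, showing that every $f\in A^2(\mathbb{H})$ admits well-defined slice transforms $\tilde f(\cdot,y)$ of the exponential form above — and that the resulting $U$ is \emph{onto} $L^2\!\left(\rl^+,\tfrac{d\xi}{4\pi\xi}\right)$ — is the delicate point, most cleanly handled by verifying everything on a dense subspace of well-behaved functions and then passing to the limit. The Fubini step is benign because $a\in L^\infty(\rl^+)$ and the remaining integrand is absolutely integrable once $c_f,c_g\in L^2\!\left(\rl^+,\tfrac{d\xi}{4\pi\xi}\right)$; one should also record that $M_\gamma$ is bounded, which follows from the estimate $\norm{\gamma}_\infty\le\norm{a}_\infty\int_0^\infty e^{-\eta}\,d\eta=\norm{a}_\infty$.
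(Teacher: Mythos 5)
The paper gives no proof of this statement—it is quoted verbatim from Vasilevski's book \cite[Theorem 5.2.1]{vasilevski}—and your argument is, in substance, the standard proof in that reference: Fourier transform in $\Re z$, the Paley--Wiener identification of $A^2(\mathbb{H})$ with $L^2\!\left(\rl^+,\tfrac{d\xi}{4\pi\xi}\right)$ via $\tilde f(\xi,y)=c(\xi)e^{-\xi y}$, and evaluation of the quadratic form $\ipr{\psi f,g}$ with the substitution $\eta=2\xi y$, which produces exactly the multiplier \eqref{eq-gamma1}. Your computation is correct, constants and the final unweighting unitary included, and the points you flag (slice Fourier transforms taken in the $L^2$ sense, surjectivity of $U$, Fubini) are precisely the details that the dense-subspace argument you describe handles.
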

\begin{theorem}[{\cite[Theorem 7.2.1]{vasilevski}}]\label{thm-vasilevski2} 
	Let  $\psi\in L^{\infty}(\mathbb{H})$ and $a\in L^{\infty}(0, \pi)$ such that 
	$\psi(re^{i\theta})=a(\theta)$ for all $0<\theta<\pi$. Then the Toeplitz 
	operator $T_\psi$ on $A^2(\mathbb{H})$ is unitarily equivalent to the 
	multiplication operator on $L^2(\rl)$ with multiplier $\gamma$ given by
\begin{equation}\label{eq-gamma2}
\gamma(\lambda)
= \frac{2\lambda}{1-e^{-2\pi\lambda}} \int_0^{\pi} a(\theta) e^{-2\lambda\theta} d\theta.
\end{equation}
\end{theorem}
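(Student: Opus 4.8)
The plan is to exploit the scaling symmetry of the half plane, for which the hypothesis that $\psi$ depends only on $\arg z$ is tailor-made. For $t>0$ define the dilation operator $(V_tf)(z)=t\,f(tz)$. The change-of-variables formula together with $dV(z)=t^{-2}\,dV(tz)$ shows that each $V_t$ is unitary on $L^2(\mathbb{H})$, that it restricts to a unitary of $A^2(\mathbb{H})$, and that $V_sV_t=V_{st}$; writing $t=e^\tau$ one obtains a strongly continuous unitary representation of $(\mathbb{R},+)$. Since $\arg(tz)=\arg z$, the symbol satisfies $\psi(tz)=\psi(z)$, so the multiplication operator $M_\psi$ commutes with every $V_t$; and because $V_t$ preserves $A^2(\mathbb{H})$ inside $L^2(\mathbb{H})$, the Bergman projection $P$ commutes with each $V_t$ as well. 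Hence the Toeplitz operator $T_\psi=PM_\psi P$ commutes with the whole group $\{V_t\}$.

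First I would diagonalize this group. By the spectral theorem for one-parameter unitary groups, or concretely by the Mellin transform in the radial variable, there is a unitary $\mathcal{M}\colon A^2(\mathbb{H})\to\int_{\mathbb{R}}^{\oplus}\mathcal{H}_\lambda\,d\lambda$ carrying $V_{e^\tau}$ to multiplication by $e^{-i\lambda\tau}$. The essential simplification is that each fiber $\mathcal{H}_\lambda$ is one-dimensional: separating variables in the Cauchy--Riemann equation shows that a holomorphic function on $\mathbb{H}$ homogeneous of a fixed degree must be a constant multiple of $z^{-s}$, and on the $L^2$-critical line $\Re s=1$ this pins the fiber generator down to $e_\lambda(z)=z^{-1+i\lambda}$ up to normalization. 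Since $T_\psi$ commutes with $\{V_t\}$ it is decomposable, $\mathcal{M}T_\psi\mathcal{M}^{-1}=\int_{\mathbb{R}}^{\oplus}\gamma(\lambda)\,d\lambda$, acting as a scalar $\gamma(\lambda)$ on each line $\mathcal{H}_\lambda$.

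To identify $\gamma(\lambda)$ I would compute the action of $T_\psi$ on the fiber generator. Because $P$ reduces to the identity on the holomorphic part of each fiber, $\gamma(\lambda)$ is the ratio obtained by pairing $M_\psi e_\lambda$ against $e_\lambda$ inside the fiber. Writing $z=re^{i\theta}$ gives $\abs{e_\lambda(re^{i\theta})}^2=r^{-2}e^{-2\lambda\theta}$, so in the ratio the (formally divergent) radial factor $\int_0^\infty r^{-1}\,dr$ cancels and only the angular integral survives:
\[
\gamma(\lambda)=\frac{\int_0^\pi a(\theta)\,e^{-2\lambda\theta}\,d\theta}{\int_0^\pi e^{-2\lambda\theta}\,d\theta}=\frac{2\lambda}{1-e^{-2\pi\lambda}}\int_0^\pi a(\theta)\,e^{-2\lambda\theta}\,d\theta,
\]
using $\int_0^\pi e^{-2\lambda\theta}\,d\theta=(1-e^{-2\pi\lambda})/(2\lambda)$. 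Identifying each one-dimensional fiber with $\mathbb{C}$ then realizes $\int_{\mathbb{R}}^{\oplus}\mathcal{H}_\lambda\,d\lambda$ as $L^2(\mathbb{R})$ and $T_\psi$ as $M_\gamma$. As a sanity check, $\gamma$ is a weighted average of $a$, so $\norm{\gamma}_\infty\le\norm{a}_\infty$, consistent with $\norm{T_\psi}\le\norm{\psi}_\infty$.

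I expect the main obstacle to lie in making the direct-integral decomposition fully rigorous. The candidate fiber generators $z^{-1+i\lambda}$ are not themselves in $A^2(\mathbb{H})$ --- they fail square integrability at both $0$ and $\infty$, exactly as plane waves fail to lie in $L^2(\mathbb{R})$ --- which is precisely why the radial integral above is only formal. One must therefore justify the Mellin diagonalization as an honest unitary, establish the one-dimensionality of the fibers, and above all verify that the Bergman projection respects the decomposition and acts as the identity on the holomorphic part of each fiber, replacing the formal pairing by a genuine regularization. Once this structural input is secured, the evaluation of $\gamma$ reduces to the elementary angular integral displayed above.
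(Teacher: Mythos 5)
The paper does not prove this statement at all --- it is imported verbatim, with citation, from Vasilevski's book --- so the only comparison available is with the cited source, and your proposal follows essentially the same route as Vasilevski's proof: diagonalize the dilation group via the Mellin transform in the radial variable, observe that the holomorphic fiber over each $\lambda$ is one-dimensional (spanned by the angular part $e^{-\lambda\theta}e^{-i\theta}$ of $z^{-1+i\lambda}$), and read off the multiplier as the normalized angular average, which you compute correctly, including the factor $\int_0^\pi e^{-2\lambda\theta}\,d\theta=(1-e^{-2\pi\lambda})/(2\lambda)$. The technical points you flag --- unitarity of the Mellin diagonalization, one-dimensionality of the fibers, and that the Bergman projection acts in each fiber as the rank-one projection onto the normalized generator --- are precisely what Vasilevski's explicitly constructed unitary supplies, so your outline is correct and becomes a complete proof once those standard verifications are written out.
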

Such remarkable explicit determination of the spectrum of a Toeplitz 
operator is terms of the symbol is quite rare and can be achieved only on 
very symmetric special cases as above. 

\subsection{Mapping to the upper half-plane}\label{sec-mapping} 
In order to apply Theorems \ref{thm-vasilevski1} and \ref{thm-vasilevski2} to the 
proofs of Theorems \ref{thm-horostrip} and \ref{thm-hypercycle}, respectively, 
we begin by mapping the unit disc $\disk$  conformally to the upper half 
plane  $\mathbb{H}$. Thanks to Proposition \ref{prop-invariance} we know 
that the spectrum of $T_U$ is invariant under biholomorphisms of the ambient 
domain, so we can use this upper half-plane model to compute the spectrum.

Consider, as in Theorem \ref{thm-horostrip}, part 2, a horocyclic strip in the 
unit disc, where the outer and inner bounding horocycles have Euclidean radii 
$\rho_2$ and $\rho_1$ respectively. After a rotation of the unit disc, we may 
assume that the point of contact is at $1\in \partial \mathbb{D}$.  By the 
standard conformal map of $\mathbb{D}$ to $\mathbb{H}$ given by 
\begin{equation}\label{eq-cayley}
 z\mapsto  i \frac{1+z}{1-z}
\end{equation}
the bounding horocycle $\{z\in \cx: \abs{z-(1-\rho_j)} = \rho_j\}$ is mapped to 
a Euclidean straight line in the upper half plane parallel to the real axis, 
given by $\{ z\in \cx: \Im z = \frac{1}{\rho_j} -1\}$. Therefore, the horocyclic strip 
bounded by  Euclidean circles of radii $0<\rho_1<\rho_2<1$ is therefore 
mapped into the horizontal strip in the upper half plane given by
\[ \left\{ z\in \cx: \frac{1}{\rho_2}-1 < \Im z < \frac{1}{\rho_1}-1\right\}.\]
Applying the dilation of the upper half plane given by 
$z\mapsto  \frac{z}{\frac{1}{\rho_2}-1}$, this is mapped onto 
the horizontal strip
\[ V_\alpha = \{z\in \cx:  1<\Im  z <\alpha\},\]
where $\alpha$ is as in \eqref{eq-alpha}.  The full horodisc corresponds 
to the limiting situation of $\rho_1\to 0$, and therefore can be mapped 
to the half-plane
\[ V_\infty = \{ z\in \cx: \Im z >1\}.\]

We now consider the mapping of hypercycles and lunes and crescents 
determined by them. Let $H$ be a hypercycle in the Poincaré disc with axis 
$\Gamma$.  Recall that by definition this means that $H$ is a connected curve 
consisting of points at a fixed hyperbolic distance from $\Gamma$. After  applying  
an automorphism of  the unit disc, the geodesic $\Gamma$ is mapped onto the 
interval $(-1,1)$ on the real axis. The hypercycle $H$ is mapped to an arc of a 
circle passing through the points $\pm 1$ (see Figure \ref{fig-map-hypercycle}). 
\begin{figure}
		\begin{center}
			\begin{tikzpicture}[scale=2.5]
			\draw [dashed] (0,0) circle (1cm); 
			\draw (1,0) arc [radius=1.11803, 
			start angle=26.56505, end angle= 153.43494];
			\draw [dashed] (-1,0)--(1,0);
			\node [below] at (0,0) {$\Gamma$};
			\node [above] at (0, 0.60) {$H$};
			\end{tikzpicture}
			\hspace{1.5cm}
			\begin{tikzpicture}[scale=3]
			\draw [dashed] (-1,0)--(1,0);
			\draw [dashed] (0,0)-- (0,1.6);
			\draw (0,0)--(1,0.6954);
			\node [left] at (0,1) {$\Gamma$};
			\node [below] at (0.5, 0.30) {$H$};
			\end{tikzpicture}
		\end{center}
\caption{Hypercycles in  the disc and the upper half-plane}\label{fig-map-hypercycle}
\end{figure}
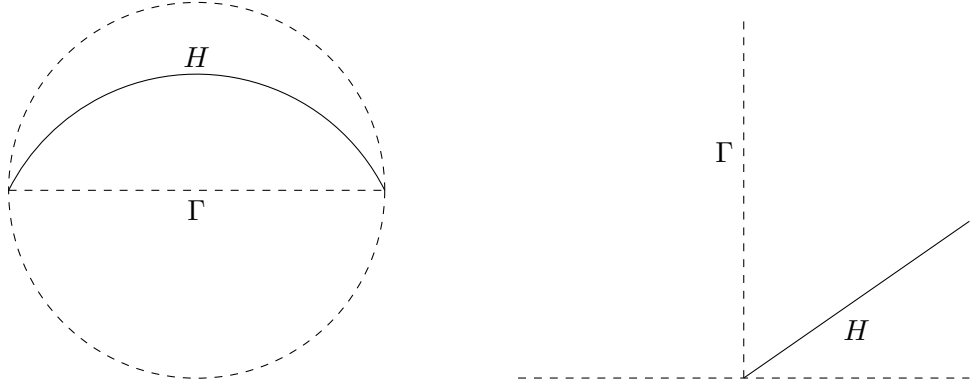
Under the conformal map \eqref{eq-cayley}, $-1$ is mapped to 0, $1$ is mapped to 
$\infty$ and therefore the diameter $\Gamma=(-1,1)$ is mapped to the positive 
imaginary axis. It follows that the hypercycle $H$ (which is an arc of an Euclidean 
circle in the complex plane) is mapped to a rectilinear ray in the upper half plane 
$\mathbb{H}$ passing through 0. Therefore, a hypercyclic lune is represented in
 the upper half plane by a wedge of the form
\begin{equation}\label{eq-wedge}
W(\alpha, \beta) 
= \{re^{i\theta}\in \mathbb{H}|r>0 \text{ and } \alpha<\theta<\beta\},
\end{equation}
where $0<\alpha<\beta<\pi$. Similarly, a hypercyclic crescent takes the 
form $W(\alpha,\pi)$ or $W(0, \beta)$.

\begin{proof}[Proof of Theorem \ref{thm-horostrip}]
Thanks to Proposition \ref{prop-invariance} and the conformal mappings 
constructed in Section \ref{sec-mapping},  the spectrum of a horodisc in 
$\mathbb{D}$ coincides with that of the operator $T_{V_\infty}$ on 
$A^2(\mathbb{H})$ where $V_\infty = \{ z\in \cx: \Im z >1\}.$  It follows easily 
from Theorem \ref{thm-vasilevski1}, the spectrum of 
$T_{V_\infty}$ is the interval
\begin{equation}\label{eq-interval}
\left[ \inf_{x\in \rl^+}\gamma(x), \sup_{x\in \rl^+}\gamma(x)\right]
\end{equation}
where $\gamma$ is as in \eqref{eq-gamma1}, with  $a = \chi_{[1,\infty)}$,
the characteristic function of the interval $[1,\infty)$.  Therefore, for $x>0$, we have
\begin{align*}
	\gamma(x)=\int_{\frac{\eta}{2x}>1}e^{-\eta}d\eta
= \int_{2x}^\infty e^{-\eta}d\eta
=e^{-2x}.
\end{align*}
Therefore $\inf_{x\in \rl^+}\gamma(x)=0$ and $\sup_{x\in \rl^+}\gamma(x)=1$, and part (1) 
of Theorem \ref{thm-horostrip} follows.

For part (2),  note that the spectrum is still given by \eqref{eq-interval}, provided 
we define $\gamma$ in \eqref{eq-gamma1} with $a=\chi_{[1,\alpha]}$. Therefore, 
in this case, for each $x>0$.
\begin{align*}
	\gamma(x)=\int_{1<\frac{\eta}{2x}<\alpha}e^{-\eta}d\eta 
=\int_{2x}^{2x\alpha}e^{-\eta}d\eta
= e^{-2x}-e^{-2x\alpha}.
\end{align*}
But as $\lim_{x\to \infty}  \left(e^{-2x}-e^{-2x\alpha}\right)=0$, it follows that 
$\inf_{x\in \rl^+}\gamma(x)=0$. To find the supremum, note that 
$\gamma'(x)= -2e^{-2x} + 2 \alpha e^{-2x\alpha}$, so that the only 
critical point of $\gamma$  is given by
\[ x_{\rm crit} = \frac{1}{2} \frac{\ln \alpha}{\alpha-1}.\]
Note also that $\lim_{x\to 0} \gamma(x)=0$, and we must have 
$0\leq \gamma \leq 1$ (since the range of $\gamma$ coincides 
with the spectrum of $T_{V_\alpha}$, which we know to be
a subset of $[0,1]$.) So it follows that the critical point $x_{\rm crit}$ 
is in fact a point of global maximum of $\gamma$ on $\rl^+$. Therefore
\begin{align*}
 \sup_{x\in \rl^+}\gamma(x)= \gamma(x_{\rm crit})
 = \alpha^{- \frac{1}{\alpha -1}} - \alpha^{- \frac{\alpha}{\alpha-1}},
 \end{align*}
 which completes the proof of Theorem \ref{thm-horostrip}.
\end{proof}

\begin{proof}[Proof of Theorem \ref{thm-hypercycle}]
By biholomorphic invariance, it suffices to consider the operator 
$T_{W(\alpha,\beta)}$ on $A^2(\mathbb{H})$. Thanks to 
Theorem \ref{thm-vasilevski2}, the spectrum of 
$T_{W(\alpha,\beta)}$ coincides with the closure of range 
of the function $\gamma:\rl \to \rl$ given by
 \begin{align}\label{eq-xi}
\gamma(\lambda)= \frac{2\lambda}{1-e^{-2\pi\lambda}} 
 \int_{\alpha}^\beta e^{-2\lambda\theta}d\theta
 = \frac{e^{-2\lambda\alpha}-e^{-2\lambda\beta}}{1-e^{-2\pi\lambda}}
 =\frac{\xi^b-\xi^a}{\xi-1}
 \end{align}
 where $\xi=\xi(\lambda)=e^{-2\pi\lambda}, a = \frac{\alpha}{\pi}, b=\frac{\beta}{\pi}$, 
 so that $0\leq a < b\leq {1}$. Note that  $\xi\to 0$ as $\lambda\to \infty$ and 
  $\xi\to \infty$ as  $\lambda \to -\infty$. 
 
 First consider the situation of a hypercyclic crescent represented in 
 $\mathbb{H}$ by $W(\alpha, \pi)$, where $0<\alpha<\pi$. Then, in 
 \eqref{eq-xi} we have $b=1,0<a<1$, and $\gamma= \frac{\xi-\xi^a}{\xi-1}$. 
 So $\lambda\to\infty$ implies that $\xi\to0$ which, in turn implies that 
 $\gamma\to 0$. On the other hand, $\lambda\to -\infty$ implies that 
 $\xi \to \infty$ and hence $\gamma\to 1$. Therefore we have
 \[ {\rm spec}(T_{W(\alpha,\pi)}) 
 =\left[ \sup_{\lambda\in \rl} \gamma(\lambda), 
 \inf_{\lambda\in \rl} \gamma(\lambda)\right]
 =[0,1].\]
 The other possibility for a hypercyclic crescent is to be represented 
 in $\mathbb{H}$ by the wedge $W(0,\beta)$, where $0<\beta<1$. 
 Then in \eqref{eq-xi}, we have $a=0$ and $0<b<1$. So we have 
 $\gamma = \frac{\xi^b-1}{\xi-1}$. Therefore as $\lambda\to \infty$ 
 we have $\xi\to 0$ and $\gamma\to 1$. Similarly as $\lambda\to -\infty$ 
 we have $\xi\to \infty$ and $\gamma\to 0$. This shows again that the 
 spectrum of $T_{W(0,\beta)}$ equals to $[0,1]$. This concludes the 
 proof of Part (1) of Theorem \ref{thm-hypercycle}.

For part (2) of Theorem \ref{thm-hypercycle},  if $0<a<b<1$, then the function
\[ \frac{\xi^b-\xi^a}{\xi-1}\]
goes to 0 as either $\xi\to \infty$ or $\xi\to 0$ (i.e., $\lambda\to \pm \infty$). 
It follows that the spectrum of $T_{W(\alpha,\beta)}$ is of the form 
$[0, c(\alpha,\beta)]$, where
\[ c(\pi a, \pi b)=c(\alpha,\beta)
= \norm{T_{W(\alpha,\beta)}} 
= \sup_{0<\xi<\infty} \frac{\xi^b-\xi^a}{\xi-1}.\]
The function $c(\alpha,\beta)$ cannot be expressed in closed form. 
But note that for fixed $0<\alpha<\pi$ as $\beta\to \alpha^+$ clearly 
$c(\alpha,\beta)\to 0$ and as $\beta\to \pi$, $c(\alpha,\beta)\to 1$, 
since as $b\to a$, the function $\xi\mapsto \frac{\xi^b-\xi^a}{\xi-1}$
converges uniformly to 0 on compact sets, and as $b\to 1$, it converges 
uniformly to 1 on compact sets. To complete the proof we need to  
show that $c(\alpha,\beta)=\norm{T_{W(\alpha,\beta)}}<1$ 
This follows from an elementary computation which is given 
below as a separate Lemma. 
 \end{proof}

 \begin{lemma} Let $0<a<b<1$. Then 
 \[c(\pi a, \pi b)= \sup_{0<\xi<\infty} \frac{\xi^b-\xi^a}{\xi-1}<1.\]
 \end{lemma}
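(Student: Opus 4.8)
The plan is to reduce everything to the single-variable function
\[ g(\xi) = \frac{\xi^b - \xi^a}{\xi - 1}, \qquad \xi \in (0,\infty),\ \xi \neq 1, \]
extended continuously across $\xi = 1$ by $g(1) = b - a$ (the value obtained from L'H\^opital's rule, equivalently by differentiating $t \mapsto \xi^t$ at the endpoints $t = 0,1$). First I would record the qualitative behavior of $g$. Because $0 < a < b$, for every $\xi \neq 1$ the numerator $\xi^b - \xi^a$ and the denominator $\xi - 1$ have the same sign, so $g > 0$ on all of $(0,\infty)$, and $g(1) = b - a > 0$ as well. As $\xi \to 0^+$ both $\xi^a$ and $\xi^b$ tend to $0$, so $g(\xi) \to 0$; as $\xi \to \infty$ we have $\xi^b - \xi^a \sim \xi^b$ while $\xi - 1 \sim \xi$, so $g(\xi) \sim \xi^{b-1} \to 0$ since $b < 1$. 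Thus $g$ is continuous and positive on $(0,\infty)$ and decays to $0$ at both ends, which forces its supremum to be attained at some interior point $\xi_0 \in (0,\infty)$.

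The heart of the matter is the pointwise estimate $g(\xi) < 1$ for every $\xi$, and I would prove it by comparing increments of the convex function $F(t) = \xi^t = e^{t \log \xi}$ over the nested intervals $[a,b] \subset [0,1]$. Writing $F(b) - F(a) = \int_a^b F'(t)\,dt$ and $F(1) - F(0) = \int_0^1 F'(t)\,dt$, the difference $[F(1)-F(0)] - [F(b)-F(a)]$ equals $\int_0^a F'(t)\,dt + \int_b^1 F'(t)\,dt$, an integral of $F'(t) = \xi^t \log \xi$ over the two intervals $[0,a]$ and $[b,1]$, both of strictly positive length since $0 < a$ and $b < 1$. When $\xi > 1$ we have $\log \xi > 0$, so this difference is strictly positive, giving $\xi^b - \xi^a < \xi - 1$; dividing by $\xi - 1 > 0$ yields $g(\xi) < 1$. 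When $0 < \xi < 1$ we have $\log \xi < 0$, so the same difference is strictly negative, giving $\xi^b - \xi^a > \xi - 1$; dividing by $\xi - 1 < 0$ flips the inequality and again yields $g(\xi) < 1$. The remaining value $g(1) = b - a < 1$ is immediate.

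Combining the two steps finishes the argument: the supremum defining $c(\pi a, \pi b)$ equals $g(\xi_0)$ for the interior maximizer $\xi_0$ located in the first step, and the pointwise estimate gives $g(\xi_0) < 1$. I expect no serious obstacle beyond bookkeeping; the only point that genuinely needs care is that a pointwise strict inequality $g < 1$ does not by itself bound the supremum away from $1$, which is precisely why I first secure that the maximum is \emph{attained} at a true interior point rather than merely approached in a limit. The decay of $g$ at $0$ and at $\infty$ rules out any escape of the supremum to the boundary, and the convexity-based integral comparison works uniformly in both regimes $\xi > 1$ and $\xi < 1$.
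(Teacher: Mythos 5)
Your proof is correct, and it shares the same two-step skeleton as the paper's argument --- first a pointwise bound $g(\xi)<1$, then a mechanism preventing the supremum from creeping up to $1$ --- but both steps are executed by genuinely different means. For the pointwise bound, the paper rewrites $g(\xi)=\frac{1-\xi^b}{1-\xi}-\frac{1-\xi^a}{1-\xi}$ for $\xi<1$ (and the analogous form $\frac{\xi^b-1}{\xi-1}-\frac{\xi^a-1}{\xi-1}$ for $\xi>1$), drops the positive subtracted term, and uses the orderings $\xi<\xi^b<\xi^a<1$ on $(0,1)$ and $1<\xi^a<\xi^b<\xi$ on $(1,\infty)$; you instead express the increments of $F(t)=\xi^t$ as integrals of $F'(t)=\xi^t\log\xi$ and observe that $[F(1)-F(0)]-[F(b)-F(a)]=\int_0^a F'(t)\,dt+\int_b^1 F'(t)\,dt$ has the sign of $\log\xi$, which yields the strict inequality in both regimes by one uniform computation. (A small nit: you advertise convexity of $F$, but convexity is never actually used --- only the constant sign of $F'$ matters.) For the global step, the paper partitions $(0,\infty)$ into $(0,\delta]$, $[\delta,1]$, and $[1,\infty)$, bounding the supremum on each piece via the limits at $0$, $1$, and $\infty$ together with compactness of the middle interval; you replace this three-interval bookkeeping with a single extreme-value argument: since $g$ extends continuously across $\xi=1$ with $g(1)=b-a$, is positive, and decays to $0$ at both ends, its supremum is attained at an interior point, where the pointwise bound applies. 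Your route is slightly more streamlined, and you correctly isolate the one genuine subtlety --- that a pointwise strict inequality $g<1$ does not by itself bound the supremum away from $1$ --- which is exactly the issue the paper's interval decomposition is designed to handle; both arguments are equally elementary.
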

 \begin{proof} 
 	Let $f(\xi)= \dfrac{\xi^b-\xi^a}{\xi-1}$. As $\xi\to 0$, we have $f(\xi)\to 0$. 
 	Therefore, there is a $\delta>0$ such that
 \begin{equation}\label{eq-sup1}
\sup_{0<\xi\leq\delta} \frac{\xi^b-\xi^a}{\xi-1}< \frac{1}{2}.
\end{equation}
  Since $0<a<b<1$, we have, for $\delta< \xi < 1$ that
 \[ \delta< \xi < \xi^b <\xi^a < 1.\]
and therefore that
 \begin{align*}
 f(\xi) = \frac{1-\xi^b}{1-\xi} - \frac{1-\xi^a}{1-\xi} < \frac{1-\xi^b}{1-\xi} <1.
\end{align*}
 Combined with the facts that $\lim_{\xi\to \delta}f(\xi)=f(\delta)<\frac{1}{2}$ and 
 \[ \lim_{\xi\to 1}f(\xi) = \lim_{\xi\to 1}\frac{b\xi^{b-1}-a\xi^{a-1}}{1}=b-a<b<1,\]
 we obtain that
 \begin{equation}\label{eq-sup2}
\sup_{\delta\leq\xi\leq1} \frac{\xi^b-\xi^a}{\xi-1}< 1.
\end{equation}
Furthermore, if $\xi>1$ then $1<\xi^a< \xi^b<\xi$. So we have
 \begin{align*}
f(\xi) = \frac{\xi^b- 1}{\xi-1}- \frac{\xi^a-1}{\xi-1}
 < \frac{\xi^b- 1}{\xi-1}<1.
 \end{align*}
 Therefore, $f(\xi)<1$ for each $\xi\in [1,\infty)$. Further, since $\lim_{\xi\to \infty} f(\xi)=0$, 
 it follows that 
 \begin{equation}\label{eq-sup3}
\sup_{1\leq \xi<\infty} \frac{\xi^b-\xi^a}{\xi-1} <1.
\end{equation}
Combining \eqref{eq-sup1}, \eqref{eq-sup2}, \eqref{eq-sup3} the result follows. 
  \end{proof}

\subsection{Some examples and comments}
\subsubsection{Isospectrality of hypercyclic crescents and horodiscs} 
Theorems \ref{thm-horostrip} and \ref{thm-hypercycle} show that a horodisc 
and a hypercyclic crescent are isospectral, both having spectrum $[0,1]$. 
But they are not congruent to each other (i.e., there is no automorphism 
of $\disk$ which maps a hypercyclic crescent to a horodisc). Indeed, the 
boundary relative to the unit disc of a hypercyclic crescent is a hypercycle, 
whereas that of a horodisc is a horocycle. But  the geodesic curvature of a 
hypercycle is less than $2\sqrt{\pi}$ in absolute value, whereas that of a 
horocycle is equal to $2\sqrt{\pi}$ in absolute value 
(see \cite[Section 11.6, pg 282]{vasilevski}). 
Since the automorphisms are isometries of the Poincar\'e metric, 
all quantities defined in terms of the metric (such as curvature) 
are  invariant. Therefore, horocycles and hypercycles are not congruent.

\subsubsection{Isospectral hypercyclic lunes and horocyclic strips}
Another example of such isospectral but noncongruent subdomains of the 
disc is obtained by looking at two hypercyclic crescents  $U_1$ and $U_2$ 
with coaxial but distinct bounding hypercycles $H_1$ and $H_2$, since 
$H_1$ and $H_2$ have different geodesic curvatures with respect to 
the hyperbolic metric (see \cite[Theorem 11.6.2]{vasilevski}).
Note also that given any $0<c<1$, there is  a horocyclic strip and a 
hypercyclic lune (which are non-congruent thanks to the curvature 
argument above), such that each has spectrum $[0,c]$. This follows  
from a closer analysis of the formulas in part (ii) of each of 
Theorems \ref{thm-horostrip} and \ref{thm-hypercycle}.

\subsubsection{Examples of closed range} 
Consider a horocyclic strip $H$ in the disc $\disk$, like the shaded region in 
Figure \ref{fig-horostrip}. Then the complement $U$ of $H$ can be written 
as a disjoint union
\[ U=\disk \setminus H = U_1\cup U_2,\]
where $U_1$ is a horodisc and $U_2$ is the complement of a horodisc 
(therefore $U_2$ is bounded by a horocycle and the unit circle.) From 
Proposition \ref{prop-eigen} we see that the spectrum of $U_2$ relative 
to $\disk$ is the interval $[0,1]$. Since the spectrum of $H$ is of the form
$[0,\norm{T_H}]$, where $0<\norm{T_H}<1$, we see again from 
Proposition \ref{prop-eigen} that 
\[ {\rm spec}(T_U)= [1-\norm{T_H},1]\]
so that in particular, by Corollary \ref{prop-closedrange}, the restriction 
operator on  $U=U_1\cup U_2$ has closed range. 

Another similar example is the complement of the hypercyclic lune $W$ in 
Figure \ref{fig-hypercycle}. Since again the spectrum of $W$ relative to 
$\disk$ is the interval $[0, \norm{T_W}]$  it follows that the complement 
$\disk\setminus \overline{W}$ has spectrum
$[1-\norm{T_W},1]$, and the restriction operator from $A^2(\disk)$ 
to $A^2(\disk\setminus\overline{W})$ has closed range.

\section{Proof of Theorem \ref{thm-symmetry}}\label{section-symmetry}
We will use the following fact  (see \cite[Theorem 1.3.12]{green-krantz}) 
in the proof of Theorem \ref{thm-symmetry}:  \textit{If $\Omega$ is a 
	bounded domain in $\cx^n$, the map 
\begin{align*}
{\rm Aut}(\Omega)\times \Omega &\to \Omega\times \Omega\\
(\phi,z)&\mapsto (\phi(z),z)
\end{align*}
is proper.} Recall that to say that a continuous map between topological 
spaces is \textit{proper} means that the inverse image of each compact 
subset of the codomain is compact in the domain of the map. Recall also 
that the topology on $\textrm{Aut}(\Omega)$ is the natural compact-open 
topology. 

\begin{proof}[Proof of Theorem \ref{thm-symmetry}]
Since $G$ is a closed subset of ${\rm Aut}(\Omega)$, it follows from  
\cite[Theorem 1.3.12]{green-krantz} that the restricted map
\begin{align} 
G\times \Omega &\to \Omega\times \Omega \label{eq-action}\\
 (\phi,z)&\mapsto (\phi(z),z)\nonumber
 \end{align}
is also proper. Let $z_0\in \Omega$ and consider the orbit of $p$ under 
$\Omega$ i.e. the set
\[ G(z_0)=\{\phi(z_0)|\phi\in G\}.\]
We claim that $G(z_0)$ is noncompact. Indeed, if it were compact, its 
inverse image under the map \eqref{eq-action}  would be compact, i.e., 
$G\times\{z_0\}$ would be compact; but by hypothesis $G$ is noncompact, 
so we have a contradiction. Choosing $z_0\in U$,  and noting that by 
hypothesis $G(z_0)\subset U$, we see that there is a point 
$q\in \partial \Omega\cap \partial U$, and a sequence of automorphisms 
$\{\Phi_j\}\subset G$ such that $\Phi_j(z_0)\to q$ as $j\to \infty$. 
Consider the functions $f_j\in A^2(\Omega)$ defined by
\[ f_j(z)= \frac{B_\Omega(z,\Phi_j(z_0))}{\sqrt{B_\Omega(\Phi_j(z_0),\Phi_j(z_0))}}.\]
We begin by noting (using the reproducing property of the Bergman kernel) that
\begin{align*}
 \norm{f_j}_{A^2(\Omega)}
 &=\frac{1}{B_\Omega(\Phi_j(z_0),\Phi_j(z_0))}
 \int_\Omega B_\Omega(z,\Phi_j(z_0))B_\Omega (\Phi_j(z_0),z)dV(z)\\
 &= \frac{1}{B_\Omega(\Phi_j(z_0),\Phi_j(z_0))}\cdot B_\Omega(\Phi_j(z_0),\Phi_j(z_0))\\
 &=1.
 \end{align*}
 Assume now for a contradiction that $T_U$ is compact, so that the restriction 
 operator $R_U:A^2(\Omega)\to A^2(U)$ is also compact.  Consequently, after 
 passing to a subsequence  if necessary, we may assume that the sequence 
 $R_U f_j=f_j|_U$ converges in the $L^2$-norm as $j\to \infty$ to a $g\in A^2(U)$. 
 Now, note that 
 \begin{align*}
 \int_U \abs{f_j}^2 dV 
 &= \frac{1}{B_\Omega(\Phi_j(z_0),\Phi_j(z_0))}\int_U \abs{B_\Omega(z,\Phi_j(z_0))}^2 dV(z)\\
 &=  \frac{1}{B_\Omega(\Phi_j(z_0),\Phi_j(z_0))\cdot\abs{\det \Phi_j'(z_0)}^2}
 \int_U \abs{B_\Omega(z,\Phi_j(z_0))}^2\cdot\abs{\det \Phi_j'(z_0)}^2 dV(z)\\
 &= \frac{1}{B_\Omega(z_0,z_0)} \int_U \abs{B_\Omega(\Phi_j(w),\Phi_j(z_0))}^2
 \cdot\abs{\det \Phi_j'(w)}^2\cdot\abs{\det \Phi_j'(z_0)}^2 dV(w)\\
 &=\frac{1}{B_\Omega(z_0,z_0)} \int_U \abs{\ol{\det \Phi_j'(w)}
 	\cdot B_\Omega(\Phi_j(w),\Phi_j(z_0))\cdot\det \Phi_j'(z_0)}^2 dV(w)\\
 &=\frac{1}{B_\Omega(z_0,z_0)}\int_U \abs{B_\Omega(w,z_0)}^2 dV(w)\\
 &= C^2,
 \end{align*}
where $C>0$ is independent of $j$, and during the course of the computation we 
have used twice the transformation formula for the Bergman kernel under a 
biholomorphic map. Therefore, the $L^2$-limit $g$ of the sequence $f_j$ also satisfies
$\norm{g}_{A^2(U)}= C$. On the other hand, we claim that $f_j\to 0$ weakly as 
$j\to \infty$, which contradicts the  fact that $f_j$ tends to a nonzero limit in the $
L^2$-norm. This contradiction shows that $R_U$ and therefore $T_U$ is not compact. 

 To complete the proof we justify the claim that $f_j\to 0$ weakly as $j\to \infty$.  
 Since $\D$ is smoothly bounded and pseudoconvex by a result of Catlin 
 \cite[Theorem 3.2.1]{Catlin80}, $A^{\infty}(\Dc)$, the space of functions 
 holomorphic on $\D$ and smooth up to the boundary, is dense in $A^2(\D)$.  
 Fix $h\in A^2(\Omega)$, and let $\ep>0$ be given. Then there exists 
 $h_{\delta}\in A^{\infty}(\Dc)$ such that $\|h-h_{\delta}\|<\ep$. Then  
 \[|\langle h,f_j\rangle| \leq |\langle h-h_{\delta},f_j\rangle|+|\langle 
	h_{\delta},f_j\rangle|\leq \|h-h_{\delta}\|+|\langle h_{\delta},f_j\rangle|
	<\ep+|\langle h_{\delta},f_j\rangle|\]
However, we note that 
$\langle h_{\delta},f_j\rangle
=h_{\delta}(\Phi_j(z_0))/\sqrt{B_\Omega(\Phi_j(z_0),\Phi_j(z_0))}\to 0$ 
as $j\to \infty$ because $B_\Omega(z,z)\to \infty$ as $z\to q\in \partial\Omega$ 
(see \cite[Theorem 6.1.17]{JarnickiPflugBook} and \cite{Pflug75}) and $h_{\delta}$ 
is bounded. Since $\ep$ was arbitrary we conclude that 
$\lim_{z\to q}\langle h,f_j \rangle= 0$ for any $h\in A^2(\D)$. That is, $f_j\to 0$ 
weakly as $j\to \infty$ and the proof of Theorem \ref{thm-symmetry} is complete.
\end{proof}

\section{Membership in Schatten classes}\label{sec-schatten}
In this section we consider the problem of computing the Schatten $p$-norms 
of $R_U$ and $T_U$, provided we know the Bergman kernel 
$B_\Omega:\Omega\times\Omega\to \cx$  of the domain $\Omega$. 
Let us define the functions 
$B^{(p)}_{U,\Omega}:\Omega\times\Omega\to \cx$ for all $p\geq 1$, 
by setting $B^{(1)}_{U,\Omega}= B_\Omega,$ the Bergman kernel 
of $\Omega$, and (since $T_U$ is a positive operator)
\begin{equation}\label{eq-bp}
	B^{(p)}_{U,\Omega}(.,z)=T^{p-1}_UB_{\Omega}(.,z)
\end{equation}
 for $p>1$ and $z\in \D$. We note that 
 $B^{(p)}_{U,\Omega}(.,z) \in A^2(\D)$ for all $p\geq 1$ and $z\in \D$.  
 We have the following formula to compute the Schatten norms of 
 $R_U$ and $T_U$. 
\begin{proposition}\label{prop-schatten}
	Let $\Omega$ be a  domain in $\cx^n$ with nontrivial Bergman space, 
	$U$ be a non-empty open subset of $\Omega$, and $p>0$. 
	Then $R_U$ is a Schatten $p$-class operator if and only if 
	$\int_{\D}B^{(2p+1)}_{U,\Omega}(z,z)dV(z)<\infty$. Furthermore,    
\begin{align}\label{eq-pnorm}
\|R_U\|_{S_p}=\|T_U\|^{1/2}_{S_{2p}}
=\left(\int_{\D}B^{(2p+1)}_{U,\Omega}(z,z)dV(z)\right)^{1/4p}.
\end{align}
\end{proposition}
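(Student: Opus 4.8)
The plan is to reduce \eqref{eq-pnorm} to a single trace identity and then prove that identity through the reproducing-kernel expansion of the trace. Throughout, powers $T_U^p$ and $T_U^{2p}$ are understood via the bounded functional calculus for the positive operator $T_U$, so that no compactness is presumed a priori and every quantity below is interpreted as an element of $[0,\infty]$.

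\textbf{Reduction.} By the relation $\norm{R_U}_{S_p}^2=\norm{T_U}_{S_{2p}}$ recorded in Section~\ref{sec-results}, the operator $R_U$ lies in $S_p$ if and only if $T_U$ lies in $S_{2p}$, and in that case $\norm{R_U}_{S_p}=\norm{T_U}_{S_{2p}}^{1/2}$. Since $T_U$ is positive, its Schatten norm satisfies $\norm{T_U}_{S_{2p}}^{2p}=\operatorname{tr}(T_U^{2p})$, where for a positive operator $A$ we set $\operatorname{tr}(A)=\sum_m\ipr{Ae_m,e_m}_\Omega\in[0,\infty]$ computed against any orthonormal basis $\{e_m\}$ of $A^2(\Omega)$ (this is basis-independent for $A\ge 0$; moreover $\operatorname{tr}(T_U^{2p})<\infty$ forces $T_U$ to be compact and in $S_{2p}$). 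Hence the whole proposition follows once I establish the trace identity
\[ \operatorname{tr}(T_U^{2p})=\int_\Omega B^{(2p+1)}_{U,\Omega}(z,z)\,dV(z). \]

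\textbf{A reproducing-kernel trace formula.} Let $k_z=B_\Omega(\cdot,z)\in A^2(\Omega)$ be the Bergman kernel at $z$, so that $g(z)=\ipr{g,k_z}_\Omega$ for every $g\in A^2(\Omega)$. I will first prove that for any positive bounded operator $A$ on $A^2(\Omega)$ one has $\operatorname{tr}(A)=\int_\Omega \ipr{Ak_z,k_z}_\Omega\,dV(z)$. Writing $A=(A^{1/2})^2$ with $A^{1/2}$ self-adjoint and using the reproducing property, one gets $\ipr{Ak_z,k_z}_\Omega=\norm{A^{1/2}k_z}_\Omega^2=\sum_m\abs{(A^{1/2}e_m)(z)}^2$, where the last equality uses $\ipr{A^{1/2}k_z,e_m}_\Omega=\ipr{k_z,A^{1/2}e_m}_\Omega=\ol{(A^{1/2}e_m)(z)}$. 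Every summand is non-negative, so Tonelli's theorem permits termwise integration:
\[ \int_\Omega \ipr{Ak_z,k_z}_\Omega\,dV(z)=\sum_m\int_\Omega\abs{(A^{1/2}e_m)(z)}^2\,dV(z)=\sum_m\norm{A^{1/2}e_m}_\Omega^2=\operatorname{tr}(A). \]

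\textbf{Conclusion.} By the definition \eqref{eq-bp} we have $B^{(2p+1)}_{U,\Omega}(\cdot,z)=T_U^{2p}k_z$, hence $B^{(2p+1)}_{U,\Omega}(z,z)=(T_U^{2p}k_z)(z)=\ipr{T_U^{2p}k_z,k_z}_\Omega$. Applying the trace formula with $A=T_U^{2p}$ yields the trace identity of the Reduction step, giving simultaneously the membership criterion $R_U\in S_p\iff\int_\Omega B^{(2p+1)}_{U,\Omega}(z,z)\,dV(z)<\infty$ and, when the integral is finite, $\norm{T_U}_{S_{2p}}^{2p}=\int_\Omega B^{(2p+1)}_{U,\Omega}(z,z)\,dV(z)$. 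Combining this with $\norm{R_U}_{S_p}=\norm{T_U}_{S_{2p}}^{1/2}$ from the Reduction step produces exactly \eqref{eq-pnorm}. The only genuine technical point is the termwise integration, and it is handled cleanly by Tonelli precisely because each summand $\abs{(A^{1/2}e_m)(z)}^2$ is non-negative, so both sides land consistently in $[0,\infty]$ and no prior absolute convergence need be checked; the legitimacy of the vectors $T_U^{2p}k_z$ and $A^{1/2}e_m$ as elements of $A^2(\Omega)$ is immediate from the functional calculus, which also explains why the argument never requires $T_U$ to be compact.
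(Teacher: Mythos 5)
Your proof is correct and follows essentially the same route as the paper: reduce via $\norm{R_U}_{S_p}^{2}=\norm{T_U}_{S_{2p}}$ and $\norm{T_U}_{S_{2p}}^{2p}=\norm{T_U^{2p}}_{S_1}$ to the trace of the positive operator $T_U^{2p}$, then identify $\ipr{T_U^{2p}B_\Omega(\cdot,z),B_\Omega(\cdot,z)}_\Omega$ with $B^{(2p+1)}_{U,\Omega}(z,z)$ and integrate. The only difference is that where the paper cites \cite[Lemma 1.25 and the proof of Theorem 6.4]{zhu_book_2007} for the trace formula $\operatorname{tr}(A)=\int_\Omega\ipr{Ak_z,k_z}_\Omega\,dV(z)$, you supply a self-contained and valid proof of it via $A^{1/2}$, Parseval, and Tonelli.
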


\begin{proof}[Proof of Proposition \ref{prop-schatten}]
Since $T_U$ is a positive operator $T_U^p$ is a positive positive operator 
on $A^2(\D)$ for every positive real  number $p$. 
Then for $z\in \D$ and $p>0$ we have 
\begin{align}\label{EqnSchatten}
0\leq \langle T^p_UB_{\D}(.,z),B_{\D}(.,z)\rangle 
=T^p_UB_{\D}(.,z)(z)=B^{(p+1)}_{U,\Omega}(z,z).
\end{align}
It is known (cf.  the proof in \cite[Theorem 6.4]{zhu_book_2007}) 
that the Schatten 1-norm of a positive operator 
$T$ on $A^2(\Omega)$ is given by:
\[\norm{T}_{S_1}=\int_{\D} \langle TB_{\D}(.,z),B_{\D}(.,z)\rangle dV(z),\]
where the integrand is equal to the Berezin transform of the operator $T$ 
times $B_{\D}(z,z)$. Note that both sides of the above equation may be infinite.  
Using \cite[Lemma 1.25]{zhu_book_2007} to compute $\norm{R_U}_{S_p}$ 
in terms of $\|T_U^{2p}\|_{S_1}$, we have
\begin{align*}
\norm{R_U}_{S_p}^{4p} =\norm{T_U}^{2p}_{S_{2p}}
&= \norm{T_U^{2p}}_{S_1}\\
&= \int_{\D} \langle T_U^{2p}B_{\D}(.,z),B_{\D}(.,z)\rangle dV(z),\\
&= \int_\Omega B^{(2p+1)}_{U,\Omega}(z,z) dV(z),
\end{align*}
where the last line follows from \eqref{EqnSchatten}. 
\end{proof}

We end the paper by  justifying the trace formula \eqref{eq-trace}.
\begin{align*}\
\norm{T_U}_{S_1} =\int_{\D}B^{(2)}_{U,\Omega}(z,z)dV(z)
=&\int_{\D}\int_{\D}\chi_{U}(\xi)B_{\D}(\xi,z)B_{\D}(z,\xi)dV(\xi)dV(z) \\
=&\int_{\D}\int_{U}|B_\Omega(z,\xi)|^2dV(\xi)dV(z)\\
 \text{(Using Tonelli's theorem)}
=&\int_{U}\int_{\D}|B_\Omega(\xi,z)|^2dV(\xi)dV(z) \\
=&\int_U B_\Omega(\xi,\xi) dV(\xi).
\end{align*}

\providecommand{\bysame}{\leavevmode\hbox to3em{\hrulefill}\thinspace}
\providecommand{\MR}{\relax\ifhmode\unskip\space\fi MR }
\providecommand{\MRhref}[2]{%
  \href{http://www.ams.org/mathscinet-getitem?mr=#1}{#2}
}
\providecommand{\href}[2]{#2}

\end{document}